\providecommand{\osc}{\operatorname{osc}}
\providecommand{\epsilonDG}{\ensuremath{\epsilon_{\mathrm{DG}}}}
\newcommand*{\sep}{\,|\,}
\renewcommand{\rho}{\varrho}
\title{Higher Order Calder{\'o}n-Zygmund Estimates for the $p$-Laplace Equation}
\author{Anna Kh.~Balci}
\author{Lars Diening}
\author{Markus Weimar}
\begin{document}

\begin{abstract}
  The paper is concerned with higher order \Calderon{}-Zygmund
  estimates for the $p$-Laplace equation
  \begin{align*}
    -\divergence(A(\nabla u))
    &:= -\divergence{(\abs{\nabla
      u}^{p-2}\nabla u)}=-\divergence F, \qquad 1<p<\infty.
  \end{align*}
  We are able to transfer local interior Besov and Triebel-Lizorkin
  regularity up to first order derivatives from the force term~$F$ to
  the flux~$A(\nabla u)$. For $p\geq 2$ we show that
  $F \in \bfB^s_{\rho,q}$ implies $A(\nabla u) \in \bfB^s_{\rho,q}$
  for any $s \in (0,1)$ and all reasonable $\rho,q \in (0,\infty]$ in
  the planar case. The result fails for~$p<2$.  In case of higher
  dimensions and systems we have a smallness restriction on~$s$. The
  quasi-Banach case $0<\min\{\rho,q\} < 1$ is included, since it has
  important applications in the adaptive finite element analysis.  As an
  intermediate step we prove new linear decay estimates for
  $p$-harmonic functions in the plane for the full range $1<p<\infty$.
\end{abstract}

\thanks{The research of Anna Kh. Balci was partly supported by DAAD, RFBR and  the Ministry of Education and Science of the Russian Federation (projects 19-01-00184 and 1.3270.2017/4.6).}

\keywords{p-Laplacian; nonlinear elliptic equations; regularity of solutions}

\subjclass[2010]{%
35J92, 
46E35, 
65M99, 
35J60, 
35B65 
}

\maketitle

\numberwithin{theorem}{section}
\numberwithin{equation}{section}

\section{Introduction}
\label{sec:introduction}
In this paper we study \Calderon{}-Zygmund type estimates for the weak
solution of the $p$-Poisson equation
\begin{align}
  \label{eq:plap}
  -\divergence(A(\nabla u))
  &:= -\divergence{(\abs{\nabla
    u}^{p-2}\nabla u)}=-\divergence F \qquad
    \text{in $\Omega$}
\end{align}
where $d,n\in\setN$, $\Omega$ is an open set in $\setR^d$,
$1<p<\infty$, and~$u \colon \Omega\to \setR^n$ is the unknown. All our
results are of local nature so no boundary conditions are
required. Most of them are restricted to $p\geq 2$ and scalar
solutions ($n=1$) for $d=2$.

The main objective in non-linear \Calderon{}-Zygmund theory is to
transfer the regularity of the right hand side~$F$ to the
flux~$A(\nabla u)$ (or to~$\nabla u$ itself)
in the norm of an
appropriate function space~$\bfX$. The corresponding
estimate can be written as
\begin{align}
  \label{eq:Calderon-Zygmund-global}
  \norm{A(\nabla u)}_{\bfX} \le C\,\norm{F}_{\bfX},
\end{align}
or, in its local version,
\begin{align}
  \label{eq:Calderon-Zygmund}
  \norm{A(\nabla u)}_{\bfX(B)} \le C\,\norm{F}_{\bfX(2B)} +
  \texttt{lower order terms of~$A(\nabla u)$},
\end{align}
where $B$ denotes an arbitrary ball such that $2B\subset \Omega$.

The choice~$\bfX=L^{p'}$ (with $\frac{1}{p} + \frac{1}{p'}=1$)
corresponds to the standard estimates of weak solutions. The first
breakthrough was the result of~\cite{IwaMan89}, who showed the
estimate~\eqref{eq:Calderon-Zygmund-global} for~$\bfX = L^r$ and
all~$r \in [p',\infty)$. Later on this result was extended
to~$\bfX=\setBMO$ for~$p\ge 2$ in \cite{DiBMan93} and for an arbitrary
exponent~$p>1$ in \cite{DieKapSch11}. It became clear from the
calculations in~\cite{DieKapSch11} that it is better to look at the
mapping~$F \mapsto A(\nabla u)$ rather than $F \mapsto \nabla u$. This
is also supported by~\cite{KuuMin13}, where potential estimates for
the mapping~$f := \divergence F \mapsto A(\nabla u)$ have been
studied.  Moreover, it has been shown in~\cite{DieKapSch11} that it is
possible to take $\bfX= \setBMO_\omega$, or $\bfX = C^{0,\alpha}$,
resp., as long as the modulus of continuity~$\omega$,
resp.~$\alpha>0$, satisfies some smallness condition which depends on
the best known regularity of $p$-harmonic functions. In particular,
for~$d \geq 3$ or vectorial solutions the exponent~$\alpha>0$ is just
an unknown small quantity.

In this paper we extend the \Calderon-Zygmund estimates for~$p\geq 2$
and~$d=2$ to spaces of differentiability up to order one. In
particular, we show that the estimate~\eqref{eq:Calderon-Zygmund}
holds true also for Besov spaces $\bfX=\bfB^s_{\rho,q}$ for all
exponents of smoothness~$s \in (0,1)$, every integrability
parameter~$\rho \in (0,\infty]$, and all fine
indices~$q \in (0,\infty]$ such
that~$\bfB^s_{\rho,q} \embedding\embedding L^{p'}$. Moreover, if
additionally $\rho<\infty$, then a similar assertion remains valid in
the scale of Triebel-Lizorkin spaces $\bfX=\bfF^s_{\rho,q}$.  We refer
to Theorem~\ref{thm:reg-transfer} for the precise statements.  Let us
stress the fact that these scales include a lot of classical function
spaces such as, e.g., H\"older-Zygmund, Bessel-potential, or
Sobolev-Slobodeckij spaces, as special cases \cite{RunSic96}.  The
restriction~$p\geq 2$ in our result is natural in this context, since
the assertion fails for~$1<p<2$ even for~$F\equiv 0$ and~$d=2$, see
Subsection~\ref{ssec:duality}. The assumed compact embedding in
$L^{p'}$ ensures that we are in the context of weak solutions, i.e.,
that $u \in W^{1,p}_{\loc}(\Omega)$.  In the case~$d \geq 3$ we obtain
similar results, but then there are restrictions on~$s$ due to some
open problems (see Subsection~\ref{ssec:open-problem}) on the
regularity of $p$-harmonic functions in higher dimensions.

Our work is motivated by the numerical analysis of the $p$-Poisson
equation using wavelets or the adaptive finite element method. Note
that the approximability of the solutions by discrete ones is
determined solely by the differentiability~$s$
from~$\bfB^s_{\rho,q}$. We refer to~\cite{CohDahDeV01,DeV98} for a
detailed study of numerical approximability. However, in many cases it
is possible to increase~$s$ by decreasing the integrability~$\rho$,
where the strongest results are obtained if we take $\rho<1$.  Then we
are in the regime of quasi-Banach spaces, but nevertheless also in
this case the smoothness~$s$ still determines the rates of
convergence of best $N$-term approximations.
For this reason it is important that our
estimates cover the full range of parameters $\rho,q\in(0,\infty]$.

Other authors also investigated estimates for~$A(\nabla u)$ in
terms of Sobolev or Besov spaces. For example, Cianchi and Maz'ya have
shown in~\cite{CiaMaz18} that $F \in W^{1,2}$, so $f :=
\divergence F \in L^2$, implies that~$A(\nabla u) \in W^{1,2}$
for any~$d \geq 2$ and any~$1<p<\infty$. They also obtain global
results under minimal conditions on~$\partial \Omega$. Moreover, it
has been shown by Avelin, Kuusi, and Mingione \cite{AveKuuMin18}
that~$f \in L^1$ implies that locally $A(\nabla u) \in W^{s,1}$
for any $s \in (0,1)$, $d\geq 2$ and $p > 2 - \frac
1d$. For~$p\leq d$ this requires the concept of so-called \emph{solutions obtained as limits of approximations} (SOLA). 
Both results support the fact that the mappings
$F \mapsto A(\nabla u)$ and $f \mapsto A(\nabla u)$ are the natural
ones. Our regularity results differ from~\cite{CiaMaz18}
and~\cite{AveKuuMin18} in the sense that we provide estimates for all
integrability exponents~$\rho$ (from $\bfB^s_{\rho,q}$ or
$\bfF^s_{\rho,q}$), while~\cite{CiaMaz18} is restricted to~$\rho=2$
and~\cite{AveKuuMin18} is restricted to~$\rho=1$. Let us mention again
that estimates for arbitrary exponents~$\rho$ are only possible
for~$p \geq 2$, see Subsection~\ref{ssec:duality}.

In Subsection~\ref{ssec:transfer-nabla-u} we discuss how our results
on the regularity of~$A(\nabla u)$ translate into regularity
assertions for~$\nabla u$ and $V(\nabla u)=\abs{\nabla u}^{\frac{p-2}{2}} \nabla u$. This allows us also to
compare our results with the work of other authors on the higher
differentiability of these quantities.  For example, it has been shown
in~\cite{DahDieHarSchWei16} that for $d=2$ and~$f \in L^\infty$ there
holds $u \in \bfB^{s}_{\rho,\rho}$ for all $s \in (0,2)$ if $p \leq 2$
and all $s \in (0,p')$ if $p > 2$, where in both cases
$\rho=\rho(d,s,p) > 0$ refers to the adaptivity scale of $L^p$, i.e.,
$s-\frac{d}{\rho}=-\frac{d}{p}$.  These results also hold globally on
Lipschitz domains with zero boundary data. Corner regularity results
with strong conditions on the right-hand side for~$d=2$ have been
studied in \cite{HarWei18}.  The $C^{0,\alpha}$-regularity
of~$A(\nabla u)$ up to the boundary for smooth domains has been
studied in~\cite{BreCiaDieKuuSch17b}, however it rules out the case of
polygonal domains that appear in the context of finite elements.
Moreover, is has been shown in~\cite{CloGioPas17} that for $p\geq 2$,
$d \geq 2$ and $s \in (0,1)$ a forcing term~$F \in \bfB^s_{2,q}$
implies that
$V(\nabla u) \in
\bfB^{\frac{p' s}{2}}_{2,\frac{2q}{p'}}$ for
$1 \leq q \leq \frac{2 d}{d-2s}$. For a more detailed comparison of
our results to that from~\cite{DahDieHarSchWei16} and
\cite{CloGioPas17} we refer to Subsection~\ref{ssec:transfer-nabla-u}.

The main idea of our proof is to employ a well-known characterization
of Besov and Triebel-Lizorkin spaces in terms of oscillations, see
Lemma~\ref{lem:characterization}. This allows to reduce the proof
of~\eqref{eq:Calderon-Zygmund} to an oscillation decay estimate for
$A(\nabla u)$. This fundamental decay estimate is formulated in
Theorem~\ref{thm:osc-estimates}. It is of independent interest since
it allows to significantly improve the decay estimate from
\cite{BreCiaDieKuuSch17} at least in the case of the plane.

Certainly, the oscillation estimates for~$A(\nabla u)$ can never be
better than the ones for $p$-harmonic functions, i.e., for~$h$ with
$\divergence(A(\nabla h))=0$, which corresponds to the case~$F\equiv 0$.  In
two dimensions we are able to prove a new (almost) linear decay estimate for the 
oscillations of~$A(\nabla h)$ for~$p \geq 2$. Indeed, in
Theorem~\ref{thm:h-decay} we show that
\begin{align*}
  \dashint_{\theta B} \abs{A(\nabla h) - \mean{A(\nabla h)}_{\theta
  B}} \,dx
  &\leq c_\beta\, \theta^\beta
    \dashint_{B} \abs{A(\nabla h) - \mean{A(\nabla h)}_{
    B}} \,dx
\end{align*}
for any~$\theta,\beta \in (0,1)$. From this we deduce by duality decay
estimates for $\nabla h$ in the case~$1 < p \leq 2$, see
Theorem~\ref{thm:h-decay-grad}.  It has been shown by Iwaniec and Manfredi in  \cite{IwaMan89}
that~$A(\nabla h) \in C^1$ for~$p\geq 2$ while $\nabla h \in C^1$ if
$p \leq 2$. However, the techniques therein do not provide qualitative decay estimates. Instead, we use and improve the approach of~\cite{AraTeiUrb17} and~\cite{BaeKov05}, which allows us to
obtain new decay estimates. We also implement several ideas
from~\cite{BreCiaDieKuuSch17}.
    
The paper is organized as follows: In Section~\ref{sec:regul-p-harm}
we study the regularity of~$p$-harmonic functions in the plane. Here we
deduce the important decay estimates for $A(\nabla h)$ that we shall need
later. Starting from Section~\ref{sec:oscill-estim} we study the
$p$-Poisson equation with a force term~$\divergence F$. We derive in
this section the crucial oscillation estimates of~$A(\nabla u)$ in
terms of the oscillations of~$F$. In
Section~\ref{sec:regularity-transfer} we prove the nonlinear
\Calderon-Zygmund{} estimates that allow to transfer
$\bfB^{s}_{\rho,q}$, resp. $\bfF^s_{\rho,q}$ regularity from~$F$ to
$A(\nabla u)$. Here we also explain how the regularity of~$A(\nabla u)$ implies regularity of~$\nabla u$ and $V(\nabla u)$.  Throughout the paper we
assume~$p\geq 2$. Only in Subsection~\ref{ssec:duality} we deal with
the case~$1< p<2$ and present a new decay estimate.

\section{\texorpdfstring{Regularity of $p$-harmonic Functions}{Regularity of p-harmonic functions}}
\label{sec:regul-p-harm}
Regularity studies of solution to the  problem~\eqref{eq:pharmonic} are about 50 years
old. They go back to Ural'tseva \cite{Ura68}, where it was shown
that~$p$-harmonic functions belong to the local H\"older
class~$C_{\loc}^{1,\alpha}(\Omega)$ for some
exponent~$\alpha=\alpha(d,p)<1$. For the case~$d=2$ the sharp value of
the H\"older exponent~$\alpha$ is known, see \cite{IwaMan89}, while for $d\geq 3$ this problem is still open.

Before we proceed let us first introduce some notation.  For vectors $Q$ we define~$A$
and $V$ in the following way:
\begin{align*}
  A(Q) &=|Q|^{p-2}Q, 
  \\
  V(Q) &=|Q|^{\frac{p-2}{2}}Q,
\end{align*}
where $\abs{\cdot}$ denotes the Euclidean norm.
Note that~$A$ and $V$ are isomorphisms. Moreover, by~$B$, $B_r$, and
$B_r(x)$ we usually denote open Euclidean balls with radius~$r>0$ and center~$x\in \Omega\subset\setR^d$. We write $\lambda B$ for the ball with same center as~$B$ but scaled in
size by~$\lambda$.
Further, for~$f\in L^1_{\loc} (\setR^d)$ we define the mean value over the ball $B$ as
\begin{align*}
	\mean{f}_B:= \dashint_B f\, dx,
\end{align*}
where $\dashint_B \dotsm \,dx := \abs{B}^{-1} \int_B \dotsm \,dx$ denotes the average integral with $\abs{B}$ being the volume of $B$. The same notation is employed also in the vector-valued case.
Moreover, we shall use $c$ as a generic positive constant which may
change from line to line, but does not depend on the crucial
quantities. We will use the notation $f \lesssim g$ if there exist a
constant such that $f \leq c\, g$. Finally, we write $f \eqsim g$ if
$f \lesssim g$ and $g \lesssim f$.

\begin{definition}
  A function $h \colon \Omega \to \setR$ is called $p$-harmonic
  in~$\Omega\subset \setR^d$ if it is a weak solution of the
  $p$-harmonic equation, i.e., $h \in W^{1,p}_{\loc}(\Omega)$ and
  \begin{align}
    \label{eq:pharmonic}
    -\divergence(A(\nabla h)) &= 0
  \end{align}
  in the distributional sense.
\end{definition}
Throughout the paper we will use the letters~$h$ for $p$-harmonic functions
and~$u$ for solutions to the $p$-Poisson equation \eqref{eq:plap}.


The main result of this  section is the following decay estimate for~$A(\nabla h)$.
\begin{theorem}
  \label{thm:h-decay}
  Let $h\colon \Omega \to \setR$ be $p$-harmonic with $p \geq 2$
  on~$\Omega \subset \setR^2$. Then for all $\beta \in (0,1)$, there
  exists~$c_\beta>0$ such that for all balls~$B \subset \Omega$ and
  all $\theta \in (0,1)$ there holds
  \begin{align*}
    \dashint_{\theta B} \abs{A(\nabla h) - \mean{A(\nabla h)}_{\theta
    B}} \,dx
    &\leq c_\beta\, \theta^\beta
      \dashint_{B} \abs{A(\nabla h) - \mean{A(\nabla h)}_{
      B}} \,dx.
  \end{align*}
\end{theorem}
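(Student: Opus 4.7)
The plan is to prove Theorem~\ref{thm:h-decay} in three stages: a planar duality recasting the claim for $A(\nabla h)$ as one for $\nabla v$ with $v$ $p'$-harmonic, a Campanato-type iteration from a one-step oscillation contraction, and a compactness argument establishing that contraction.

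\emph{Planar duality.} The identity $\mathrm{div}\, A(\nabla h) = 0$ in $\setR^2$ yields a scalar stream function $v$ with $A(\nabla h) = R\nabla v$, where $R$ is the rotation by $\pi/2$. Since $A^{-1}$ coincides with the flux $A'(\zeta) = \abs{\zeta}^{p'-2}\zeta$ of the conjugate exponent $p' = p/(p-1) \in (1,2]$, the curl-free condition $\mathrm{curl}\,\nabla h = 0$ becomes $\mathrm{div}\, A'(\nabla v) = 0$; that is, $v$ is $p'$-harmonic. Because $R$ preserves the Euclidean norm, the assertion of the theorem is equivalent to the same oscillation decay for $\nabla v$ in the dual range $p'\in(1,2]$. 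The boundary case $p=2=p'$ is classical because $v$ is then harmonic, so the interesting range is $p'<2$.

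\emph{Iteration from a one-step contraction.} It suffices to prove: for every $\beta \in (0,1)$ there exists $\theta_0 = \theta_0(\beta) \in (0,1)$ such that every planar $p'$-harmonic $v$ and every ball $B$ satisfy
\begin{align*}
    \dashint_{\theta_0 B} \bigl|\nabla v - \mean{\nabla v}_{\theta_0 B}\bigr|\,dx \le \theta_0^\beta \dashint_{B} \bigl|\nabla v - \mean{\nabla v}_{B}\bigr|\,dx.
\end{align*}
Iterating on nested balls $\theta_0^k B$ and handling intermediate scales $\theta\in[\theta_0^{k+1},\theta_0^k]$ by the elementary volume comparison between neighboring dyadic levels then yields the theorem with $c_\beta \sim \theta_0(\beta)^{-\beta}$.

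\emph{The one-step by compactness.} Argue by contradiction. If the one-step fails for some $\beta$, extract $\theta_k \downarrow 0$ and $p'$-harmonic $v_k$ on $B_1$ with $\dashint_{B_1}\abs{\nabla v_k - \mean{\nabla v_k}_{B_1}} = 1$ yet $\dashint_{\theta_k B_1}\abs{\nabla v_k - \mean{\nabla v_k}_{\theta_k B_1}} > \theta_k^{\beta}$. Ural'tseva's universal $C^{1,\alpha_0}_{\mathrm{loc}}$-estimate provides precompactness of $\nabla v_k$ in $C^0_{\mathrm{loc}}(B_1)$, and a subsequence converges to a $p'$-harmonic limit $v_\infty$. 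Following the tangential analysis of~\cite{AraTeiUrb17}, augmented by the planar quasiregular structure exploited in~\cite{BaeKov05}, distinguish two regimes near the origin: on the non-degenerate set $\{\abs{\nabla v_\infty}>0\}$ the $p'$-Laplace equation linearizes to a uniformly elliptic one and classical Schauder/Campanato theory yields oscillation decay of exact order $\theta$; on the degenerate set $\{\nabla v_\infty = 0\}$ the improved scaling of the $p'$-Laplace operator at small $\abs{\nabla v}$ beats $\theta^\beta$. Either alternative contradicts the normalization once $k$ is large.

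The main obstacle is the non-invariance of the $p'$-Laplace equation under affine shifts: subtracting $\mean{\nabla v}_B\cdot x$ from $v$ destroys $p'$-harmonicity, so the Campanato ``freezing'' strategy of the linear theory is unavailable. Bypassing this is precisely the content of the Ara\'ujo-Teixeira-Urbano tangential analysis, which, combined with the two-dimensional planar duality, lifts $\nabla v$ from its a priori merely H\"older regularity to almost-linear oscillation decay valid for every $\beta<1$.
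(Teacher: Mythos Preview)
The duality reduction and the iteration from a one-step contraction are fine (the paper uses exactly this duality, in the reverse direction, to derive Theorem~\ref{thm:h-decay-grad} from Theorem~\ref{thm:h-decay}). The genuine gap is in your compactness step.

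Your normalization $\dashint_{B_1}\abs{\nabla v_k-\mean{\nabla v_k}_{B_1}}=1$ does \emph{not} bound $\norm{\nabla v_k}_{L^{p'}(B_1)}$, which is what Ural'tseva's $C^{1,\alpha_0}$ estimate needs. If $\abs{\mean{\nabla v_k}_{B_1}}\to\infty$ there is no precompactness of $\nabla v_k$ in $C^0_{\loc}$; you would have to rescale, but as you correctly note, subtracting a linear function destroys $p'$-harmonicity. You declare that this obstacle is ``precisely the content'' of the Ara\'ujo--Teixeira--Urbano analysis, but that analysis concerns $p$-harmonic functions with $p\ge 2$ and yields $\nabla h\in C^{0,\alpha}$ with $\alpha>\tfrac{1}{p-1}$; it does not deliver almost-linear decay of $\nabla v$ for $p'\le 2$ directly. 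Your sketch of the limit dichotomy is also problematic: in the ``degenerate'' alternative $\nabla v_\infty(0)=0$ you invoke an ``improved scaling of the $p'$-Laplace operator at small $\abs{\nabla v}$,'' but for $p'<2$ the operator is singular there, and the decay of $\nabla v$-oscillations at a critical point is exactly the regularity you are trying to establish---so the argument is circular.

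The paper avoids all of this by staying on the $p\ge 2$ side and working with $A(\nabla h)$ directly. It splits each fixed ball (no limits, no compactness) into a non-degenerate regime, where $V(\nabla h)$ is close to its mean and a linearization gives almost-linear decay (Proposition~\ref{pro:decay-A-nondeg}), and a degenerate regime, where the quantitative H\"older exponent $\alpha(p)>\tfrac{1}{p-1}$ from \cite{AraTeiUrb17}/\cite{BaeKov05} (Lemma~\ref{lem:teixeira}) is exactly strong enough so that $\abs{\nabla h}^{p}$-type terms decay faster than $\theta^{\beta p'}$ (Proposition~\ref{pro:deg-alternatives}). The inequality $\alpha(p)(p-1)>1$ is the whole point here, and it has no analogue in a compactness scheme that only passes to a single limit.
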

In Proposition~\ref{pro:h-decay-pprime} below we present a
corresponding estimate with power~$p'$ on the left-hand side.
 
The proof of Theorem~\ref{thm:h-decay} requires a few preliminary steps. The basic
idea is to distinguish between the \emph{non-degenerate} and the
\emph{degenerate} case. The non-degenerate case is the one where
\begin{align*}
  \dashint_B \abs{V(\nabla h) - \mean{V(\nabla h)}_B}^2\,dx &\leq \epsilonDG
  \dashint_B \abs{V(\nabla h)}^2\,dx
\end{align*}
for a suitable small~$\epsilonDG>0$.  In particular, $V(\nabla h)$ is
(in average) close to the constant~$\mean{V(\nabla h)}_B$,
so~$\nabla h$ is also close to a constant. In this case
$A(\nabla h) \approx \mean{\nabla h}_B^{p-2} \nabla h$, so the
equation behaves locally like a linear equation with constant
coefficients and we get our decay estimates from this. See Subsection~\ref{ssec:non-degenerate-case-h} for details. In contrast, for the degenerate case we have to argue differently. In this situation
we will use certain decay estimates of quasi-conformal gradient maps which also explain the restriction to $d=2$, see Subsection~\ref{ssec:degenerate-case-h}.

Most of our results are restricted to the case~$p\ge2$. However, 
the following remarkable decay estimate for the case ~$1 < p \leq 2$
is obtained in Subsection~\ref{ssec:duality} by a duality argument.
\begin{theorem}
	\label{thm:h-decay-grad}
	Let $h\colon \Omega \to \setR$ be $p$-harmonic with $1<p\leq 2$
	in~$\Omega \subset \setR^2$. Then for all $\beta \in (0,1)$, there
	exists~$c_\beta>0$ such that for all balls~$B \subset \Omega$ and
	all $\theta \in (0,1)$ there holds
	\begin{align*}
		\dashint_{\theta B} \abs{\nabla h - \mean{\nabla h}_{\theta
				B}} \,dx
		&\leq c_\beta\, \theta^\beta
		\dashint_{B} \abs{\nabla h - \mean{\nabla h}_{
				B}} \,dx.
	\end{align*}
\end{theorem}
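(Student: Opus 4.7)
The plan is to exploit the classical planar duality between $p$-harmonic and $p'$-harmonic functions. Since $1<p\le 2$ is equivalent to $p'\ge 2$, the decay estimate of Theorem~\ref{thm:h-decay} is available for $p'$-harmonic functions. I would construct a Hodge dual $\tilde h$ of $h$ that is $p'$-harmonic, apply Theorem~\ref{thm:h-decay} to $\tilde h$, and then read off the desired estimate for $\nabla h$.

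Fix a ball $B\subset \Omega$, which is simply connected. Since $\divergence(A(\nabla h))=0$ on~$B$, Poincar\'e's lemma in $\setR^2$ produces $\tilde h\colon B\to\setR$ with
\begin{align*}
  \nabla \tilde h = R\,A(\nabla h),
\end{align*}
where $R$ denotes the rotation $R(q_1,q_2)=(-q_2,q_1)$. In particular $\abs{\nabla \tilde h}=\abs{\nabla h}^{p-1}$, so $\tilde h\in W^{1,p'}_{\loc}(B)$. Writing $A'(Q):=\abs{Q}^{p'-2}Q$ and using the elementary identity $(p-1)(p'-2)+(p-2)=0$ (which follows from $p'=p/(p-1)$), a short computation yields
\begin{align*}
  A'(\nabla \tilde h) = \abs{\nabla h}^{(p-1)(p'-2)}\,R\,\abs{\nabla h}^{p-2}\nabla h = R\,\nabla h.
\end{align*}
Since the rotated gradient field $R\,\nabla h=(-\partial_2 h,\partial_1 h)$ is divergence-free in the distributional sense (this is just the symmetry of mixed partials tested against smooth functions), we obtain $\divergence(A'(\nabla \tilde h))=0$ weakly, i.e.\ $\tilde h$ is $p'$-harmonic.

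It then remains to apply Theorem~\ref{thm:h-decay} to $\tilde h$ with exponent $p'\ge 2$ on the ball~$B$, which delivers for every $\beta,\theta\in(0,1)$
\begin{align*}
  \dashint_{\theta B}\abs{A'(\nabla \tilde h)-\mean{A'(\nabla \tilde h)}_{\theta B}}\,dx \le c_\beta\,\theta^\beta \dashint_{B}\abs{A'(\nabla \tilde h)-\mean{A'(\nabla \tilde h)}_{B}}\,dx.
\end{align*}
Because $R$ is a linear isometry that commutes with the mean-value operator, the integrands coincide with $\abs{\nabla h-\mean{\nabla h}_{\cdot}}$, so the claimed bound follows at once. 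I expect the main obstacle to be the rigorous justification of the stream-function construction at the Sobolev level, namely the definition of $\tilde h$ and the interpretation of $A'(\nabla \tilde h)=R\nabla h$ in the distributional sense; this, however, is classical in the plane once one invokes Ural'tseva's $C^{1,\alpha}_{\loc}$-regularity of~$h$, which renders $A(\nabla h)$ a continuous divergence-free field on the simply connected ball~$B$.
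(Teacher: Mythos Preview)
Your proof is correct and follows essentially the same approach as the paper: both arguments construct the $p'$-harmonic conjugate via the planar Hodge duality and then invoke Theorem~\ref{thm:h-decay}. The only difference is notational---the paper phrases the construction in the language of differential forms (writing $*\tau = A(\omega)$ and citing Hamburger), whereas you work with the explicit rotation~$R$ and the stream-function construction, which amounts to the same thing.
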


\begin{remark}
	\label{rem:h-decay-Rd}
	The Theorems~\ref{thm:h-decay} and \ref{thm:h-decay-grad}
	improve the decay results from \cite[Remark~5.6]{DieKapSch11}
	significantly in the situation of the plane. Indeed, the result
	in~\cite{DieKapSch11} is restricted to~$\beta \in (0,\beta_0)$,
	where~$\beta_0>0$ is some unknown small number.
\end{remark}

\subsection{Shifted Orlicz functions and monotonicity}
\label{ssec:shift-orlicz-funct}

In this subsection we introduce shifted N-functions and present some
monotonicity estimates.

For $1<p<\infty$ 
we define
$\phi\colon [0,\infty) \to [0,\infty)$ by
\begin{align*}
  \phi(t) &:= \tfrac{1}{p} t^p.
\end{align*}
Then $\phi$ is a so-called N-function, i.e. there exists a derivative~$\phi'$
of~$\phi$ which is right continuous, non-decreasing, and satisfies
$\phi'(0)=0$, as well as $\phi'(t)>0$ for $t>0$. In particular,~$\phi$ is
convex.  By~$\phi^*$ we denote the complementary N-function, i.e.,
\begin{align*}
  \phi^*(t) &:=\sup_{s\ge 0}(st-\phi(s)).
\end{align*}
Especially, in our setting, there holds $\phi^*(t) = \frac{1}{p'}t^{p'}$ with
$\frac 1p + \frac 1{p'} = 1$.

We will use a technique based on the properties of \emph{shifted}
N-functions, introduced in~\cite{DieEtt08,DieKre08}:
\begin{definition}
  For~$a,t\ge 0$ we define the shifted~$N$-functions $\phi_a$ as
  \begin{align*}
    \phi_a(t) := \int_0^t \frac{\phi'(\max \set{a,s})}{\max \set{a,s}} s \, ds.
  \end{align*}
\end{definition}
\begin{remark}
  We use here the version of~\cite{DieForWan17} that is equivalent to
  the original one, where $\max\set{a,s}$ is replaced by $a+s$. This
  new version however has a few simple advantages, e.g.,
  $(\phi_a)^* = \phi^*_{\phi'(a)}$ instead of
  $(\phi_a)^* \eqsim \phi^*_{\phi'(a)}$. Note that in our notation
  the~$*$ binds stronger than the shift index. That is, we let
  $\phi^*_a := (\phi^*)_a$.
\end{remark}

Choosing $\phi$ as above, we have the following equivalences for~$a,t \geq 0$
\begin{align*}
  \phi_a(t) &\eqsim (a+t)^{p-2}t^2,
  \\
  \phi_a'(t) &\eqsim (a+t)^{p-2} t,
  \\
  (\phi_a)^*(t) = \phi^*_{\phi'(a)}(t) &\eqsim (a^{p-1}+t)^{p'-2}t^2.
\end{align*}
It is important to observe, that the family $\set{\phi_a}_a$ satisfies
a uniform~$\Delta_2$ and $\nabla_2$-condition, i.e., uniformly
in~$a,t \geq 0$ there holds $\phi_a(2t) \eqsim \phi_a(t)$ and
$\phi^*_a(2t) \eqsim \phi^*_a(t)$. This implies that Young's
inequality holds independently of the shift, i.e. for every~$\delta>0$
there exists~$c_\delta$ such that for all $s,t,a \geq 0$ there holds
\begin{align*}
  \phi_a'(s)\,t \le \delta \phi_a(s)+c_\delta\phi_a(t).
\end{align*}

In the following auxiliary statements we recall some well-known
connections between~$A$, $V$, and shifted~N-functions. For the proofs
we refer to \cite[Lemma~3]{DieEtt08}, \cite[Appendix]{DieKre08}
and~\cite{DieForWan17}. Here and in what follows $\cdot$ denotes the
  Euclidean scalar product.
\begin{lemma}[Monotonicity]
  \label{lem:hammer}
  For $1<p<\infty$ we have
  \begin{alignat*}{3}
    (A(P)-A(Q))\cdot (P-Q) &\eqsim |V(P)-V(Q)|^2 && \eqsim
    (|Q|+|P|)^{p-2}|Q-P|^2
    \\
    &\eqsim \varphi_{|Q|}(|P-Q|) &&\eqsim
    \phi^*_{\abs{A(Q)}}(|A(P)-A(Q)|),
  \end{alignat*}
as well as
  \begin{equation*}
  	\abs{A(P)-A(Q)} \eqsim \varphi_{|Q|}'(|P-Q|)
  	\qquad\text{and}\qquad 
  	A(Q)\cdot Q =|V(Q)|^2 \eqsim \varphi(|Q|).
  \end{equation*}
\end{lemma}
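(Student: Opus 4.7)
My plan is to verify the six asserted equivalences in sequence by combining the explicit forms of $A$, $V$, and $\phi$ with the asymptotic formulas for the shifts collected just above the statement. The final display is almost trivial: since $A(Q) = |Q|^{p-2}Q$, one has $A(Q)\cdot Q = |Q|^p$ and $|V(Q)|^2 = |Q|^{p-2}|Q|^2 = |Q|^p$, while $\varphi(|Q|) = \tfrac{1}{p}|Q|^p$, so all three agree up to a $p$-dependent constant.

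For the main chain I would begin with the purely algebraic step $(|P|+|Q|)^{p-2}|P-Q|^2 \eqsim \varphi_{|Q|}(|P-Q|)$: the triangle inequality gives $|P|+|Q| \eqsim |Q|+|P-Q|$, after which the recorded asymptotic $\phi_a(t) \eqsim (a+t)^{p-2}t^2$ with $a := |Q|$ and $t := |P-Q|$ closes the gap. Next, the equivalences $|V(P)-V(Q)|^2 \eqsim (|P|+|Q|)^{p-2}|P-Q|^2$, $(A(P)-A(Q))\cdot(P-Q) \eqsim (|P|+|Q|)^{p-2}|P-Q|^2$, and $|A(P)-A(Q)| \eqsim (|P|+|Q|)^{p-2}|P-Q|$ can all be read off from the fundamental theorem of calculus applied to $s \mapsto V(Q+s(P-Q))$ and $s \mapsto A(Q+s(P-Q))$, once one observes that the Jacobians $DV(Y)$ and $DA(Y)$ are uniformly elliptic with $p$-dependent constants, satisfying $|DV(Y)\xi|^2 \eqsim |Y|^{p-2}|\xi|^2$ and $DA(Y)\xi\cdot\xi \eqsim |Y|^{p-2}|\xi|^2$ for all $\xi\in\setR^d$, and that $|Q+s(P-Q)| \eqsim |P|+|Q|$ on a subset of $[0,1]$ of definite measure. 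Combining these with the first algebraic step also gives $|A(P)-A(Q)| \eqsim \varphi'_{|Q|}(|P-Q|)$.

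The remaining duality $\varphi_{|Q|}(|P-Q|) \eqsim \phi^*_{|A(Q)|}(|A(P)-A(Q)|)$ I would obtain from the Young-type identity $\phi_a(t) \eqsim \phi_a^*(\phi_a'(t))$, which is uniform in $a$ by the $\Delta_2 \cap \nabla_2$ behaviour of the family $\{\phi_a\}$. Combined with $(\phi_a)^* = \phi^*_{\phi'(a)}$ and the evaluation $\phi'(|Q|) = |Q|^{p-1} = |A(Q)|$, this becomes $\varphi_{|Q|}(|P-Q|) \eqsim \phi^*_{|A(Q)|}(\varphi'_{|Q|}(|P-Q|))$, and the established $|A(P)-A(Q)| \eqsim \varphi'_{|Q|}(|P-Q|)$ together with doubling of $\phi^*_{|A(Q)|}$ allows the argument to be swapped for $|A(P)-A(Q)|$.

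The main obstacle is the uniform ellipticity of $DV$ and $DA$ with constants independent of the direction of $Y$; a direct calculation reduces this to the positive-definiteness of $I + \lambda\, \hat Y \otimes \hat Y$ with $\hat Y = Y/|Y|$ and $\lambda = (p-2)/2$ or $p-2$, which holds with $p$-dependent constants for every $1<p<\infty$. Once this is in place, the rest of the lemma is a chain of substitutions, matching the treatments in \cite{DieEtt08,DieKre08,DieForWan17} that the authors invoke.
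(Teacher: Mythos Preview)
The paper does not give its own proof of this lemma; it merely records the statement and cites \cite[Lemma~3]{DieEtt08}, \cite[Appendix]{DieKre08}, and \cite{DieForWan17}. Your outline follows exactly the strategy used in those references---uniform ellipticity of $DA$ and $DV$ combined with the fundamental theorem of calculus for the core pointwise equivalences, the shifted-N-function asymptotics for the $\varphi_{|Q|}$ and $\varphi'_{|Q|}$ expressions, and the Young-type relation $\phi_a(t)\eqsim(\phi_a)^*(\phi_a'(t))$ together with $(\phi_a)^*=\phi^*_{\phi'(a)}$ for the dual term---so there is nothing in the paper itself to compare against, and your proposal is consistent with the cited sources.
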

Using the fact that the function $t \mapsto (\abs{Q} +t)^{p-2}$ is increasing for~$p\geq 2$,
we immediately obtain the following corollary:
\begin{corollary}
  \label{cor:hammer-pge2}
  Let $p \geq 2$, then for all vectors~$P$ and $Q$ there holds
  \begin{align*}
    \abs{P-Q}^{p'} \lesssim (A(P)-A(Q))\cdot(P-Q)
    \lesssim \abs{A(P)-A(Q)}^{p'}
  \end{align*}
  and for all~$t,a\geq 0$ we have $\phi^*_a(t) \leq c\, t^{p'}$.
\end{corollary}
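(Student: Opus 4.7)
The plan is to read off all three claims from Lemma~\ref{lem:hammer} together with the explicit formulas for the shifted N-functions listed just before it, exploiting the sign of $p-2\geq 0$ (equivalently $p'-2\leq 0$) throughout. No genuine obstacle is expected; this is precisely the content of the author's remark that the corollary follows ``immediately'' from the preceding monotonicity identities.

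For the lower bound I would begin from the equivalence
\begin{equation*}
   (A(P)-A(Q))\cdot(P-Q) \eqsim (\abs{P}+\abs{Q})^{p-2}\abs{P-Q}^{2}
\end{equation*}
in Lemma~\ref{lem:hammer}. Since $p\geq 2$, the map $t\mapsto (\abs{Q}+t)^{p-2}$ is non-decreasing, and combined with the triangle inequality $\abs{P-Q}\leq \abs{P}+\abs{Q}$ this gives $(\abs{P}+\abs{Q})^{p-2}\geq \abs{P-Q}^{p-2}$. Multiplying by $\abs{P-Q}^{2}$ yields the desired lower bound for $(A(P)-A(Q))\cdot(P-Q)$ in terms of a power of $\abs{P-Q}$.

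For the remaining two claims I would first dispense with the standalone shift estimate $\phi^*_a(t)\leq c\,t^{p'}$, since it is the cleanest. Using the equivalence $\phi^*_a(t)\eqsim (a^{p-1}+t)^{p'-2}t^{2}$ recorded just before the corollary, and noting that $p'-2\leq 0$ for $p\geq 2$, the obvious bound $a^{p-1}+t\geq t$ raised to a non-positive exponent reverses direction and gives $(a^{p-1}+t)^{p'-2}\leq t^{p'-2}$; multiplying by $t^{2}$ yields $\phi^*_a(t)\lesssim t^{p'}$. The upper bound in the displayed chain then follows at once by invoking the Lemma~\ref{lem:hammer} equivalence $(A(P)-A(Q))\cdot(P-Q)\eqsim \phi^*_{\abs{A(Q)}}\bigl(\abs{A(P)-A(Q)}\bigr)$ and applying the shift estimate with $a:=\abs{A(Q)}$ and $t:=\abs{A(P)-A(Q)}$.

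The hypothesis $p\geq 2$ enters only through the sign of the two exponents $p-2$ and $p'-2$, which is what converts the elementary comparisons $\abs{P}+\abs{Q}\geq \abs{P-Q}$ and $a^{p-1}+t\geq t$ into the one-sided estimates used above; for $1<p<2$ both comparisons would flip and the corresponding bounds would fail, which is consistent with the $p\geq 2$ restriction emphasized throughout the paper.
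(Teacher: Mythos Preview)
Your approach is exactly the paper's: the corollary is introduced as following ``immediately'' from Lemma~\ref{lem:hammer} via the monotonicity of $t\mapsto(\abs{Q}+t)^{p-2}$ for $p\geq 2$, and you carry this out in the intended way for all three claims.

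One small point worth flagging: your argument for the first inequality actually produces $(A(P)-A(Q))\cdot(P-Q)\gtrsim\abs{P-Q}^{p}$, not $\abs{P-Q}^{p'}$, since $(\abs{P}+\abs{Q})^{p-2}\abs{P-Q}^{2}\geq\abs{P-Q}^{p-2}\abs{P-Q}^{2}=\abs{P-Q}^{p}$. The printed exponent $p'$ on the left of the display is in fact a typo in the statement: the bound with $p'$ cannot hold for $p>2$, as the scaling $P=\lambda e_{1}$, $Q=0$, $\lambda\to 0$ shows (left side $\sim\lambda^{p'}$, middle $\sim\lambda^{p}$). The lower bound is never invoked elsewhere in the paper, so this is harmless, but your phrasing ``in terms of a power of $\abs{P-Q}$'' leaves it unclear whether you spotted the mismatch.
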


The next lemma is taken from \cite[Lemma~2.5]{DieKapSch11}. It is a refined version from the one in~\cite{DieKre08} and shows how to perform a ``shift-change''.
\begin{lemma}
  \label{lem:shift-change}
  Let $1<p< \infty$. Then for all vectors~$P$, $Q$ and
  every~$\lambda \in (0,1]$ it holds
  \begin{align*}
    \phi_{\abs{P}}(t) 
    	&\le c\,\lambda^{1-\max\{p',2\}}\phi_{\abs{Q}}(t)+\lambda\, |V(P)-V(Q)|^2,
    \\
    \phi^*_{\abs{A(P)}}(t) 
    	&\le c\,\lambda^{1-\max\{p,2\}}\phi^*_{\abs{A(Q)}}(t)+\lambda\, |V(P)-V(Q)|^2,
  \end{align*}
  where the constants only depend on~$p$.
\end{lemma}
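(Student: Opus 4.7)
The plan is to prove the first (primal) inequality directly and then deduce the second (dual) one from the first. Since $\phi(t) = t^p/p$ is a power and $\phi^*(t) = t^{p'}/p'$ is a power as well, the statement is structurally symmetric under the exchange $p \leftrightarrow p'$. Using the identities from Lemma~\ref{lem:hammer}, in particular $\phi^*_{|A(Q)|}(|A(P)-A(Q)|) \eqsim |V(P)-V(Q)|^2$ (so the ``quadratic'' remainder is the same on both sides), applying the first inequality with $\phi$ replaced by $\phi^*$ and with $(P,Q)$ replaced by $(A(P),A(Q))$ produces the second one. For the first inequality I will pass to the polynomial form using the equivalences $\phi_a(t) \eqsim (a+t)^{p-2}t^2$ and $|V(P)-V(Q)|^2 \eqsim (|P|+|Q|)^{p-2}|P-Q|^2$, and then consider the cases $p\ge 2$ and $1<p<2$ separately, since the sign of $p-2$ reverses the monotonicity of $a\mapsto(a+t)^{p-2}$.

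In the case $p\geq 2$, I would start from $|P|+t\leq(|Q|+t)+|P-Q|$ and the elementary bound $(a+b)^{p-2}\leq c(a^{p-2}+b^{p-2})$ to obtain
\begin{align*}
\phi_{|P|}(t) \lesssim \phi_{|Q|}(t) + |P-Q|^{p-2}\,t^2.
\end{align*}
It then remains to absorb $|P-Q|^{p-2}t^2$ into $c\lambda^{-1}(|Q|+t)^{p-2}t^2+\lambda(|P|+|Q|)^{p-2}|P-Q|^2$. I factor $|P-Q|^{p-2}t^2$ and apply Young's inequality with conjugate exponents $(p/2,p/(p-2))$ and weight $\lambda$, picking up $c\lambda^{-1}t^p\leq c\lambda^{-1}\phi_{|Q|}(t)$ on the first side (using $t\leq|Q|+t$) and $c\lambda|P-Q|^p\leq c\lambda|V(P)-V(Q)|^2$ on the second (using $|P-Q|\leq|P|+|Q|$ and $p\geq 2$).

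In the case $1<p<2$, since $(a+t)^{p-2}$ is now decreasing in $a$, I would distinguish two subcases. If $|P|\geq\tfrac12|Q|$, then $|P|+t\gtrsim|Q|+t$ and $\phi_{|P|}(t)\lesssim\phi_{|Q|}(t)$ with no $V$-term needed. If $|P|<\tfrac12|Q|$, then $|P-Q|\geq\tfrac12|Q|$ forces $|V(P)-V(Q)|^2\eqsim|Q|^p$; bounding $\phi_{|P|}(t)\lesssim t^p$ (using $|P|+t\geq t$ and $p<2$), I split along $t\leq\lambda^{1/p}|Q|$ versus $t>\lambda^{1/p}|Q|$, in the former absorbing $t^p$ into $\lambda|V(P)-V(Q)|^2$ and in the latter using $t^{p-2}\leq\lambda^{(p-2)/p}|Q|^{p-2}$ to gain $t^p\lesssim\lambda^{1-p'}(|Q|+t)^{p-2}t^2$, where the arithmetic $(p-2)/p\geq 1-p'$ for $p<2$ matches the claimed exponent $1-\max\{p',2\}=1-p'$.

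The main obstacle is the Young's inequality bookkeeping in the case $p\geq 2$: the naive choice of exponents described above produces a coefficient $\lambda^{-(p-2)/2}$ in front of $\phi_{|Q|}(t)$, which coincides with the desired $\lambda^{-1}$ only for $p\in[2,4]$. For $p>4$ I would need a more refined Young scheme that uses the sharper bound $|V(P)-V(Q)|^2\geq|Q|^{p-2}|P-Q|^2$ rather than only $|V(P)-V(Q)|^2\geq|P-Q|^p$, trading $(|P|+|Q|)^{p-4}$ against $t^2$ to keep the reciprocal $\lambda$-dependence linear as claimed. Once this delicate balance is set up, the rest of the argument is routine algebra with the equivalences listed above.
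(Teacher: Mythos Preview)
The paper does not prove this lemma; it is quoted verbatim from \cite[Lemma~2.5]{DieKapSch11}, so there is no in-paper proof to compare against.

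Your duality reduction is correct (since $(\phi_a)^*=\phi^*_{\phi'(a)}$ and $V^*(A(P))=V(P)$, the second estimate is exactly the first with $p$ replaced by $p'$), and your case $1<p<2$ is fine. But the obstacle you flag for $p>4$ is not a defect of your method: the first inequality is \emph{false} as stated for $p>4$. Take $p=6$, $Q=0$, $|P|=1$, $t=\lambda^{1/3}$. Then $\phi_{|P|}(t)=\tfrac12 t^2=\tfrac12\lambda^{2/3}$, while $\phi_{|Q|}(t)=\tfrac16 t^6=\tfrac16\lambda^{2}$ and $|V(P)-V(Q)|^2=1$, so the right-hand side is $(c/6+1)\lambda$; the ratio $\lambda^{2/3}/\lambda\to\infty$ as $\lambda\to 0$. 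In fact your Young computation giving coefficient $\lambda^{1-p/2}$ for $p\ge 2$ is sharp, not merely sufficient --- no ``refined Young scheme'' can reach $\lambda^{-1}$. By your own duality, the second inequality likewise fails for $p<4/3$.

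This does not harm the paper. The only place the $\lambda$-dependence matters is the proof of Proposition~\ref{pro:decay-u-deg}, which uses the \emph{second} inequality for $p\ge 2$; via duality this is the first inequality at exponent $p'\le 2$, precisely the regime you handle. The other use (Proposition~\ref{pro:decay-A-nondeg}) sets $\lambda=1$, where the exponent is irrelevant.
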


Moreover, we will make frequent use of the following well-known estimate.
\begin{lemma}
  \label{lem:osc-aux}
  Let $1 \leq q <\infty$. Then for all balls~$B$ and $G \in L^q(B)$, there
  holds
  \begin{align*}
    \inf_{G_0} \bigg( \dashint_B \abs{G-G_0}^q \,dx\bigg)^{\frac 1q} \leq
    \bigg(\dashint_B \abs{G-\mean{G}_B}^q \,dx\bigg)^{\frac 1q}
    \leq 2\,     \inf_{G_0} \bigg( \dashint_B \abs{G-G_0}^q
    \,dx\bigg)^{\frac 1q},
  \end{align*}
  where the infima are taken over all constants~$G_0$.
  If~$q=2$, then we have equality in the first estimate. It is
  possible to replace~$B$ by an arbitrary set of positive measure.
\end{lemma}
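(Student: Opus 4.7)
The plan is to prove the two inequalities separately and then handle the $q=2$ equality case via a Hilbert-space style orthogonality argument.

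First, the left inequality is immediate: since $\mean{G}_B$ is a particular choice of constant, the infimum over all constants $G_0$ can only be smaller, so
\begin{align*}
\inf_{G_0} \bigg(\dashint_B \abs{G - G_0}^q \,dx\bigg)^{1/q} \le \bigg(\dashint_B \abs{G - \mean{G}_B}^q \,dx\bigg)^{1/q}.
\end{align*}

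For the right inequality I would fix an arbitrary constant $G_0$ and apply Minkowski's inequality (the triangle inequality for $L^q(B, dx/\abs{B})$, valid since $q \ge 1$) to $G - \mean{G}_B = (G - G_0) + (G_0 - \mean{G}_B)$. This yields
\begin{align*}
\bigg(\dashint_B \abs{G - \mean{G}_B}^q \,dx\bigg)^{1/q}
&\le \bigg(\dashint_B \abs{G - G_0}^q \,dx\bigg)^{1/q} + \abs{G_0 - \mean{G}_B}.
\end{align*}
Writing $G_0 - \mean{G}_B = \mean{G_0 - G}_B$ and applying Jensen's inequality (again using $q \ge 1$) gives $\abs{G_0 - \mean{G}_B} \le \bigl(\dashint_B \abs{G - G_0}^q \,dx\bigr)^{1/q}$, so the right-hand side is bounded by $2\bigl(\dashint_B \abs{G - G_0}^q \,dx\bigr)^{1/q}$. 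Taking the infimum over $G_0$ yields the claim.

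For the equality assertion in the case $q=2$, I would use orthogonality in $L^2(B, dx/\abs{B})$. Expanding the square and using that $\dashint_B (G - \mean{G}_B) \,dx = 0$ by definition of the mean, the cross term vanishes and one obtains
\begin{align*}
\dashint_B \abs{G - G_0}^2 \,dx &= \dashint_B \abs{G - \mean{G}_B}^2 \,dx + \abs{\mean{G}_B - G_0}^2,
\end{align*}
which is minimized precisely at $G_0 = \mean{G}_B$. Finally, since every step uses only that $B$ carries a positive measure allowing us to form averages, the argument goes through verbatim if $B$ is replaced by any set of positive Lebesgue measure. There is no real obstacle here; the main care is simply to invoke Minkowski with $q \ge 1$ and to handle the mean via Jensen, both of which would fail in the quasi-Banach range $q<1$.
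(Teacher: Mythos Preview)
Your proof is correct and is the standard argument; the paper itself states the lemma as a ``well-known estimate'' and omits the proof entirely, so there is nothing to compare against.
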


Finally, given a gradient $\nabla v$, we define its $A$- and $V$-averages $\mean{\nabla v}_B^A$, resp.\ $\mean{\nabla v}_B^V$, on balls~$B$ by the relations
\begin{align*}
  A(\mean{\nabla v}_B^A) = \mean{A(\nabla v)}_B
  \qquad\text{and}\qquad
  V(\mean{\nabla v}_B^V) = \mean{V(\nabla v)}_B
\end{align*}
and note that both of them are well-defined, since $A(\cdot)$ and $V(\cdot)$ are isomorphisms.
Then the next result, taken from~\cite[Lemma~A.2]{DieKapSch11}, states that the mean oscillations of~$V(\nabla v)$ with respect to the different versions of averages are equivalent.
\begin{lemma}
  \label{lem:DKS6.2}
  Assume that $\nabla v \in L^p(B)$ for a given ball $B$. Then
  \begin{align*}
    \dashint_B \abs{V(\nabla v) - V(\mean{\nabla v}_B^V)}^2
    &\eqsim
      \dashint_B \abs{V(\nabla v) - V(\mean{\nabla v}_B)}^2\,dx
    \eqsim\dashint_B \abs{V(\nabla v) - V(
      \mean{\nabla v}_B^A)}^2\,dx.
  \end{align*}
\end{lemma}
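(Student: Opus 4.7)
I would use that $V(\mean{\nabla v}_B^V)=\mean{V(\nabla v)}_B$ is literally the $L^2$–mean of $V(\nabla v)$. By Lemma~\ref{lem:osc-aux} with $q=2$ (so the first inequality there is an equality, i.e.\ $V(\mean{\nabla v}_B^V)$ minimises $\dashint_B\abs{V(\nabla v)-C}^2\,dx$ over constants~$C$), one direction is free:
\begin{align*}
  \dashint_B \abs{V(\nabla v)-V(\mean{\nabla v}_B^V)}^2\,dx
  &\le \dashint_B \abs{V(\nabla v)-V(\mean{\nabla v}_B)}^2\,dx, \\
  \dashint_B \abs{V(\nabla v)-V(\mean{\nabla v}_B^V)}^2\,dx
  &\le \dashint_B \abs{V(\nabla v)-V(\mean{\nabla v}_B^A)}^2\,dx.
\end{align*}
So only the reverse inequalities require work. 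By the triangle inequality, it suffices to bound $\abs{V(\mean{\nabla v}_B^V)-V(\mean{\nabla v}_B)}^2$ and $\abs{V(\mean{\nabla v}_B^V)-V(\mean{\nabla v}_B^A)}^2$ by $\dashint_B\abs{V(\nabla v)-V(\mean{\nabla v}_B^V)}^2\,dx$.

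Write $S:=\mean{\nabla v}_B^V$, $Q:=\mean{\nabla v}_B$, $R:=\mean{\nabla v}_B^A$. For the $Q$-comparison, I would use $Q-S=\mean{\nabla v-S}_B$, Lemma~\ref{lem:hammer} in the form $\abs{V(S)-V(Q)}^2\eqsim\phi_{\abs{S}}(\abs{S-Q})$, and then Jensen's inequality applied to the convex function $\phi_{\abs S}$:
\begin{align*}
  \abs{V(S)-V(Q)}^2
  \lesssim \phi_{\abs{S}}\bigl(\abs{\mean{\nabla v-S}_B}\bigr)
  \le \dashint_B \phi_{\abs{S}}(\abs{\nabla v-S})\,dx
  \eqsim \dashint_B \abs{V(\nabla v)-V(S)}^2\,dx,
\end{align*}
where the last step is again Lemma~\ref{lem:hammer}. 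This finishes the middle equivalence.

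The $R$-comparison is structurally parallel but carried out on the conjugate side. Using the $\phi^*$-variant of Lemma~\ref{lem:hammer},
\begin{align*}
  \abs{V(S)-V(R)}^2 \eqsim \phi^*_{\abs{A(R)}}(\abs{A(S)-A(R)}),
\end{align*}
and the defining relation $A(R)=\mean{A(\nabla v)}_B$ gives $A(S)-A(R)=\mean{A(S)-A(\nabla v)}_B$, so Jensen applied to the convex $\phi^*_{\abs{A(R)}}$ yields
\begin{align*}
  \abs{V(S)-V(R)}^2 \lesssim \dashint_B \phi^*_{\abs{A(R)}}(\abs{A(\nabla v)-A(S)})\,dx.
\end{align*}
This shift is still anchored at $R$, not at $S$, so I would invoke the shift-change in Lemma~\ref{lem:shift-change} (in its $\phi^*$-version) to replace $\abs{A(R)}$ by $\abs{A(S)}$, picking up an error term of the form $\lambda\,\abs{V(R)-V(S)}^2$ that is absorbed into the left-hand side for $\lambda$ small. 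Thus
\begin{align*}
  \abs{V(S)-V(R)}^2 \lesssim \dashint_B \phi^*_{\abs{A(S)}}(\abs{A(\nabla v)-A(S)})\,dx \eqsim \dashint_B \abs{V(\nabla v)-V(S)}^2\,dx,
\end{align*}
again by Lemma~\ref{lem:hammer}. The main (and only non-routine) obstacle I expect is this last step: verifying that the absorption argument in the shift-change indeed closes, so one has to book-keep the constants from Lemmas \ref{lem:hammer} and \ref{lem:shift-change} carefully before fixing $\lambda$.
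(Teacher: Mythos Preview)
The paper does not actually prove this lemma; it merely quotes it from \cite[Lemma~A.2]{DieKapSch11}. Your argument is correct and self-contained, so in fact it supplies more than the paper does here.

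Regarding your stated concern about the absorption in the $R$-comparison: it closes without difficulty. After Jensen you have
\[
  \abs{V(S)-V(R)}^2 \le C_1 \dashint_B \phi^*_{\abs{A(R)}}\bigl(\abs{A(\nabla v)-A(S)}\bigr)\,dx,
\]
and Lemma~\ref{lem:shift-change} applied pointwise with $P=R$, $Q=S$ gives
\[
  \dashint_B \phi^*_{\abs{A(R)}}\bigl(\abs{A(\nabla v)-A(S)}\bigr)\,dx
  \le c\,\lambda^{1-\max\{p,2\}} \dashint_B \phi^*_{\abs{A(S)}}\bigl(\abs{A(\nabla v)-A(S)}\bigr)\,dx
    + \lambda\,\abs{V(R)-V(S)}^2.
\]
Since $C_1$ is a fixed constant coming only from Lemma~\ref{lem:hammer}, choosing $\lambda$ with $C_1\lambda<\tfrac12$ absorbs the error term, and one more application of Lemma~\ref{lem:hammer} converts $\phi^*_{\abs{A(S)}}(\abs{A(\nabla v)-A(S)})$ back to $\abs{V(\nabla v)-V(S)}^2$. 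No circularity and no delicate bookkeeping is required.
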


\subsection{Reverse H\"older's estimate}
\label{ssec:reverse-hold-estim}

In this subsection we show that it is possible to measure the
oscillations of~$A(\nabla h)$ with or without the power~$p'$
for~$p\geq 2$.

Let us begin with the following estimate of reverse H\"older
type.
\begin{lemma}[{\cite[Corollary~3.5]{DieKapSch11}}]
  \label{lem:rev-hoelder}
  If $h$ is $p$-harmonic with~$p\geq 2$, then for all~$Q$
  \begin{align*}
    \dashint_{B} \abs{V(\nabla h) - V(Q)}^2\,dx
    &\lesssim
      \phi^*_{\abs{A(Q)}} \bigg( \dashint_{2B} \abs{A(\nabla h) -
      A(Q)}\,dx \bigg).
  \end{align*}
\end{lemma}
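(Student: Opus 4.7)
The plan is to combine a nonlinear Caccioppoli inequality with an Orlicz-Sobolev-Poincaré inequality and a Jensen-plus-duality argument, all exploiting the monotonicity and shifted-$N$-function machinery of the previous subsection. Let $r$ denote the radius of $B$. Fix an affine function $\ell$ with $\nabla \ell = Q$ whose constant part is chosen so that $\mean{h - \ell}_{2B} = 0$.

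First, I would derive a Caccioppoli-type estimate. Testing the weak formulation of~\eqref{eq:pharmonic} with $\psi = (h-\ell)\eta^2$ for a standard cutoff $\eta \in C^\infty_c(2B)$ with $\eta \equiv 1$ on $B$ and $\norm{\nabla \eta}_\infty \lesssim 1/r$, and using that $A(Q)$ is a constant (hence divergence-free) vector, yields
\[
\int \eta^2 (A(\nabla h) - A(Q)) \cdot (\nabla h - Q)\, dx = -2 \int \eta\, (A(\nabla h) - A(Q)) \cdot (h - \ell)\, \nabla \eta\, dx.
\]
By Lemma~\ref{lem:hammer} the left-hand side is equivalent to $\int \eta^2 |V(\nabla h) - V(Q)|^2 \, dx$, whereas the right-hand side is controlled pointwise by the shifted Young inequality (available uniformly in the shift thanks to the $\Delta_2$, $\nabla_2$-properties of $\set{\phi_a}_a$):
\[
|A(\nabla h) - A(Q)|\, \tfrac{|h-\ell|}{r} \leq \epsilon\, \phi^*_{|A(Q)|}(|A(\nabla h)-A(Q)|) + c_\epsilon\, \phi_{|Q|}\!\Big(\tfrac{|h-\ell|}{r}\Big).
\]
Since the first summand is again equivalent to $|V(\nabla h) - V(Q)|^2$ by Lemma~\ref{lem:hammer}, choosing $\epsilon$ small and iterating on a family of nested balls by the standard Giaquinta-Giusti hole-filling lemma gives
\[
\dashint_B |V(\nabla h) - V(Q)|^2\, dx \lesssim \dashint_{2B} \phi_{|Q|}\!\Big(\tfrac{|h - \ell|}{r}\Big) dx.
\]

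Next, I would invoke an Orlicz-Sobolev-Poincaré inequality with $L^1$-average on the right-hand side,
\[
\dashint_{2B} \phi_{|Q|}\!\Big(\tfrac{|h-\ell|}{r}\Big) dx \lesssim \phi_{|Q|}\!\Big(\dashint_{2B} |\nabla h - Q|\, dx\Big),
\]
which follows from the Orlicz Sobolev-Poincaré inequality with some small exponent $\sigma \in (0,1)$ combined with the self-improving higher integrability of $\nabla h - Q$ (Gehring's lemma applied to the Caccioppoli of the previous step). For the final reduction, since $p \ge 2$ the shifted derivative $\phi'_{|Q|}$ is convex on $[0,\infty)$, so Jensen's inequality together with $\phi'_{|Q|}(|\nabla h - Q|) \eqsim |A(\nabla h) - A(Q)|$ from Lemma~\ref{lem:hammer} yields
\[
\phi'_{|Q|}\!\Big(\dashint_{2B} |\nabla h - Q|\, dx\Big) \lesssim \dashint_{2B} |A(\nabla h) - A(Q)|\, dx.
\]
Applying the monotone function $\phi^*_{|A(Q)|}$ to both sides and invoking the duality identity $\phi^*_{|A(Q)|}(\phi'_{|Q|}(t)) = (\phi_{|Q|})^*\big((\phi_{|Q|})'(t)\big) \eqsim \phi_{|Q|}(t)$ chains the three estimates into the desired bound.

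The main obstacle is the Orlicz-Sobolev-Poincaré step with $L^1$-average on the right: a plain Orlicz Sobolev-Poincaré inequality would only control $\phi_{|Q|}(|h-\ell|/r)$ by $\phi_{|Q|}$ applied to an $L^\sigma$-average with $\sigma \in (0,1)$, and pushing this exponent down to one genuinely relies on the PDE-specific reverse Hölder property of $\nabla h - Q$ obtained from Gehring's lemma. All remaining steps are purely algebraic and rely only on the monotonicity, Young, and duality relations for shifted N-functions already developed in Subsection~\ref{ssec:shift-orlicz-funct}.
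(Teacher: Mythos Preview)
The paper does not give its own proof of this lemma but simply cites~\cite[Corollary~3.5]{DieKapSch11}, and your sketch reproduces precisely the strategy used there: a shifted Caccioppoli inequality, an Orlicz--Sobolev--Poincar\'e step, and the passage from $\phi_{|Q|}$ to $\phi^*_{|A(Q)|}$ via the convexity of $\phi'_{|Q|}$ (valid for $p\ge 2$) and the duality relation $(\phi_{|Q|})^*=\phi^*_{|A(Q)|}$. One terminological quibble: in your step~2 the self-improvement you need is the \emph{downward} iteration of the reverse H\"older inequality (pushing the exponent $\sigma$ on the right-hand side to any $\sigma'>0$ small enough that $(\phi_{|Q|})^{\sigma'}$ is essentially concave, then applying Jensen), which is elementary and does not require Gehring's lemma in the usual upward sense; otherwise the argument is sound.
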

By combining this with Lemma~\ref{lem:hammer}, it follows that
\begin{align*}
  \dashint_{B} \phi^*_{\abs{A(Q)}} \big( \abs{A(\nabla h) -
  A(Q)}\big) \,dx
  &\lesssim
    \phi^*_{\abs{A(Q)}} \bigg( \dashint_{2B} \abs{A(\nabla h) -
    A(Q)}\,dx \bigg).
\end{align*}
If we now apply the inverse of~$\phi^*_{\abs{A(Q)}}$ to both sides,
then we obtain
\begin{align}
  \label{eq:3}
  \big(\phi^*_{\abs{A(Q)}}
  \big)^{-1} \bigg(  \dashint_{B} \phi^*_{\abs{A(Q)}} \big( \abs{A(\nabla h) -
  A(Q)}\big) \,dx  \bigg)
  &\lesssim
    \dashint_{2B} \abs{A(\nabla h) -
    A(Q)}\,dx.
\end{align}
In order to proceed further, we shall need the following auxiliary lemma.
\begin{lemma}
  \label{lem:phi-ast-inv-hoelder}
  If $p\geq 2$, then for all~$Q$ we have
  \begin{align*}
    \bigg(\dashint_B \abs{f}^{p'}\,dx\bigg)^{\frac 1{p'}}
    &\lesssim
      \big(\phi^*_{\abs{A(Q)}} \big)^{-1}\bigg( \dashint_B
      \phi^*_{\abs{A(Q)}}(\abs{f})\,dx\bigg).
  \end{align*}
\end{lemma}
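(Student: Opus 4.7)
Set $b := \abs{A(Q)}$. Since $p \geq 2$ (so $p' \leq 2$), the shifted N-function splits into two regimes: using $\phi^*_b(t) \eqsim (b+t)^{p'-2}t^2$, we have
\begin{align*}
  \phi^*_b(t) &\eqsim b^{p'-2}\,t^2 \qquad \text{for } t \leq b, \\
  \phi^*_b(t) &\eqsim t^{p'} \qquad\qquad \text{for } t \geq b.
\end{align*}
The plan is to exploit this dichotomy by splitting the ball $B$ into $B_1 := \set{x\in B \sep \abs{f(x)} \leq b}$ and $B_2 := B\setminus B_1$, controlling each piece separately, and then reassembling.

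On $B_2$, where $\phi^*_b(\abs{f}) \eqsim \abs{f}^{p'}$ pointwise, the bound
\begin{equation*}
  \dashint_B \mathbf{1}_{B_2}\,\abs{f}^{p'}\,dx \lesssim \dashint_B \phi^*_b(\abs{f})\,dx
\end{equation*}
is immediate. The slightly trickier piece is $B_1$, where $\phi^*_b$ grows only quadratically. Here I would apply Jensen's inequality to the concave function $t \mapsto t^{p'/2}$ (available since $p'/2 \leq 1$) with $g = \mathbf{1}_{B_1}\abs{f}^2$ to obtain
\begin{equation*}
  \dashint_B \mathbf{1}_{B_1}\,\abs{f}^{p'}\,dx = \dashint_B \bigl(\mathbf{1}_{B_1}\abs{f}^2\bigr)^{p'/2}\,dx \leq \biggl(\dashint_B \mathbf{1}_{B_1}\,\abs{f}^2\,dx\biggr)^{p'/2},
\end{equation*}
and then use the lower bound $\phi^*_b(t) \gtrsim b^{p'-2}t^2$ on $B_1$ to conclude $\dashint_B \mathbf{1}_{B_1}\abs{f}^2\,dx \lesssim b^{2-p'}\dashint_B \phi^*_b(\abs{f})\,dx$. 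Combining both pieces gives
\begin{equation*}
  \dashint_B \abs{f}^{p'}\,dx \lesssim \dashint_B \phi^*_b(\abs{f})\,dx + b^{p'(2-p')/2}\,\biggl(\dashint_B \phi^*_b(\abs{f})\,dx\biggr)^{p'/2}.
\end{equation*}

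To finish, I would introduce $I := \dashint_B \phi^*_b(\abs{f})\,dx$ and $U := (\phi^*_b)^{-1}(I)$, so that $\phi^*_b(U) = I$, and verify the target inequality $\dashint_B \abs{f}^{p'}\,dx \lesssim U^{p'}$ by the same two-regime analysis: if $U \geq b$ then $I \eqsim U^{p'}$ and $b^{p'(2-p')/2} \leq U^{p'(2-p')/2}$, while if $U \leq b$ then $I \eqsim b^{p'-2}U^2$ and the exponents of $b$ cancel exactly; in both cases the right-hand side collapses to $U^{p'}$ up to constants. Raising to the power $1/p'$ yields the claim.

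The main obstacle I expect is the piece on $B_1$: one cannot get a pointwise bound $\abs{f}^{p'} \lesssim \phi^*_b(\abs{f})$ there (the inequality goes the wrong way when $\abs{f} \leq b$), so the Jensen step with the concave power $p'/2$ is essential and is exactly where the restriction $p \geq 2$ (equivalently $p' \leq 2$) gets used. The subsequent bookkeeping of $b$-powers in the two regimes $U \lessgtr b$ is routine but must be tracked carefully so that the constant really is independent of $Q$.
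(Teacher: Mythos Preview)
Your argument is correct: the split into $B_1$ and $B_2$, the concave Jensen step on $B_1$, and the final case analysis $U\lessgtr b$ all go through exactly as you describe, with the $b$-exponents cancelling as claimed.

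The paper's approach is different and noticeably shorter. Instead of decomposing $B$, it observes that the single function
\[
  \psi(t):=\phi^*_{\abs{A(Q)}}\big(t^{1/p'}\big)
\]
is convex on $[0,\infty)$ when $p\geq 2$ (for $t^{1/p'}\leq b$ one has $\psi'(t)\eqsim b^{p'-2}t^{2/p'-1}$, which is nondecreasing since $2/p'\geq 1$; for $t^{1/p'}>b$ one has $\psi'(t)\equiv \tfrac{1}{p'}$). A single application of Jensen's inequality then gives
\[
  \phi^*_{\abs{A(Q)}}\!\left(\Big(\dashint_B\abs{f}^{p'}\,dx\Big)^{1/p'}\right)
  =\psi\!\left(\dashint_B\abs{f}^{p'}\,dx\right)
  \leq \dashint_B \psi\big(\abs{f}^{p'}\big)\,dx
  =\dashint_B \phi^*_{\abs{A(Q)}}(\abs{f})\,dx,
\]
and applying $(\phi^*_{\abs{A(Q)}})^{-1}$ finishes the proof. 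In effect, the paper packages your two regimes into the convexity of $\psi$, so that both the splitting of $B$ and the subsequent $U\lessgtr b$ bookkeeping disappear. Your route has the minor advantage of being entirely explicit about where each piece of the shifted function is used, but the paper's one-line Jensen argument is the cleaner formulation.
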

\begin{proof}
  We estimate 
  \begin{align*}
    \psi(t) := \phi^*_{\abs{A(Q)}}\big(t^{\frac{1}{p'}}\big)
    &\eqsim \Big(\abs{A(Q)} +
      t^{\frac {1}{p'}}\Big)^{p'-2} t^{\frac
      2{p'}}
    \eqsim
      \begin{cases}
        \abs{A(Q)}^{p'-2}\, t^{\frac 2{p'}} &\quad \text{for $t^{\frac{1}{p'}} \leq
          \abs{A(Q)}$},
        \\
        t &\quad \text{for $t^{\frac{1}{p'}} > \abs{A(Q)}$}.
      \end{cases}
  \end{align*}
  Now $\psi$ is continuous and convex. Thus, by Jensen's inequality,
  \begin{align*}
    \phi^*_{\abs{A(Q)}}\Bigg( \bigg(\dashint_B
    \abs{f}^{p'}\,dx\bigg)^{\frac 1{p'}} \Bigg)
    &\eqsim \psi\bigg(\dashint_B
    \abs{f}^{p'}\,dx\bigg)
    \geq \dashint_B \psi\Big( \abs{f}^{p'} \Big)\,dx
      \eqsim
      \dashint_B
      \phi^*_{\abs{A(Q)}}(\abs{f})\,dx.
  \end{align*}
  This proves the claim.
\end{proof}
\begin{proposition}[Reverse H\"older]
  \label{pro:A-rev-hoelder}
  Let $h$ be $p$-harmonic with~$p\geq 2$. Then
  \begin{align*}
    \bigg(\dashint_{B} \abs{A(\nabla h) - \mean{A(\nabla h)}_B}^{p'}\,dx \bigg)^{\frac 1{p'}}
    &\lesssim
      \dashint_{2B} \abs{A(\nabla h) - \mean{A(\nabla h)}_{2B}}\,dx.
  \end{align*}
\end{proposition}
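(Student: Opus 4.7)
The plan is to chain three ingredients that are already at our disposal: Lemma~\ref{lem:osc-aux}, Lemma~\ref{lem:phi-ast-inv-hoelder}, and the reverse-type estimate~\eqref{eq:3} that was derived from Lemma~\ref{lem:rev-hoelder}. The key preparatory choice is to pick the constant vector
\begin{align*}
  Q := \mean{\nabla h}_{2B}^A,
\end{align*}
which is well-defined since $A$ is an isomorphism, and which has the crucial property that $A(Q) = \mean{A(\nabla h)}_{2B}$. This is the constant that will glue the final step to the right-hand side of the target inequality.

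First I would invoke Lemma~\ref{lem:osc-aux} with $q=p'$ on the ball~$B$, which tells us that replacing $\mean{A(\nabla h)}_B$ by any constant can only increase the averaged $L^{p'}$-oscillation up to a factor of~$2$. In particular, taking the competitor constant $A(Q)$ gives
\begin{align*}
  \bigg(\dashint_B |A(\nabla h) - \mean{A(\nabla h)}_B|^{p'}\,dx\bigg)^{1/p'}
  \lesssim \bigg(\dashint_B |A(\nabla h) - A(Q)|^{p'}\,dx\bigg)^{1/p'}.
\end{align*}

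Next I would apply Lemma~\ref{lem:phi-ast-inv-hoelder} to the function $f := A(\nabla h) - A(Q)$ on the ball~$B$ to transfer from the $L^{p'}$-average to the shifted Orlicz average, bounding the right-hand side above by
\begin{align*}
  (\phi^*_{|A(Q)|})^{-1}\bigg(\dashint_B \phi^*_{|A(Q)|}\bigl(|A(\nabla h) - A(Q)|\bigr)\,dx\bigg).
\end{align*}
At this point the already-derived inequality~\eqref{eq:3}, which is precisely the reverse-Hölder estimate extracted from Lemma~\ref{lem:rev-hoelder} combined with Lemma~\ref{lem:hammer}, controls this quantity by $\dashint_{2B} |A(\nabla h) - A(Q)|\,dx$. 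Because $A(Q)$ was chosen to equal $\mean{A(\nabla h)}_{2B}$, this last expression is exactly the $L^1$-oscillation of $A(\nabla h)$ over~$2B$, closing the chain.

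There is no real obstacle here beyond orchestrating the three lemmas; the non-obvious point is recognising that the ``$A$-average'' $\mean{\nabla h}_{2B}^A$ is the right constant to link Lemma~\ref{lem:phi-ast-inv-hoelder} (which needs a fixed vector~$Q$ controlling the shift $\phi^*_{|A(Q)|}$) to the eventual oscillation of $A(\nabla h)$ over~$2B$. The ball-enlargement from $B$ to $2B$ is handled entirely by the passage through~\eqref{eq:3}, so no further covering or iteration argument is required.
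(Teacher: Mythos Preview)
Your proposal is correct and essentially identical to the paper's proof: the paper also defines $Q$ via $A(Q)=\mean{A(\nabla h)}_{2B}$, combines \eqref{eq:3} with Lemma~\ref{lem:phi-ast-inv-hoelder}, and then uses Lemma~\ref{lem:osc-aux} to swap the mean on the left from $\mean{A(\nabla h)}_{2B}$ to $\mean{A(\nabla h)}_B$. The only cosmetic difference is that you invoke Lemma~\ref{lem:osc-aux} at the beginning rather than at the end, which is immaterial.
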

\begin{proof}
  We define~$Q$ by $A(Q) :=\mean{A(\nabla h)}_{2B}$. Then
  the combination of~\eqref{eq:3} and
  Lemma~\ref{lem:phi-ast-inv-hoelder} proves the claim with the mean
  value on the left-hand side replaced by $\mean{A(\nabla
    h)}_{2B}$. Due to Lemma~\ref{lem:osc-aux} we can exchange it
  by~$\mean{A(\nabla h)}_B$ which completes the proof.
\end{proof}
Overall, we see that the oscillation of~$A(\nabla h)$ can be measured
with power~$1$ or~$p'$. In particular, if we combine
Theorem~\ref{thm:h-decay} (which still has to be proven) and
Proposition~\ref{pro:A-rev-hoelder}, we get the following decay
estimate in powers of~$p'$.
\begin{proposition}
  \label{pro:h-decay-pprime}
  Let $h\colon \Omega \to \setR$ be $p$-harmonic with $p \geq 2$
  on~$\Omega \subset \setR^2$. Then for all $\beta \in (0,1)$ there
  exists~$c_\beta>0$ such that for all balls~$B \subset \Omega$ and
  all $\theta \in (0,1)$ there holds
  \begin{align*}
    \bigg( \dashint_{\theta B} \abs{A(\nabla h) - \mean{A(\nabla h)}_{\theta
    B}}^{p'} \,dx \bigg)^{\frac 1{p'}}
    &\leq c_\beta\, \theta^\beta
      \dashint_{B} \abs{A(\nabla h) - \mean{A(\nabla h)}_{
      B}} \,dx.
  \end{align*}
\end{proposition}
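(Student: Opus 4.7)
My plan is to prove the proposition exactly along the route already anticipated in the paragraph preceding the statement, namely by chaining the reverse H\"older estimate Proposition~\ref{pro:A-rev-hoelder} with the $L^1$-oscillation decay of Theorem~\ref{thm:h-decay}. The former upgrades the integrability of the oscillation from $1$ to $p'$ at the cost of doubling the radius, while the latter pays back this doubled radius by an almost-linear power $\theta^\beta$. Since no new decay property is needed, I expect the argument to be essentially bookkeeping.

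Concretely, fix $\beta\in(0,1)$, a ball $B\subset\Omega$ (with enough room that reverse H\"older is legitimate on subballs up to the size of~$B$), and $\theta\in(0,1/2]$. I would first apply Proposition~\ref{pro:A-rev-hoelder} to the ball $\theta B$, which is admissible because $2\theta B\subseteq B\subset\Omega$, to obtain
\[
 \bigg(\dashint_{\theta B}\abs{A(\nabla h)-\mean{A(\nabla h)}_{\theta B}}^{p'}dx\bigg)^{1/p'}
 \lesssim
 \dashint_{2\theta B}\abs{A(\nabla h)-\mean{A(\nabla h)}_{2\theta B}}dx.
\]
Since $2\theta\in(0,1]$, I would then invoke Theorem~\ref{thm:h-decay} with radius parameter $2\theta$ in place of $\theta$ to bound the right-hand side by
\[
 c_\beta\,(2\theta)^\beta \dashint_B\abs{A(\nabla h)-\mean{A(\nabla h)}_B}dx
 \leq 2^\beta c_\beta\,\theta^\beta \dashint_B\abs{A(\nabla h)-\mean{A(\nabla h)}_B}dx,
\]
and the harmless factor $2^\beta$ can be absorbed into $c_\beta$.

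The remaining range $\theta\in(1/2,1)$ is essentially trivial: there $\theta^\beta\in[2^{-\beta},1)$ is bounded below, so the asserted estimate reduces, up to a constant depending only on $\beta$, to the estimate already established at $\theta=1/2$. This reduction is immediate from the volume comparison $|\theta B|\eqsim|B|\eqsim|(1/2)B|$ and from Lemma~\ref{lem:osc-aux}, which allows exchanging the integral mean $\mean{A(\nabla h)}_{\theta B}$ for $\mean{A(\nabla h)}_{(1/2)B}$ at the cost of a multiplicative factor of at most~$2$. I do not anticipate any genuine obstacle beyond this bookkeeping; both key ingredients are in hand, and what is left is the direct combination.
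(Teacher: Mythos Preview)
Your approach is exactly the one indicated by the paper: chain Proposition~\ref{pro:A-rev-hoelder} with Theorem~\ref{thm:h-decay}, and your treatment of the main range $\theta\in(0,1/2]$ is correct. One small slip in the bookkeeping for $\theta\in(1/2,1)$: since $(1/2)B\subset\theta B$, volume comparison and Lemma~\ref{lem:osc-aux} do \emph{not} let you bound the $p'$-oscillation on the larger set $\theta B$ by that on $(1/2)B$; this range is harmless in applications (only small $\theta$ matter) but as stated needs either the hypothesis $2B\subset\Omega$ so that reverse H\"older applies directly to $\theta B$, or a Gehring-type self-improvement of the reverse H\"older inequality.
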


\subsection{Non-degenerate case}
\label{ssec:non-degenerate-case-h}

Let us consider the non-degenerate case. That is, we will
assume that for some fixed ball $B$ there holds
\begin{align}
  \label{eq:non-deg-cond-h}
  \dashint_B \abs{V(\nabla h) - \mean{V(\nabla h)}_B}^2\,dx &\leq \epsilonDG
  \dashint_B \abs{V(\nabla h)}^2\,dx
\end{align}
with some suitably small~$\epsilonDG>0$.

Inequality \eqref{eq:non-deg-cond-h} means that in this situation~$V(\nabla h)$ (and
therefore also~$\nabla h$) behaves almost like a constant on~$B$. In
particular,~$A(\nabla h) \eqsim \abs{\mean{\nabla h}_B}^{p-2} \nabla
h$. Hence, it is possible to treat the $p$-Laplace equation like a
perturbation of a linear equation with constant coefficients. This
approach was used in~\cite[Proposition~28]{DieLenStrVer12} to prove
(almost) linear decay estimates of the oscillation in this
non-degenerate situation. In fact, it was shown that the decay
estimate for the oscillations of~$V$ even holds in the case of
quasi-convex functionals with Orlicz growth (for any dimension). Our
situation is just a special case. In particular, we obtain:
\begin{lemma}
  \label{lem:V-decay-nondeg}
  For every~$\beta \in (0,1)$ there exists~$c=c(p,\beta)>0$ and
  $\epsilonDG=\epsilonDG(p,\beta)>0$ such that if the non-degeneracy
  condition~\eqref{eq:non-deg-cond-h} holds, then
  \begin{align*}
    \bigg(\dashint_{\theta B} \abs{V(\nabla h) - \mean{V(\nabla
    h)}_{\theta B}}^2\,dx \bigg)^{\frac 12}
    &\leq c\, \theta^\beta \bigg(
      \dashint_{B} \abs{V(\nabla h) - \mean{V(\nabla h)}_B}^2\,dx
      \bigg)^{\frac 12}
  \end{align*}
  for all~$\theta \in (0,1]$.
\end{lemma}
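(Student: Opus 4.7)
The plan is to proceed by linearization and harmonic approximation, in the spirit of the cited result \cite[Proposition~28]{DieLenStrVer12}. In the non-degenerate regime \eqref{eq:non-deg-cond-h}, the $V$-average $Q_0 := \mean{\nabla h}_B^V$, defined through $V(Q_0) = \mean{V(\nabla h)}_B$, satisfies $\abs{Q_0}^p = \abs{V(Q_0)}^2 \eqsim \dashint_B \abs{V(\nabla h)}^2\,dx > 0$ as soon as $\epsilonDG$ is small enough, and by Lemma~\ref{lem:DKS6.2} the oscillation hypothesis is equivalent to $\dashint_B \abs{V(\nabla h) - V(Q_0)}^2\,dx \leq c\,\epsilonDG\,\abs{Q_0}^p$. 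Consequently the differential $DA(Q_0) = \abs{Q_0}^{p-2}\bigl(I + (p-2)\,\abs{Q_0}^{-2}\,Q_0\otimes Q_0\bigr)$ is a constant-coefficient, uniformly elliptic tensor with ellipticity constants depending only on $p$, and the equation for $h$ can be viewed as a perturbation of the linear equation $-\divergence(DA(Q_0)\,\nabla\,\cdot\,)=0$.

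First, I would compare $h$ on $\tfrac{1}{2}B$ with the solution $\tilde h$ of the frozen constant-coefficient equation having the same boundary datum as $h$ on $\partial(\tfrac{1}{2}B)$. Classical Schauder/$C^\infty$ estimates for this linear equation give, for every $\theta\in(0,\tfrac{1}{2}]$,
\[
\dashint_{\theta B}\bigabs{\nabla \tilde h - \mean{\nabla \tilde h}_{\theta B}}^2\,dx \;\lesssim\; \theta^{2}\,\dashint_{B/2}\bigabs{\nabla \tilde h - \mean{\nabla \tilde h}_{B/2}}^2\,dx.
\]

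Second, I would control the difference $w := h - \tilde h$ by a harmonic-approximation estimate. Testing the equation for $w$ against $w$ itself, and splitting the integration according to whether $\abs{\nabla h - Q_0}$ is small or large compared with $\abs{Q_0}$, the shifted Young inequality together with the monotonicity equivalences of Lemma~\ref{lem:hammer} and the shift-change Lemma~\ref{lem:shift-change} should yield
\[
\dashint_{B/2} \abs{Q_0}^{p-2}\,\abs{\nabla w}^2\,dx \;\leq\; \eta(\epsilonDG)\,\dashint_{B} \abs{V(\nabla h) - V(Q_0)}^2\,dx
\]
with $\eta(\epsilonDG)\to 0$ as $\epsilonDG\to 0$. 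This step is the main obstacle: on the small bad set where $\abs{\nabla h - Q_0}\gtrsim\abs{Q_0}$ the operator $A$ no longer looks linear, so the nonlinear error must be absorbed into the quadratic quantity $\abs{V(\nabla h) - V(Q_0)}^2$ via the shifted $N$-function machinery; this is precisely where the restriction $p\geq 2$ buys us uniform ellipticity and straightforward shift changes.

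Third, adding and subtracting $\nabla\tilde h$, using Lemma~\ref{lem:DKS6.2} once more, and the comparability $\abs{V(\nabla h) - V(Q_0)}^2 \eqsim \abs{Q_0}^{p-2}\abs{\nabla h - Q_0}^2$ valid in the non-degenerate regime, the two preceding steps combine to
\[
\dashint_{\theta B} \abs{V(\nabla h) - \mean{V(\nabla h)}_{\theta B}}^2\,dx \;\lesssim\; \bigl(\theta^{2} + \theta^{-d}\eta(\epsilonDG)\bigr)\,\dashint_{B} \abs{V(\nabla h) - \mean{V(\nabla h)}_{B}}^2\,dx
\]
for every $\theta\in(0,\tfrac12]$. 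Given $\beta\in(0,1)$, choose $\theta_0\in(0,\tfrac12]$ so small that $\theta_0^{2} \leq \tfrac14\theta_0^{2\beta}$, and then $\epsilonDG$ so small that $\theta_0^{-d}\eta(\epsilonDG) \leq \tfrac14\theta_0^{2\beta}$; a standard Campanato-type iteration on the dyadic radii $\theta_0^{k}$ (noting that the non-degeneracy condition is preserved on each sub-ball since the $V$-oscillation only decreases) transfers this one-step decay to arbitrary $\theta\in(0,1]$, yielding the claimed estimate with a constant depending only on $p$ and $\beta$.
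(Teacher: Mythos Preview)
The paper does not give its own proof of this lemma: it simply quotes \cite[Proposition~28]{DieLenStrVer12} and observes that the present situation is a special case. Your sketch follows exactly the harmonic-approximation strategy of that reference (freeze the coefficients at $Q_0=\mean{\nabla h}_B^V$, compare with the constant-coefficient solution, estimate the remainder $H(\nabla h,Q_0)$, and iterate), so in spirit you are reproducing the cited argument rather than offering an alternative route.

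Two points in your outline deserve more care. First, the pointwise comparability $\abs{V(\nabla h)-V(Q_0)}^2 \eqsim \abs{Q_0}^{p-2}\abs{\nabla h-Q_0}^2$ that you invoke in Step~3 is only valid where $\abs{\nabla h-Q_0}\lesssim\abs{Q_0}$; on the ``bad'' set the weight is $(\abs{Q_0}+\abs{\nabla h-Q_0})^{p-2}$, which for $p>2$ is strictly larger. You correctly anticipate the good/bad splitting in Step~2, but in Step~3 you cannot simply pass from the linear quantity $\abs{Q_0}^{p-2}\abs{\nabla w}^2$ back to $\abs{V(\nabla h)-\mean{V(\nabla h)}_{\theta B}}^2$ without again treating the bad set separately (this is where the shift-change Lemma~\ref{lem:shift-change} and an $\epsilonDG$-dependent smallness of the bad set are actually used in \cite{DieLenStrVer12}). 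Second, your iteration step asserts that ``the non-degeneracy condition is preserved on each sub-ball since the $V$-oscillation only decreases''. This is not automatic: decreasing the numerator of \eqref{eq:non-deg-cond-h} does not by itself control the denominator $\dashint_{\theta_0^k B}\abs{V(\nabla h)}^2$, which could in principle shrink as well. One needs the additional observation that $\abs{\mean{V(\nabla h)}_{\theta_0 B}}$ stays comparable to $\abs{\mean{V(\nabla h)}_B}$ after one step (compare the argument of Lemma~\ref{lem:tauB-average} in the later section on~$u$), so that the ratio in \eqref{eq:non-deg-cond-h} actually improves and the iteration closes.
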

From this (almost) linear decay of the oscillations of~$V$ in the
non-degenerate case, we will now derive (almost) linear decay of the
oscillations of~$A$.
\begin{proposition}
  \label{pro:decay-A-nondeg}
  For every~$\beta \in (0,1)$, there exists~$c=c(p,\beta)>0$ and
  $\epsilonDG=\epsilonDG(p,\beta)>0$ such that if the non-degeneracy
  condition~\eqref{eq:non-deg-cond-h} holds, then
  \begin{align*}
    \dashint_{\theta B}
    \abs{A(\nabla h) - \mean{A(\nabla h)}_{\theta B}}\,dx
    &\leq c\,\theta^{\beta} \dashint_{B}
      \abs{A(\nabla h) - \mean{A(\nabla h)}_B }\,dx
  \end{align*}
  for all~$\theta \in (0,1]$.
\end{proposition}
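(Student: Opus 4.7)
The strategy is to transfer the almost linear $L^2$-decay of $V(\nabla h)$ from Lemma~\ref{lem:V-decay-nondeg} into an $L^1$-decay of $A(\nabla h)$, exploiting that in the non-degenerate regime the nonlinear map $\nabla h \mapsto A(\nabla h)$ behaves pointwise like a linear one. Define $Q_\theta$ by $V(Q_\theta):=\mean{V(\nabla h)}_{\theta B}$; in particular $Q_1 = Q_0$. A telescoping Campanato-type argument based on iterating Lemma~\ref{lem:V-decay-nondeg} shows that $\abs{Q_\theta}\eqsim\abs{Q_0}$ uniformly in~$\theta$ and that the non-degeneracy condition propagates (with a better constant) to every ball $\theta B$, provided $\epsilonDG$ is chosen small enough. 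By Lemma~\ref{lem:osc-aux} the task reduces to bounding $\dashint_{\theta B}\abs{A(\nabla h)-A(Q_\theta)}\,dx$.

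Next I split $\theta B = E_1\cup E_2$ with $E_1 := \set{x\in\theta B \sep \abs{\nabla h(x)-Q_\theta}\le\tfrac12\abs{Q_\theta}}$. On $E_1$ the comparability $\abs{\nabla h}+\abs{Q_\theta}\eqsim\abs{Q_\theta}$ together with Lemma~\ref{lem:hammer} gives the pointwise linearization $\abs{A(\nabla h)-A(Q_\theta)} \lesssim \abs{Q_0}^{(p-2)/2}\abs{V(\nabla h)-V(Q_\theta)}$, whereas on $E_2$ one has $\abs{\nabla h}+\abs{Q_\theta}\eqsim\abs{\nabla h-Q_\theta}$, yielding $\abs{A(\nabla h)-A(Q_\theta)}\lesssim\abs{V(\nabla h)-V(Q_\theta)}^{2(p-1)/p}$. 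Chebyshev's inequality together with Lemma~\ref{lem:V-decay-nondeg} supplies $\abs{E_2}/\abs{\theta B}\lesssim \theta^{2\beta'}\epsilonDG$, where $\beta'\in(0,1)$ is the exponent from that lemma. Cauchy--Schwarz and the $V$-decay then handle the $E_1$-part, while H\"older's inequality with exponents $p/(p-1)$ and $p$, combined with the smallness of $\abs{E_2}$ and one more application of \eqref{eq:non-deg-cond-h}, treats the $E_2$-part and yields the same bound multiplied by an extra factor $\theta^{\beta'}\sqrt{\epsilonDG}$ that is absorbed. Summing the two pieces and invoking Lemma~\ref{lem:osc-aux} once more gives the intermediate estimate
\begin{align*}
    \dashint_{\theta B}\abs{A(\nabla h)-\mean{A(\nabla h)}_{\theta B}}\,dx \lesssim \theta^{\beta'}\abs{Q_0}^{(p-2)/2}\bigg(\dashint_{B}\abs{V(\nabla h)-V(Q_0)}^2\,dx\bigg)^{1/2}.
\end{align*}

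It remains to re-express the right-hand side as an $L^1$-oscillation of $A$. Applying the reverse H\"older estimate for $V$-oscillations (Lemma~\ref{lem:rev-hoelder}) with $Q=Q_0$, and using the asymptotics $\phi^*_{\abs{A(Q_0)}}(s)\eqsim\min\set{\abs{Q_0}^{2-p}s^2,\,s^{p'}}$ together with the algebraic identity $(p-1)(p'-2)=2-p$, one verifies that in \emph{both} regimes of the minimum
\begin{align*}
    \abs{Q_0}^{(p-2)/2}\bigg(\dashint_{B}\abs{V(\nabla h)-V(Q_0)}^2\,dx\bigg)^{1/2} \lesssim \dashint_{2B}\abs{A(\nabla h)-A(Q_0)}\,dx \lesssim \dashint_{2B}\abs{A(\nabla h)-\mean{A(\nabla h)}_{2B}}\,dx.
\end{align*}
A dyadic rescaling --- apply the bound just derived to the ball $B/2$, whose non-degeneracy is inherited from that of $B$ when $\epsilonDG$ is small --- converts $2B$ back into $B$ at the cost of the constant $2^{\beta'}$, finishing the proof with $\beta:=\beta'$. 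The hard part is precisely this last conversion: Lemma~\ref{lem:rev-hoelder} is of $L^2$-to-$L^1$ type at the level of $V$-oscillations, and only through the two-regime analysis of the shifted complementary N-function $\phi^*_{\abs{A(Q_0)}}$ does it give the $L^1$-to-$L^1$ type estimate needed for $A$-oscillations; a secondary bookkeeping difficulty is the propagation of \eqref{eq:non-deg-cond-h} to the sub-ball $B/2$, which fixes the quantitative smallness of $\epsilonDG$ as a function of the target $\beta$.
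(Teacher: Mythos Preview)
Your argument is correct and takes a genuinely different route from the paper's. The paper works entirely inside the shifted N-function calculus: it bounds $I_m:=\dashint_{B_m}\abs{V(\nabla h)-V(\mean{\nabla h}^A_{B_m})}^2$ from above via Lemma~\ref{lem:V-decay-nondeg} and Lemma~\ref{lem:rev-hoelder}, performs a shift-change (Lemma~\ref{lem:shift-change}) to move the base point from $B_0$ to $B_m$, and then bounds $I_m$ from below by Jensen. After applying $(\phi^*_{\abs{\mean{A(\nabla h)}_{B_m}}})^{-1}$ this produces the desired $A$-oscillation inequality plus an additive error $\abs{\mean{A(\nabla h)}_{B_0}-\mean{A(\nabla h)}_{B_m}}$, which is telescoped and closed with the algebraic Lemma~\ref{lem:iterative}. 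Your proof instead linearises pointwise by splitting $\theta B$ into a ``good'' set $E_1$ (where $\abs{\nabla h-Q_\theta}\le\tfrac12\abs{Q_\theta}$) and its complement, estimates the bad set's measure by Chebyshev, and combines via H\"older; no shift-change and no iterative lemma are needed. What your approach buys is transparency---each step is an elementary pointwise inequality---at the cost of having to track the propagation of~\eqref{eq:non-deg-cond-h} and the comparability $\abs{Q_\theta}\eqsim\abs{Q_0}$ explicitly. The paper's approach hides these facts inside the N-function machinery and the self-correcting recursion of Lemma~\ref{lem:iterative}.

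One small point to tighten: in your final conversion $\dashint_{2B}\abs{A(\nabla h)-A(Q_0)}\,dx\lesssim\dashint_{2B}\abs{A(\nabla h)-\mean{A(\nabla h)}_{2B}}\,dx$, the constant $A(Q_0)$ (with $Q_0=\mean{\nabla h}^V_B$) is not a priori close to $\mean{A(\nabla h)}_{2B}$. The cleanest fix is to run Step~9 with $\mean{\nabla h}^A_B$ in place of $Q_0$---this is harmless since $\abs{\mean{\nabla h}^A_B}\eqsim\abs{Q_0}$ in the non-degenerate regime (Lemma~\ref{lem:B-non-deg}) and the $V$-oscillations are equivalent by Lemma~\ref{lem:DKS6.2}---so that the right-hand side becomes $\dashint_{2B}\abs{A(\nabla h)-\mean{A(\nabla h)}_{B}}\,dx$, which is then bounded by $\dashint_{2B}\abs{A(\nabla h)-\mean{A(\nabla h)}_{2B}}\,dx$ via the elementary inequality $\abs{\mean{A(\nabla h)}_B-\mean{A(\nabla h)}_{2B}}\le 2^d\dashint_{2B}\abs{A(\nabla h)-\mean{A(\nabla h)}_{2B}}\,dx$.
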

\begin{proof}
  It suffices to prove the claim for~$\theta = 2^{-m}$
  with~$m \in \setN_0$. For this purpose, let us define $B_m := 2^{-m} B$. 
  Using Lemma~\ref{lem:DKS6.2} and Lemma~\ref{lem:V-decay-nondeg} we can estimate
  \begin{align*}
    I_m &:= \dashint_{B_m} \abs{V(\nabla h) -
                 V(\mean{\nabla h}^A_{B_m})}^2\,dx
    \\
               &\leq c\, \dashint_{B_m} \abs{V(\nabla h) - \mean{V(\nabla h)}_{B_m}}^2\,dx
    \\
               &\leq c\, 2^{-2m\beta} \dashint_{B_1} \abs{V(\nabla h) -
                 \mean{V(\nabla h)}_{B_1}}^2\,dx
    \\
               &\leq c\, 2^{-2m\beta} \dashint_{B_1} \abs{V(\nabla h) -
                 V(\mean{\nabla h}^A_{B_0})}^2\,dx.
  \end{align*}
  The reverse H\"older type estimate in Lemma~\ref{lem:rev-hoelder} with $Q=\mean{\nabla h}_{B_0}^A$ then implies
  \begin{align*}
    I_m  &\leq c\, 2^{-2m\beta} \, \phi^*_{\abs{\mean{A(\nabla h)}_{B_0}}} \bigg( \dashint_B \abs{A(\nabla h) -
                  \mean{A(\nabla h)}_{B_0})}\,dx \bigg).
  \end{align*}
  Using the shift-change Lemma~\ref{lem:shift-change} (with $\lambda=1$) and Lemma~\ref{lem:hammer} we get
  \begin{align*}
    I_m  &\leq c\, 2^{-2m\beta} \Bigg(
                  \phi^*_{\abs{\mean{A(\nabla h)}_{B_m}}} \bigg( \dashint_B \abs{A(\nabla h) -
                  \mean{A(\nabla h)}_{B_0}}\,dx \bigg)
    +              \abs{V(\mean{\nabla h}^A_{B_0}) - V(\mean{\nabla h}^A_{B_m})}^2 \Bigg)
    \\
                &\leq c\, 2^{-2m\beta} \Bigg( \phi^*_{\abs{\mean{A(\nabla h}_{B_m}}} \bigg( \dashint_B \abs{A(\nabla h) -
                  \mean{A(\nabla h)}_{B_0}}\,dx \bigg)
    \\
                &\qquad \qquad \qquad
     + \phi^*_{\abs{\mean{A(\nabla h)}_{B_m}}}(\abs{\mean{A(\nabla h)}_{B_0} -
                  A(\mean{\nabla h}^A_{B_m})}) \Bigg)
    \\
                &\leq c\, 2^{-2m\beta}  \phi^*_{\abs{\mean{A(\nabla
                  h)}_{B_m}}} \bigg( \dashint_B \abs{A(\nabla h) -
                  \mean{A(\nabla h)}_{B_0}}\,dx
             +ot     \abs{\mean{A(\nabla h)}_{B_0} -
                  \mean{A(\nabla h)}_{B_m} } ) \bigg).
  \end{align*}
  Since~$p\geq 2$, we have
  $\phi^*_a(\theta t) \geq c\, \theta^2\, \phi^*_a(t)$ for all
  $a,t \geq 0$ and all~$\theta \in [0,1]$. 
  Thus,
  \begin{align}
    \label{eq:1}
    I_m &\leq c\, \phi^*_{\abs{\mean{A(\nabla h)}_{B_m}}} \Bigg( 2^{-m\beta} \bigg(
                 \dashint_B \abs{A(\nabla h) -
                 \mean{A(\nabla h)}_{B_0}}\,dx  
   \nonumber\\  
    &\qquad \qquad \qquad \qquad\qquad\qquad+
             \abs{\mean{A(\nabla h)}_{B_0} - \mean{A(\nabla h)}_{B_m} } \bigg) \Bigg).
  \end{align}

  On the other hand, with Lemma~\ref{lem:hammer} and Jensen's inequality,
  \begin{align*}
    I_m &= \dashint_{B_m} \abs{V(\nabla h) -
                 V(\mean{\nabla h}^A_{B_m})}^2\,dx
    \\
      &\geq c\, \dashint_{B_m} \phi^*_{\abs{\mean{A(\nabla h)}_{B_m}}}
        \big(\abs{A(\nabla h) - \mean{A(\nabla h)}_{B_m}} \big)\,dx
        \\
      &\geq c\, \phi^*_{\abs{\mean{A(\nabla h)}_{B_m}}} \bigg( \dashint_{B_m}
        \abs{A(\nabla h) - \mean{A(\nabla h)}_{B_m}}\,dx \bigg)
  \end{align*}
  We combine this with~\eqref{eq:1} and apply the inverse
  of~$\phi^*_{\abs{\mean{A(\nabla h)}_{B_m}}}$ to obtain
  \begin{align*}
    \lefteqn{\dashint_{B_m}
    \abs{A(\nabla h) - \mean{A(\nabla h)}_{B_m}}\,dx} \qquad &
                                                               \\
    &\lesssim
      2^{-m\beta} \bigg(
      \dashint_{B_0} \abs{A(\nabla h) -
      \mean{A(\nabla h)}_{B_0}}\,dx  + \abs{\mean{A(\nabla h)}_{B_0} - \mean{A(\nabla h)}_{B_m}} \bigg).
  \end{align*}
  Now,
  \begin{align*}
    \abs{\mean{A(\nabla h)}_{B_0} - \mean{A(\nabla h)}_{B_m}}
    &\leq \sum_{k=0}^{m-1}  \abs{\mean{A(\nabla h)}_{B_k} -
      \mean{A(\nabla h)}_{B_{k+1}}}
    \\
    &\lesssim\sum_{k=0}^{m-1}\dashint_{B_k}
      \abs{A(\nabla h) - \mean{A(\nabla h)}_{B_k}}\,dx
  \end{align*}
  such that from the previous estimate it follows that
  \begin{align*}
    \dashint_{B_m}
    \abs{A(\nabla h) - \mean{A(\nabla h)}_{B_m}}\,dx
    &\leq c\, 2^{-m\beta} \sum_{k=0}^{m-1}\dashint_{B_k}
      \abs{A(\nabla h) - \mean{A(\nabla h)}_{B_k}}\,dx, \quad m\in\setN.
  \end{align*}
  Finally, by Lemma~\ref{lem:iterative} below we conclude
  \begin{align*}
    \dashint_{B_m}
    \abs{A(\nabla h) - \mean{A(\nabla h)}_{B_m}}\,dx
    &\leq c_\beta\,2^{-m\beta} \dashint_{B}
      \abs{A(\nabla h) - \mean{A(\nabla h)}_{B}}\,dx
  \end{align*}
  and the proof is complete.
\end{proof}
In the proof of Proposition \ref{pro:decay-A-nondeg} we have used the following algebraic lemma which is shown here for the sake of completeness.
\begin{lemma}
  \label{lem:iterative}
  Assume that for some~$c_0,\beta>0$ the non-negative sequence $(a_m)_{m\in\setN_0}$ satisfies 
  \begin{align*}
    a_m &\leq c_0 \,2^{-m \beta} \sum_{k=0}^{m-1} a_k, \qquad m\in\setN.
  \end{align*}
  Then for all $m\in\setN$ there holds
  \begin{align*}
    a_m \leq 2^{-m \beta}
    \exp\bigg( \frac{c_0}{1-2^{-\beta}}\bigg) \,a_0.
  \end{align*}
\end{lemma}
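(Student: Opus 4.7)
The natural device is to track the partial sums $S_m := \sum_{k=0}^{m-1} a_k$ (with the convention $S_0 = 0$, $S_1 = a_0$), because the hypothesis can be rewritten as a one-step multiplicative recursion on these sums. Indeed, $a_m = S_{m+1} - S_m$, so the assumption
\begin{align*}
  a_m \leq c_0\, 2^{-m\beta}\, S_m, \qquad m\in\setN,
\end{align*}
is equivalent to
\begin{align*}
  S_{m+1} \leq \bigl(1 + c_0\, 2^{-m\beta}\bigr) S_m, \qquad m\in\setN.
\end{align*}

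Iterating this bound from $m=1$ up to an arbitrary $m\in\setN$ gives
\begin{align*}
  S_{m+1} \leq S_1 \prod_{k=1}^{m}\bigl(1 + c_0\, 2^{-k\beta}\bigr).
\end{align*}
Here I would apply the elementary inequality $1+x \leq e^x$ termwise and bound the resulting geometric series by its infinite tail:
\begin{align*}
  \prod_{k=1}^{m}\bigl(1 + c_0\, 2^{-k\beta}\bigr)
  \leq \exp\!\Bigg(c_0 \sum_{k=1}^{\infty} 2^{-k\beta}\Bigg)
  = \exp\!\bigg(\frac{c_0}{2^{\beta}-1}\bigg).
\end{align*}
Combined with $S_1 = a_0$, this yields a uniform bound $S_m \leq a_0 \exp\bigl(c_0/(2^\beta-1)\bigr)$ for all $m\in\setN$.

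Feeding this back into the original hypothesis gives
\begin{align*}
  a_m \leq c_0\, 2^{-m\beta}\, S_m \leq c_0\,2^{-m\beta}\, a_0 \exp\!\bigg(\frac{c_0}{2^\beta-1}\bigg).
\end{align*}
To reach the form stated in the lemma, I would absorb the prefactor $c_0$ into the exponential using $c_0 \leq e^{c_0}$ (a consequence of $1+x\leq e^x$), obtaining
\begin{align*}
  c_0 \exp\!\bigg(\frac{c_0}{2^\beta-1}\bigg)
  \leq \exp(c_0)\exp\!\bigg(\frac{c_0}{2^\beta-1}\bigg)
  = \exp\!\bigg(\frac{c_0}{1-2^{-\beta}}\bigg),
\end{align*}
where the last equality uses $1 + 1/(2^\beta-1) = 2^\beta/(2^\beta-1) = 1/(1-2^{-\beta})$. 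This gives exactly the claimed bound.

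There is no real obstacle here; the only thing to be careful about is choosing the right auxiliary sequence so that the recursion telescopes. Partial sums work immediately because they turn an a-priori cumulative inequality into a product, which is then handled by the standard $1+x\leq e^x$ trick; equivalently one could set $b_m := 2^{m\beta} a_m$ and run an induction, but the partial-sum reformulation makes both the iteration and the exponential estimate essentially automatic.
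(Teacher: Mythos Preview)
Your proof is correct and follows essentially the same approach as the paper's: both convert the cumulative hypothesis into a telescoping product of factors $(1 + c_0\,2^{-k\beta})$ and then bound the product via $1+x\le e^x$ together with the geometric series. The only cosmetic difference is that the paper tracks an auxiliary sequence $b_m$ solving the equality version of the recursion (so the product starts at $k=0$ and yields the constant $\exp\bigl(c_0/(1-2^{-\beta})\bigr)$ directly), whereas you track the partial sums $S_m$ and recover the missing $k=0$ term at the end by absorbing the leftover prefactor $c_0$ via $c_0\le e^{c_0}$.
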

\begin{proof}
  Let us define $b_0 := a_0$ and $b_m := c_0\,2^{-m \beta} \sum_{k=0}^{m-1}
  b_k$ for $m\in\setN$. Then by induction there holds~$a_m \leq b_m$. Moreover, it is $b_{m+1} - 2^{-\beta} b_m= c_0 2^{-(m+1)\beta} b_m$ such that
  $b_{m+1} = 2^{-\beta} (1+c_0 2^{-m \beta}) b_m$ and hence
  \begin{align*}
    b_m &= b_0\,2^{-m\beta} \prod_{k=0}^{m-1} \big(1+c_0 2^{-k\beta}), \qquad m\in\setN,
  \end{align*}
  where
  \begin{align*}
  	\prod_{k=0}^{m-1} \big(1+c_0 2^{-k\beta}) 
  	\leq \exp\left( \sum_{k=0}^{\infty} c_0 2^{-k\beta}\right) = \exp\bigg( \frac{c_0}{1-2^{-\beta}}\bigg)=:B.
  \end{align*}
  Thus, we have
  \begin{align*}
	a_m \leq b_m \leq b_0\, 2^{-m\beta} B 
	= 2^{-m \beta} \exp\bigg( \frac{c_0}{1-2^{-\beta}}\bigg) \,a_0,
	\qquad m\in\setN,
\end{align*}  
  as claimed.
\end{proof}

\subsection{Degenerate  case}
\label{ssec:degenerate-case-h}

Let us now turn to the degenerate case.  We need the following
important qualitative regularity result from~\cite{AraTeiUrb17}. Its
proof is based on the estimates for quasi-conformal gradient maps
from~\cite{BaeKov05}.
\begin{lemma}[{\cite{AraTeiUrb17}}]
  \label{lem:teixeira}
  Let $h$ be $p$-harmonic with~$p\geq 2$. Then for
  all~$\theta \in (0, \frac 12]$ it holds
  \begin{align*}
    \sup_{x,y\in\theta B}|\nabla h(x)-\nabla  h(y)|\leq c_0\,
    \theta^{\alpha} \bigg( \dashint_B |\nabla h - \mean{\nabla h}_B|^2 \, dx \bigg)^{\frac 12}.
  \end{align*}
  where
  \begin{align*}
    \alpha = \alpha(p) &= \frac{1}{2p} \bigg( -3 - \frac{1}{p-1} + \sqrt{33 +
                \frac{30}{p-1} + \frac{1}{(p-1)^2}} \bigg) \geq \frac{1}{p-1}.
  \end{align*}
\end{lemma}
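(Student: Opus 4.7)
The plan is to invoke the planar quasi-conformal structure of the gradient map for the $p$-Laplace equation and to appeal to the H\"older regularity theory for quasi-regular mappings, which is precisely the route underlying \cite{BaeKov05} and \cite{AraTeiUrb17}.

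The first step is a scaling reduction. Let $M := \big(\dashint_B |\nabla h - \mean{\nabla h}_B|^2\,dx\big)^{1/2}$, the case $M = 0$ being trivial, and write $B = B_r(x_0)$. Passing to $\tilde h(y) := (h(x_0 + ry) - r\mean{\nabla h}_B \cdot y)/(Mr)$ yields a $p$-harmonic function on $B_1(0)$ satisfying $\dashint_{B_1}|\nabla \tilde h|^2\,dx \lesssim 1$, and the claim reduces to the scale-invariant assertion that any such $\tilde h$ obeys $\sup_{y_1,y_2\in B_\theta} |\nabla \tilde h(y_1) - \nabla \tilde h(y_2)| \le c_0\,\theta^\alpha$ for all $\theta\in(0,\tfrac12]$. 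It therefore suffices to establish an interior $C^{1,\alpha}$-estimate for $p$-harmonic functions quantified by their $L^2$-gradient norm on the enclosing ball.

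Next I would exploit the planarity. In $d=2$ the complex gradient $f := \partial_1 h - i\,\partial_2 h$ of a $p$-harmonic $h$ satisfies a nonlinear Beltrami-type system of the form $\partial_{\bar z} f = \mu(f)\,\partial_z f + \nu(f)\,\overline{\partial_z f}$ whose distortion coefficients are bounded in terms of the ellipticity ratio $p-1$ of the linearized $p$-Laplacian. Consequently $f$ is $K(p)$-quasi-regular with an explicit $K = K(p)$. This Beltrami representation, due originally to Bojarski--Iwaniec and exploited in \cite{BaeKov05}, is the only genuinely two-dimensional ingredient of the argument.

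Finally, the classical Mori-type H\"older estimate for $K$-quasi-regular planar mappings provides $f\in C^{0,\alpha}_{\loc}(B_{1/2})$ with H\"older semi-norm controlled by the $L^2$-norm of $f$ on $B_1$ and with exponent $\alpha = \alpha(K)$. Tracing the dependence of $K$ on $p$ through this theorem yields the explicit formula for $\alpha(p)$ stated in the lemma, and the bound $\alpha \ge 1/(p-1)$ follows from an elementary estimation of the expression under the square root. Rescaling back to $B$ absorbs all constants into a $c_0 = c_0(p)$. I expect the main obstacle to be the rigorous derivation of the Beltrami form with the sharp dilatation $K(p)$ that produces the precise exponent; once this piece is identified in \cite{BaeKov05}, the H\"older estimate and the scaling reduction are routine.
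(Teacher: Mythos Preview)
Your overall strategy---exploiting that the complex gradient $f = \partial_1 h - i\,\partial_2 h$ is $K(p)$-quasi-regular and invoking a H\"older bound from \cite{BaeKov05,AraTeiUrb17}---is exactly the route the paper takes. However, your scaling reduction contains a genuine error: for $p\neq 2$ the function $\tilde h(y) = (h(x_0+ry) - r\mean{\nabla h}_B\cdot y)/(Mr)$ is \emph{not} $p$-harmonic. Subtracting an affine function from $h$ replaces $\nabla h$ by $\nabla h - a$, and since $A(Q) = |Q|^{p-2}Q$ is nonlinear, $\divergence(A(\nabla h - a))$ does not vanish. Thus your normalization to $\dashint_{B_1}|\nabla\tilde h|^2\lesssim 1$ is illegitimate, and the Mori-type bound you invoke (which controls $[f]_{C^\alpha}$ by $\|f\|_{L^2}$) would only yield $\sup_{\theta B}|\nabla h(x)-\nabla h(y)|\lesssim \theta^\alpha\big(\dashint_B|\nabla h|^2\,dx\big)^{1/2}$, with the full gradient on the right rather than its oscillation.

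The paper avoids this obstruction by quoting from \cite{AraTeiUrb17} the estimate $[\varphi]_{C^{0,\alpha}(B_{1/2})}\lesssim \|\nabla\varphi\|_{L^2(B_{1/2})}$, i.e., control by $\|\nabla^2 h\|_{L^2}$ rather than $\|\nabla h\|_{L^2}$. This is then followed by the Caccioppoli inequality for the quasi-regular map~$f$, which bounds $\|\nabla f\|_{L^2(\frac34 B)}$ by $\|f - c\|_{L^2(B)}$ for \emph{any} constant~$c$; choosing $c=\mean{\nabla h}_B$ recovers the oscillation on the right-hand side. The subtraction of the mean is thus performed at the level of the Caccioppoli estimate (where it is harmless) rather than at the level of the equation (where it is not). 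Your argument can be repaired by inserting exactly this two-step mechanism in place of the flawed affine normalization.
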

\begin{proof}
  We will use the following estimate for complex gradients $\phi$ from~\cite[page~546]{AraTeiUrb17}:
  \begin{align*}
    [\varphi]_{C^{0,\alpha} (B_{\frac 1 2})}
    \le  c(p,\alpha) \, 2^{1+\alpha}\|\nabla \phi\|_{L^2(B_{\frac 1 2})} ,
  \end{align*}
  where~$\alpha=\alpha(p)$ and $c(p,\alpha)=\sqrt{\frac{p-1}{\alpha(p)}}$. In
  our notations it follows that
  \begin{align*}
    \sup_{x,y\in\theta B}|\nabla h(x)-\nabla  h(y)|\leq c_0\,
    \theta^{\alpha} \bigg( \dashint_{\frac 34 B} (R\,|\nabla^2 h|)^2 \, dx
    \bigg)^{\frac 12},
  \end{align*}
  where~$R$ is the radius of~$B$.  Now, the Caccioppoli estimate for
  quasi-conformal maps proves the claim.
\end{proof}
\begin{remark}
  \label{rem:non-optimal-alpha}
  Note that the exponent~$\alpha=\alpha(p)$ from
  Lemma~\ref{lem:teixeira} is smaller than the one
  in~\cite{Aro88,IwaMan89}.  Unfortunately, these articles do not
  provide quantitative estimates, so we have to rely on the possibly
  non-optimal estimate of Lemma~\ref{lem:teixeira}. For example, the
  regularity for~$p$-harmonic maps goes to~$C^{0,1/3}$, as
  $p\to \infty$, but $\lim_{p\to \infty} \alpha(p) =0$.  Nevertheless,
  the exponent from Lemma~\ref{lem:teixeira} is sufficient for (most
  of) our purposes, since $\alpha(p) > \frac 1{p-1}$ for $p>2$. See
  also the discussions in Subsection~\ref{ssec:open-problem}.
\end{remark}

For our purpose we need a $p$-version of the previous lemma.
\begin{lemma}
  \label{lem:teixeira-p}
  Let $h$ be~$p$-harmonic with~$p \geq 2$ let~$\alpha>0$ be as in
  Lemma~\ref{lem:teixeira}. Then for
  all~$\theta \in (0,1/2]$
  \begin{align*}
    \sup_{\theta B}|\nabla h(x)-\nabla h(y)|\leq c_0\, \theta^{\alpha}
    \bigg(\dashint_B |\nabla h - \mean{\nabla h}_B|^p \,
    dx\bigg)^{\frac 1p}.
  \end{align*}
\end{lemma}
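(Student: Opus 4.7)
The plan is to derive Lemma~\ref{lem:teixeira-p} directly from Lemma~\ref{lem:teixeira} by enlarging the $L^2$-average on the right-hand side to an $L^p$-average. Since $p\geq 2$, the function $t\mapsto t^{p/2}$ is convex on $[0,\infty)$, so Jensen's inequality applied to the (normalized) integral over $B$ gives
\begin{align*}
\bigg(\dashint_B |\nabla h - \mean{\nabla h}_B|^2 \,dx\bigg)^{\frac 12}
\leq \bigg(\dashint_B |\nabla h - \mean{\nabla h}_B|^p \,dx\bigg)^{\frac 1p}.
\end{align*}
Inserting this into the estimate of Lemma~\ref{lem:teixeira} produces the claimed inequality with the same constant~$c_0$ and the same Hölder exponent~$\alpha=\alpha(p)$.

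In this sense there is no serious obstacle: the $L^p$-formulation is a genuinely weaker statement than the $L^2$-formulation (the averaged $L^p$-norm dominates the averaged $L^2$-norm on any fixed ball when $p\geq 2$), and it is recorded separately only because the $p$-th power is the natural integrability scale for $p$-harmonic functions and will be the more convenient expression when the lemma is combined with the Caccioppoli-type and reverse Hölder estimates in the degenerate case treated in Subsection~\ref{ssec:degenerate-case-h}.
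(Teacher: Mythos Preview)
Your argument is correct and coincides with the paper's own proof: the paper likewise obtains Lemma~\ref{lem:teixeira-p} from Lemma~\ref{lem:teixeira} by applying Jensen's inequality to the right-hand side, using $p\geq 2$.
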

\begin{proof}
  This follows from Lemma~\ref{lem:teixeira} if we apply Jensen's
  inequality to the right-hand side using~$p \geq 2$.
\end{proof}

As a further technical step we also need the following (non-optimal) decay
estimate for~$V$ from~\cite{DieStrVer09}.
\begin{lemma}[{\cite[Theorem~6.4]{DieStrVer09}}]
  \label{lem:V-decay}
  There exists~$\gamma>0$ such that for all balls~$B$ and all~$\theta
  \in (0,1)$ there holds
  \begin{align*}
    \bigg(\dashint_{\theta B} \abs{V(\nabla h) - \mean{V(\nabla
    h)}_{\theta B}}^2\,dx \bigg)^{\frac 12}
    &\leq c\, \theta^{\gamma} \bigg(
      \dashint_{B} \abs{V(\nabla h) - \mean{V(\nabla h)}_B}^2\,dx
      \bigg)^{\frac 12}.
  \end{align*}
\end{lemma}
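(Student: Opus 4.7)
The plan is to derive the decay of Lemma~\ref{lem:V-decay} by a classical Campanato/Morrey iteration, starting from a Caccioppoli inequality for $V(\nabla h)$ and upgrading it via Gehring's lemma. Since no sharp exponent is claimed, any $\gamma>0$ coming from a higher integrability gain is sufficient.

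First I would establish a Caccioppoli estimate for $V(\nabla h)$. Formally differentiating the $p$-harmonic equation $\divergence(A(\nabla h))=0$ in direction $e_k$ and testing with $\zeta^2\,\partial_k h$, where $\zeta$ is a standard cut-off with $\zeta\equiv 1$ on $\tfrac12 B$ and $\zeta\equiv 0$ outside $B$, yields after using the monotonicity identity
$
(\partial_k A(\nabla h))\cdot \partial_k\nabla h \eqsim |\partial_k V(\nabla h)|^2
$
(Lemma~\ref{lem:hammer}) the bound
\begin{align*}
  \int_{\frac12 B} |\nabla V(\nabla h)|^2\,dx
  &\leq \frac{c}{R^2}\int_{B} |V(\nabla h)-V(Q)|^2\,dx
\end{align*}
for every constant vector $Q$, where $R$ is the radius of~$B$. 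Choosing $Q$ so that $V(Q)=\mean{V(\nabla h)}_{B}$ and combining with the Sobolev--Poincar\'e inequality on the right-hand side produces, for some $q<2$,
\begin{align*}
  \bigg(\dashint_{\frac12 B}|\nabla V(\nabla h)|^2\,dx\bigg)^{\frac12}
  &\leq c\,\bigg(\dashint_{B}|\nabla V(\nabla h)|^q\,dx\bigg)^{\frac1q}.
\end{align*}
This is a reverse H\"older inequality on arbitrary balls compactly contained in $\Omega$.

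Next, Gehring's lemma upgrades the reverse H\"older inequality to a higher integrability statement: there exists $\delta>0$ with $|\nabla V(\nabla h)|\in L^{2+\delta}_{\loc}(\Omega)$ and a uniform estimate. Combining higher integrability with the Caccioppoli inequality and H\"older's inequality yields a Morrey-type excess bound
\begin{align*}
  \dashint_{\theta B}|V(\nabla h)-\mean{V(\nabla h)}_{\theta B}|^2\,dx
  &\leq c\,\theta^{2\gamma}\,\dashint_{B}|V(\nabla h)-\mean{V(\nabla h)}_{B}|^2\,dx
\end{align*}
for a suitable $\gamma=\gamma(p,d)>0$, via the standard Campanato-type iteration of dyadic scales together with the Morrey embedding applied to $V(\nabla h)$. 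Taking the square root delivers the statement of Lemma~\ref{lem:V-decay}.

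The main technical obstacle is the first step: making rigorous sense of the formally differentiated equation and the Caccioppoli inequality for $V(\nabla h)$ in the degenerate regime $p>2$, where $\nabla h$ may vanish. This is handled by working with a non-degenerate regularization $A_\varepsilon(\xi)=(\varepsilon^2+|\xi|^2)^{(p-2)/2}\xi$, proving the estimate with constants independent of $\varepsilon$ using the monotonicity bounds of Lemma~\ref{lem:hammer}, and then passing to the limit. Once that is in place, the rest is the standard Gehring--Campanato machinery and is routine; only the value of $\gamma>0$ is left unspecified, which matches the (non-optimal) character of the assertion.
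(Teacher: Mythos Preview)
The paper does not give its own proof here; Lemma~\ref{lem:V-decay} is quoted from~\cite[Theorem~6.4]{DieStrVer09}, so there is no in-paper argument to compare against directly.

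Your approach is sound in the planar case $d=2$, which is all the present paper actually uses. The Caccioppoli inequality you state is correct for $p\ge 2$: testing the differentiated equation with $\zeta^2(\partial_k h-Q_k)$ produces the weight $|\nabla h|^{p-2}|\nabla h-Q|^2$ on the right, which for $p\ge 2$ is dominated by $(|\nabla h|+|Q|)^{p-2}|\nabla h-Q|^2\eqsim|V(\nabla h)-V(Q)|^2$ via Lemma~\ref{lem:hammer}. Combined with Sobolev--Poincar\'e this yields a reverse H\"older inequality for $|\nabla V(\nabla h)|$, Gehring gives $\nabla V(\nabla h)\in L^{2+\delta}_{\loc}$, and since $2+\delta>d=2$ is automatic, Morrey's embedding furnishes H\"older continuity of $V(\nabla h)$ with exponent $\gamma=\delta/(2+\delta)>0$; the oscillation decay follows.

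The gap is dimensional. As Remark~\ref{rem:V-decay1} emphasises, the result from~\cite{DieStrVer09} holds in every dimension, but your Morrey step requires $2+\delta>d$, which fails for $d\ge 3$ because the Gehring gain $\delta$ is generically small and no plain Campanato iteration bridges the missing range. The proof in~\cite{DieStrVer09} is genuinely different: it goes through an $L^\infty$ bound for $V(\nabla h)$ and a De~Giorgi--type argument for the partial derivatives of~$h$, which yields H\"older continuity in any dimension with some unspecified exponent. Your Gehring/Morrey route is thus a legitimately simpler shortcut, but it is specific to the plane; for the general statement one must replace that step by a De~Giorgi iteration.
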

\begin{remark}
  \label{rem:V-decay1}
  The decay estimate in Lemma~\ref{lem:V-decay} is proven for any
  dimension. However, it provides no explicit lower bound
  for~$\gamma>0$. Therefore, it only provides a very slow, non-optimal
  decay for~$V$. See below in Subsection~\ref{ssec:open-problem} for
  discussions.
\end{remark}
Now we have enough tools at hand to prove an important assertion on alternatives:
\begin{proposition}
  \label{pro:deg-alternatives}
  Let~$h$ be $p$-harmonic with~$p \geq 2$ and let~$\beta \in
  (0,1)$. Suppose that~$h$ fails the non-degeneracy
  condition~\eqref{eq:non-deg-cond-h}, i.e., we have
  \begin{align*}
    \dashint_B \abs{V(\nabla h) - \mean{V(\nabla h)}_B}^2\,dx
    &> \epsilonDG
      \dashint_B \abs{V(\nabla h)}^2\,dx.
  \end{align*}
  Then there exist~$\theta_2=\theta_2(p,\beta, \epsilonDG)\in(0, \frac 12)$ and $m\geq 2$ such that for $\theta_1:=\theta_2^m$ 
  at least one of the following alternative applies:
  \begin{enumerate}
  \item \label{itm:deg-alt1} $h$ satisfies the non-degeneracy
    condition~\eqref{eq:non-deg-cond-h} on the ball~$\theta_1 B$.
  \item \label{itm:deg-alt2} $h$ satisfies the decay estimate
    \begin{align*}
      \bigg(\dashint_{\theta_2 B} \abs{A(\nabla h) - \mean{A(\nabla
      h)}_{\theta_2 B}}^{p'}\,dx \bigg)^{\frac{1}{p'}}
      &\leq \theta_2^{\beta}
        \bigg(\dashint_{B} \abs{A(\nabla h) - \mean{A(\nabla
        h)}_{B}}^{p'}\,dx\bigg)^{\frac{1}{p'}}.
    \end{align*}
  \end{enumerate}
\end{proposition}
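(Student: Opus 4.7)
The plan is to build a pointwise dichotomy on $\theta_2 B$: either $|\nabla h|$ is uniformly tiny there, which by $p\geq 2$ makes $A(\nabla h)$ power-small and yields alternative~(ii); or $|\nabla h|$ is bounded below, in which case the pointwise decay of $\osc\,\nabla h$ from Lemma~\ref{lem:teixeira-p} pins $\nabla h$ near a non-zero constant on the much smaller ball $\theta_1 B = \theta_2^m B$ and forces the non-degeneracy condition~\eqref{eq:non-deg-cond-h} there. Interestingly the hypothesis that~$h$ fails~\eqref{eq:non-deg-cond-h} on $B$ is not actually used in the proof itself; it only sets the context in which this proposition will be iterated.

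First I set up the relevant quantities
\[
N := \Bigl(\dashint_B |\nabla h - \mean{\nabla h}_B|^p\,dx\Bigr)^{1/p}, \qquad E := \Bigl(\dashint_B |A(\nabla h) - \mean{A(\nabla h)}_B|^{p'}\,dx\Bigr)^{1/p'}.
\]
For $p\geq 2$, the elementary inequality $(|P|+|Q|)^{p-2}|P-Q|^2 \geq |P-Q|^p$ together with Lemma~\ref{lem:hammer} gives $|V(P)-V(Q)|^2 \gtrsim |P-Q|^p$, while Corollary~\ref{cor:hammer-pge2} combined with Lemma~\ref{lem:hammer} yields $|V(P)-V(Q)|^2 \lesssim |A(P)-A(Q)|^{p'}$. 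Taking $P=\nabla h$ and $Q=\mean{\nabla h}_B^A$, averaging over $B$, and switching between the various averages via Lemma~\ref{lem:DKS6.2} and Lemma~\ref{lem:osc-aux} produces the reverse-H\"older bridge $N^p \lesssim E^{p'}$, i.e.\ $N^{p-1}\lesssim E$. A trivial rescaling of Lemma~\ref{lem:teixeira-p} then delivers the crucial pointwise estimate
\[
\osc_{\theta B}\nabla h \leq c_0\,\theta^{\alpha}\,N, \qquad \theta\in(0,\tfrac{1}{2}],
\]
with $\alpha = \alpha(p)\geq \tfrac{1}{p-1}$.

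I then split into two cases. \emph{Case~(ii).} If $\sup_{\theta_2 B}|\nabla h|\leq 2c_0\theta_2^{\alpha} N$, then pointwise on $\theta_2 B$ one has $|A(\nabla h)|\leq (2c_0)^{p-1}\theta_2^{\alpha(p-1)}N^{p-1}\lesssim \theta_2^{\alpha(p-1)}E$, so
\[
\Bigl(\dashint_{\theta_2 B}|A(\nabla h)-\mean{A(\nabla h)}_{\theta_2 B}|^{p'}\,dx\Bigr)^{1/p'} \leq 2\sup_{\theta_2 B}|A(\nabla h)| \lesssim \theta_2^{\alpha(p-1)}\,E.
\]
Since $\alpha(p-1)\geq 1>\beta$, choosing $\theta_2=\theta_2(p,\beta)\in(0,\tfrac{1}{2})$ small enough absorbs the implicit constant into the factor $\theta_2^{\alpha(p-1)-\beta}$ and proves alternative~\eqref{itm:deg-alt2}. \emph{Case~(i).} Otherwise there exists $x_0\in\theta_2 B$ with $|\nabla h(x_0)|>2c_0\theta_2^{\alpha}N$, and the pointwise control of Lemma~\ref{lem:teixeira-p} upgrades this to $\inf_{\theta_2 B}|\nabla h|>c_0\theta_2^{\alpha}N$. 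For $\theta_1 B = \theta_2^m B$ with $m\geq 2$ one then obtains
\[
\frac{\osc_{\theta_1 B}\nabla h}{\inf_{\theta_1 B}|\nabla h|} \leq \frac{c_0\,\theta_2^{m\alpha}\,N}{c_0\,\theta_2^{\alpha}\,N}=\theta_2^{(m-1)\alpha}.
\]
Using $|V(P)-V(Q)|^2 \eqsim (|P|+|Q|)^{p-2}|P-Q|^2$, $|V(\nabla h)|^2 = |\nabla h|^p$, and Lemma~\ref{lem:osc-aux}, a direct computation bounds the non-degeneracy quotient on $\theta_1 B$ by $C_p\,\theta_2^{2(m-1)\alpha}$. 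Choosing $m=m(p,\beta,\epsilonDG)\geq 2$ so large that $C_p\,\theta_2^{2(m-1)\alpha}\leq \epsilonDG$ yields alternative~\eqref{itm:deg-alt1}.

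The main obstacle I foresee is the reverse-H\"older bridge $N^{p-1}\lesssim E$: it must chain several shift-dependent Young-type estimates across the $V$-, $A$- and standard averages without accumulating constants that would pollute the $\theta_2^{\alpha(p-1)-\beta}$ gap used in Case~(ii). Once that bridge is established, the dichotomy is driven by the two elementary facts $\alpha(p-1)\geq 1>\beta$ and $\theta_2^{(m-1)\alpha}\to 0$ as $m\to\infty$, so that $\theta_2$ is fixed by the smallness gap $\alpha(p-1)-\beta$ and $m$ is then fixed by the required threshold $\epsilonDG$.
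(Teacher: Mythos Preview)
Your argument is correct and takes a genuinely different route from the paper's. The paper argues by contraposition: it assumes alternative~\ref{itm:deg-alt1} fails on $\theta_1 B$, then invokes the (non-optimal) $V$-decay of Lemma~\ref{lem:V-decay} with unknown exponent~$\gamma$ together with an $L^\infty$ estimate from~\cite{DieStrVer09} to control $|\nabla h(0)|^p$, and finally uses the \emph{degeneracy hypothesis on~$B$} to convert $\dashint_B|V(\nabla h)|^2$ into $\epsilonDG^{-1}$ times the $V$-oscillation. This produces factors $\theta_2^{\alpha p}/\epsilonDG$ and $\theta_1^{2\gamma}/\epsilonDG$, so both~$\theta_2$ and~$m$ end up depending on~$\epsilonDG$.

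Your pointwise dichotomy avoids all of this: you need neither Lemma~\ref{lem:V-decay} nor the external $L^\infty$ bound, and---as you correctly observe---you never use the degeneracy assumption on~$B$. The bridge $N^{p-1}\lesssim E$ is clean (it follows exactly as you say from Lemma~\ref{lem:hammer}, Lemma~\ref{lem:DKS6.2}, and Corollary~\ref{cor:hammer-pge2}, with no $\epsilonDG$ in sight), and the two exponent gaps $\alpha(p-1)\geq 1>\beta$ and $\theta_2^{(m-1)\alpha}\to 0$ close the argument. A small bonus is that your~$\theta_2$ depends only on~$p$ and~$\beta$, which is slightly stronger than the statement, and the case $p=2$ needs no separate treatment. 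The only minor point to make explicit in Case~(i) is that $\osc_{\theta_1 B}\nabla h \leq \inf_{\theta_1 B}|\nabla h|$ once $m\geq 2$, so that $\sup_{\theta_1 B}|\nabla h|\leq 2\inf_{\theta_1 B}|\nabla h|$ and the factor $(|P|+|Q|)^{p-2}$ in Lemma~\ref{lem:hammer} is controlled by $(\inf)^{p-2}$; this is implicit in your ``direct computation'' but worth one line.
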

\begin{proof}
  Without loss of generality we can assume that~$0$ is the center of~$B$.
  Suppose that for $\theta_1$ (to be specified later) alternative~\ref{itm:deg-alt1} fails on~$\theta_1 B$, i.e., that
  \begin{align*}
    \dashint_{\theta_1 B} \abs{V(\nabla h)}^2\,dx
    &< \frac{1}{\epsilonDG}
      \dashint_{\theta_1 B} \abs{V(\nabla h) - \mean{V(\nabla h)}_{\theta_1 B}}^2\,dx.
  \end{align*}
  Then the (non-optimal) decay estimate of~$p$-harmonic
  functions of Lemma~\ref{lem:V-decay} implies that there
  exists~$\gamma>0$ such that
  \begin{align*}
    \dashint_{\theta_1 B} \abs{V(\nabla h) - \mean{V(\nabla
    h)}_{\theta_1 B}}^2\,dx
    &\lesssim \theta_1^{2\gamma}
    \dashint_B \abs{V(\nabla h) - \mean{V(\nabla
    h)}_B}^2\,dx.
  \end{align*}
  So, the~$L^\infty$-estimate from~\cite[Lemma 5.8]{DieStrVer09} together with the 
  two previous bounds implies
  \begin{align}
    \label{eq:2}
    \abs{\nabla h(0)}^p &\leq c\,     \dashint_{\theta_1 B} \abs{V(\nabla h)}^2\,dx
       \lesssim \frac{\theta_1^{2\gamma}}{\epsilonDG}
                        \dashint_B \abs{V(\nabla h) - \mean{V(\nabla
                        h)}_B}^2\,dx
  \end{align}
  Moreover, for the larger ball~$\theta_2 B$ we employ Lemma \ref{lem:osc-aux} to derive
  \begin{align*}
    \dashint_{\theta_2 B} \abs{A(\nabla h) - \mean{A(\nabla
    h)}_{\theta_2 B}}^{p'} \,dx
    &\lesssim
      \dashint_{\theta_2 B} \abs{A(\nabla h)}^{p'} \,dx
    \\
    &=   \dashint_{\theta_2 B} \abs{\nabla h}^{p} \,dx
    \\
    &\lesssim \sup_{x\in \theta_2 B} (\abs{\nabla h(x)- \nabla h(0)}\big)^{p}  
    + \abs{\nabla h(0)}^p.
  \end{align*}
  This,~Lemma~\ref{lem:teixeira-p} (using~$\abs{V(\nabla h)}^2 =
  \abs{\nabla h}^p$), and~\eqref{eq:2} imply that
  \begin{align*}
    &\dashint_{\theta_1 B} \abs{A(\nabla h) - \mean{A(\nabla
    h)}_{\theta_2 B}}^{p'} \,dx 
   \lesssim \theta_2^{\alpha p} \dashint_B \abs{V(\nabla h)}^2\,dx +
      \frac{\theta_1^{2\gamma}}{\epsilonDG}
                        \dashint_B \abs{V(\nabla h) - \mean{V(\nabla
                        h)}_B}^2\,dx.
  \end{align*}
  Since~$h$ fails the non-degeneracy
condition~\eqref{eq:non-deg-cond-h} on~$B$, we obtain
  \begin{align*}
    \dashint_{\theta_2 B} \abs{A(\nabla h) - \mean{A(\nabla
    h)}_{\theta_2 B}}^{p'} \,dx
    &\lesssim \bigg( \frac{\theta_2^{\alpha p}}{\epsilonDG} +
      \frac{\theta_1^{2\gamma}}{\epsilonDG} \bigg)
      \dashint_B \abs{V(\nabla h) - \mean{V(\nabla
      h)}_B}^2\,dx \\
    &\leq \frac{c}{\epsilonDG} \, \big( \theta_2^{\alpha p}  +\theta_1^{2\gamma} \big)
      \dashint_B \abs{A(\nabla h) - \mean{A(\nabla
  		h)}_B}^{p'}\,dx,
  \end{align*}
  where for the second estimate we used that $\abs{V(P) - V(Q)}^2 \lesssim \abs{A(P)-A(Q)}^{p'}$ according to
  Corollary \ref{cor:hammer-pge2} and $p\geq 2$.

  Let us assume in the following that~$p>2$, since the claim of the
  lemma is standard for the linear case~$p=2$. Hence, $\alpha$ from Lemma \ref{lem:teixeira-p} satisfies $\alpha > \frac{1}{p-1}$ such that $\alpha p > p' > \beta p'$. Therefore we can
  choose~$\theta_2$ so small
  that~$c\,\theta_2^{\alpha p} / \epsilonDG \leq \frac 12
  \theta_2^{\beta p'}$.
  Now choose $m\geq 2$ large enough such that $\theta_1:=\theta_2^m$ satisfies 
 $c\,\theta_1^{2\gamma} /\epsilonDG \leq  \frac 12 \theta_2^{\beta p'}$. So, we finally obtain
  \begin{align*}
    \dashint_{\theta_2 B} \abs{A(\nabla h) - \mean{A(\nabla
    h)}_{\theta_2 B}}^{p'} \,dx
    &\leq \theta_2^{\beta p'}
      \dashint_B \abs{A(\nabla h) - \mean{A(\nabla
      h)}_B}^{p'}\,dx.
  \end{align*}
  This proves the claim.
\end{proof}

\subsection{Proof of Theorem~\ref{thm:h-decay}}
\label{ssec:proof-theor-refthm:h}

We are now prepared to prove our 
decay estimates for~$A(\nabla
h)$.
\begin{proof}[Proof of Theorem~\ref{thm:h-decay}]
  Given $p\geq 2$ and $\beta \in (0,1)$ fix~$\epsilonDG$ such that
  Proposition~\ref{pro:decay-A-nondeg} is applicable and
  choose~$\theta_2 \in (0, \tfrac 12)$ and $m\geq 2$ according to
  Proposition~\ref{pro:deg-alternatives}. Then it is enough to prove the
  claim for the special sequence of balls $B_k := \theta_2^k B$, $k\in\setN_0$. It indeed suffices to show that 
  \begin{align*}
	  \dashint_{B_k} \abs{A(\nabla h) - \mean{A(\nabla h)}_{B_k}}\,dx
 	  \leq c\,\theta_2^{k\beta} \dashint_{B_0} \abs{A(\nabla h) - \mean{A(\nabla h)}_{B_0}} \,dx, \quad k\in\setN\setminus\{1\}.
  \end{align*}
  To this end, let $k_0\in\setN_0$ denote the smallest number $k$ such that $h$ satisfies the non-degeneracy condition~\eqref{eq:non-deg-cond-h} on $B_k$ or on $B_{k+m}$. 
  If no such $k$ exists, we set $k_0:=\infty$.
  Then for every $k\in \setN$ with $k<k_0$ condition~\eqref{eq:non-deg-cond-h} is violated for $B_k$ and $\theta_1 B_{k}=B_{k+m}$. Therefore, the second alternative of Proposition~\ref{pro:deg-alternatives} applies and we inductively conclude that
  \begin{align*}
    \bigg( \dashint_{B_{k+1}} \abs{A(\nabla h) - \mean{A(\nabla
    h)}_{B_{k+1}}}^{p'}\,dx
    \bigg)^{\frac 1{p'}} \leq \theta_2^{k\beta}
    \bigg( \dashint_{B_1} \abs{A(\nabla h) - \mean{A(\nabla
    h)}_{B_1}}^{p'}\,dx \bigg)^{\frac 1{p'}}
  \end{align*}
  for all $1\leq k < k_0$.
    Using Jensen's inequality on the left-hand side and
  Proposition~\ref{pro:A-rev-hoelder} for the right-hand side (note that~$2B_1 \subset B_0$, since $\theta_2 < \frac 12$), we conclude the desired estimate for all $2 \leq k < k_0+1$.
    
    If $k_0=\infty$, the proof is finished. Otherwise, if $k_0\in\setN_0$, we are left with showing that for all $k>k_0$ there holds 
  \begin{align}\label{eq:k0}
	  \dashint_{B_k} \abs{A(\nabla h) - \mean{A(\nabla h)}_{B_k}}\,dx
 	  \leq c\,\theta_2^{(k-k_0)\beta} \dashint_{B_{k_0}} \abs{A(\nabla h) - \mean{A(\nabla h)}_{B_{k_0}}} \,dx.
  \end{align}
  By construction of~$k_0$, our solution satisfies the non-degeneracy condition~\eqref{eq:non-deg-cond-h} on $B_{k_0}$ or $B_{k_0+m}$. 
  In the first case, \eqref{eq:k0} directly follows from Proposition~\ref{pro:decay-A-nondeg} and the proof is complete. For the the second case, the same assertion yields that
  \begin{align*}
	  \dashint_{B_k} \abs{A(\nabla h) - \mean{A(\nabla h)}_{B_k}}\,dx
 	  \leq c\,\theta_2^{(k-(k_0+m))\beta} \dashint_{B_{k_0+m}} \abs{A(\nabla h) - \mean{A(\nabla h)}_{B_{k_0+m}}} \,dx
  \end{align*}
  for every $k \geq k_0+m$. Finally, it remains to note that for each $\ell\in\{0,1,\ldots,m\}$ we may estimate
  \begin{align*}
    \dashint_{B_{k_0+\ell}} \abs{A(\nabla h) - \mean{A(\nabla
    h)}_{B_{k+\ell}}}\,dx
    \leq c\,\theta_2^{\ell\beta}
    \dashint_{B_{k_0}} \abs{A(\nabla h) - \mean{A(\nabla
    h)}_{B_{k_0}}} \,dx,
  \end{align*}  
  since $\theta_2$ is assumed to be fixed. The combination of the last two bounds then shows \eqref{eq:k0} which completes the proof of Theorem~\ref{thm:h-decay}.
\end{proof}

\subsection{\texorpdfstring{The case $1 < p \leq 2$ and proof of
    Theorem~\ref{thm:h-decay-grad}}{The case 1 < p <= 2 and proof of
    Theorem \ref{thm:h-decay-grad}}}
\label{ssec:duality}

The situation for~$1<p<2$ strongly differs from the case~$p \geq
2$. Let us explain in this subsection what kind of results can be obtained
in this situation.

So let us assume here that~$h$ is~$p$-harmonic
on $\Omega\subset \setR^2$ with~$1<p<2$. The optimal regularity of such functions has been studied in detail in~\cite{Aro88,IwaMan89}. In particular, it has been shown
that~$\nabla h \in C^{k,\widetilde{\alpha}}_{\loc}(\Omega)$ with
\begin{align*}
  k+\widetilde{\alpha} & = \frac 16 \bigg( 1 + \frac{1}{p-1} + \sqrt{1 +
             \frac{14}{p-1} + \frac{1}{(p-1)^2}} \bigg)=: \eta(p).
\end{align*}
The optimality of this regularity result has been shown already by
Dobrowolski~\cite[Remark in Section~2]{Dob83}.  Expressed
in~$A(\nabla h)$ this translates to
$A(\nabla h) \in C^{\ell,\widetilde{\beta}}_{\loc}(\Omega)$ with
\begin{align*}
  \ell+\widetilde{\beta} &= \eta(p) (p-1) = \eta(p').
\end{align*}
Note that $\ell+\widetilde{\beta} \in (0,1)$ for~$p <2$ and
$\eta(p) \searrow \frac 13$, as $p \to \infty$. 
In particular, in general $A(\nabla h) \notin C^1$ for $p < 2$. By the
same argument it follows that for~$p<2$ we have
$A(\nabla h) \notin \bfB^s_{\rho,q}$ and
$A(\nabla h) \notin \bfF^s_{\rho,q}$ for any~$s \in (0,1)$ such that
$s - \frac{2}{\rho} > \ell + \widetilde{\beta}$. Therefore, it is not
possible to obtain (almost) linear decay estimates of~$A(\nabla h)$ as
in Theorem~\ref{thm:h-decay}.  Moreover,
Theorem~\ref{thm:reg-transfer} below cannot hold in full generality
for~$p < 2$, since it fails already for~$F=0$.

The natural object to look at for~$1<p\leq 2$ is~$\nabla u$ rather
than~$A(\nabla u)$. This becomes more clear by duality in the language
of differential forms. Indeed, we can use the following nice duality
trick from Hamburger~\cite[Section~5]{Ham92}. Let us assume that~$h$ is
$p$-harmonic and let us interpret it as a $0$-form. Then the $1$-form
$\omega := dh$ (which corresponds to~$\nabla h$) satisfies
\begin{align*}
  \delta A(\omega)=0, \qquad d\omega = 0.
\end{align*}
Now, if we define the $1$-form~$\tau$ by $*\tau := A(\omega)$ (using
that we are in two space dimensions), then
\begin{align*}
  \delta A^{-1}(\tau) =0, \qquad d \tau = 0.
\end{align*}
Since~$d \tau =0$, we find a $0$-form~$z$ with $\tau = dz$. Due to
$A^{-1}(Q) = \abs{Q}^{p'-2} Q$, we obtain that~$z$ is
$p'$-harmonic. In particular,~$h$ is $p$-harmonic if and only if its
conjugate solution~$z$ is $p'$-harmonic. Moreover, we have the
relation~$*\tau = A(\omega)$ and thus $\omega = A^{-1}(*\tau)$. 
This allows
to transfer estimates from~$A(\omega)$ to~$\tau$ and from $A^{-1}(*\tau)$
to~$\omega$. 

\begin{proof}[Proof of Theorem~\ref{thm:h-decay-grad}]
  If $h$ is $p$-harmonic with~$p \in (1,2)$, then its conjugate~$z$ is
  $p'$-harmonic with $p'>2$. Hence, from Theorem~\ref{thm:h-decay} we
  obtain decay estimates for the oscillations
  of~$A^{-1}(\tau)$. Using~$\omega=A^{-1}(*\tau)= *A^{-1}(\tau)$ this
  directly implies decay estimates for~$\omega$, resp. $\nabla u$. In
  particular, this proves Theorem~\ref{thm:h-decay-grad}.
\end{proof}

\subsection{Open problems}
\label{ssec:open-problem}
We have established in Theorem \ref{thm:h-decay} an (almost) linear
decay of the oscillation of~$A(\nabla h)$. This is optimal in the
sense that oscillations can never decay faster than linear. However, the limiting case of linear decay unfortunately is excluded by our method of proof. So we ask the following question:
\begin{question}
  Is it possible to obtain linear decay of the $A(\nabla h)$-oscillations for
  $p\geq 2$, i.e., does Theorem~\ref{thm:h-decay} also hold with~$\beta=1$?
\end{question}

\begin{remark}
  Let us remark that due to the
  behavior of $p$-harmonic functions $h$ in the plane a linear decay of $A(\nabla h)$-oscillations is only possible for~$p \geq 2$, see
  Subsection~\ref{ssec:duality}. In the case~$p \leq 2$, the corresponding question
  would be to obtain linear decay of $\nabla h$-oscillations, i.e., Theorem~\ref{thm:h-decay-grad} with~$\beta=1$.
\end{remark}

Parts of the proofs in this section and Section~\ref{sec:oscill-estim}
are based on the decay of $V(\nabla h)$-oscillations. However, the
decay estimate in Lemma~\ref{lem:V-decay} is non-optimal in the sense that it
provides no sharp lower bound for the decay exponent~$\gamma>0$. We
have used the estimate of~\cite{AraTeiUrb17} in order to prove an
(almost) optimal decay of the $A(\nabla h)$-oscillations. The same
method can be used to prove decay estimates for~$V(\nabla h)$ (in the
plane for $p \geq 2$). With the exponent~$\alpha=\alpha(p)>0$ from
Lemma~\ref{lem:teixeira} we obtain the following estimate:
\begin{lemma}
  Let $p \geq 2$ and let $h$ be a $p$-harmonic function in the
  plane. Then for every $\beta \in (0,\frac{\alpha\, p}{2})$ there
  exists~$c>0$ such that
  \begin{align*}
    \bigg(\dashint_{\theta B} \abs{V(\nabla h) - \mean{V(\nabla
    h)}_{\theta B}}^2\,dx \bigg)^{\frac 12}
    &\leq c\, \theta^{\beta} \bigg(
      \dashint_{B} \abs{V(\nabla h) - \mean{V(\nabla h)}_B}^2\,dx
      \bigg)^{\frac 12}
  \end{align*}
  for all~$\theta \in (0,1]$.
\end{lemma}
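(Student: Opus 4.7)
The plan is to mirror the strategy of Theorem~\ref{thm:h-decay}, but transported to the $V$-scale. As before, one splits into non-degenerate and degenerate balls. In the non-degenerate case Lemma~\ref{lem:V-decay-nondeg} already delivers the desired estimate for any $\beta\in(0,1)$, which is more than enough. All the work therefore lies in setting up a $V$-analogue of the dichotomy of Proposition~\ref{pro:deg-alternatives} and then iterating it exactly as in Subsection~\ref{ssec:proof-theor-refthm:h}.

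For the dichotomy, given $\beta\in(0,\alpha p/2)$ I would aim to produce $\theta_2\in(0,1/2)$ and $m\geq 2$ such that on every ball $B$ where~\eqref{eq:non-deg-cond-h} fails, either the non-degeneracy condition holds on $\theta_1 B=\theta_2^m B$, or
\[
\dashint_{\theta_2 B}\abs{V(\nabla h)-\mean{V(\nabla h)}_{\theta_2 B}}^2\,dx \le \theta_2^{2\beta}\dashint_B \abs{V(\nabla h)-\mean{V(\nabla h)}_B}^2\,dx.
\]
Assuming joint failure on $B$ and $\theta_1 B$, Lemma~\ref{lem:V-decay} gives the (non-optimal) $\theta_1^{2\gamma}$-decay of $V$-oscillations at scale $\theta_1 B$, and combining it with the failure of~\eqref{eq:non-deg-cond-h} on $\theta_1 B$ together with the pointwise $L^\infty$-bound of~\cite[Lemma~5.8]{DieStrVer09} yields
\[
\abs{\nabla h(0)}^p \lesssim \frac{\theta_1^{2\gamma}}{\epsilonDG}\,\dashint_B \abs{V(\nabla h)-\mean{V(\nabla h)}_B}^2\,dx.
\]
On $\theta_2 B$ I would then estimate $\dashint_{\theta_2 B}\abs{V(\nabla h)}^2=\dashint_{\theta_2 B}\abs{\nabla h}^p\lesssim \sup_{\theta_2 B}\abs{\nabla h-\nabla h(0)}^p+\abs{\nabla h(0)}^p$, bound the supremum via Lemma~\ref{lem:teixeira-p} by $c\,\theta_2^{\alpha p}\dashint_B\abs{\nabla h-\mean{\nabla h}_B}^p$, and finally use that for $p\geq 2$ one has $\abs{P-Q}^p\lesssim\abs{V(P)-V(Q)}^2$ (immediate from Lemma~\ref{lem:hammer}) together with Lemma~\ref{lem:DKS6.2} to replace the $\nabla h$-oscillation by the $V(\nabla h)$-oscillation. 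Putting everything together,
\[
\dashint_{\theta_2 B}\abs{V(\nabla h)}^2 \lesssim \Big(\theta_2^{\alpha p}+\frac{\theta_1^{2\gamma}}{\epsilonDG}\Big)\dashint_B \abs{V(\nabla h)-\mean{V(\nabla h)}_B}^2\,dx,
\]
and since $\beta<\alpha p/2$, fixing $\theta_2$ small enough that $c\,\theta_2^{\alpha p}\le\tfrac12\theta_2^{2\beta}$ and then $m$ large enough that $c\,\theta_1^{2\gamma}/\epsilonDG\le\tfrac12\theta_2^{2\beta}$ delivers the second alternative.

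With the dichotomy in hand, the iteration on the dyadic sequence $B_k=\theta_2^k B$ is exactly as in the proof of Theorem~\ref{thm:h-decay}: while degeneracy persists the alternative produces geometric decay with rate $\theta_2^\beta$; at the first scale $B_{k_0}$ (or $B_{k_0+m}$) on which~\eqref{eq:non-deg-cond-h} holds, one switches to Lemma~\ref{lem:V-decay-nondeg}, and the at most $m$ intermediate scales are covered by the trivial bound $\dashint_{B_{k_0+\ell}}\abs{V(\nabla h)-\mean{V(\nabla h)}_{B_{k_0+\ell}}}^2\leq c\,\dashint_{B_{k_0}}\abs{V(\nabla h)-\mean{V(\nabla h)}_{B_{k_0}}}^2$ since $\theta_2$ is fixed. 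The main obstacle, and the reason the exponent saturates at $\alpha p/2$ rather than at $1$, is precisely the balance $\theta_2^{\alpha p}\le\tfrac12\theta_2^{2\beta}$: it imposes $2\beta<\alpha p$, whereas the analogous calculation for $A(\nabla h)$ only needed $\beta p'<\alpha p$, that is $\beta<\alpha(p-1)\le 1$.
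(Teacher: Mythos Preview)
Your proposal is correct and follows essentially the method the paper indicates (``the same method can be used to prove decay estimates for~$V(\nabla h)$''), namely transplanting the proof of Theorem~\ref{thm:h-decay} to the $V$-scale with Lemma~\ref{lem:V-decay-nondeg} replacing Proposition~\ref{pro:decay-A-nondeg} in the non-degenerate regime. Your explanation of why the admissible range is $\beta<\alpha p/2$ rather than $\beta<1$ --- the degenerate alternative now requires $\theta_2^{\alpha p}\le\tfrac12\theta_2^{2\beta}$ instead of $\theta_2^{\alpha p}\le\tfrac12\theta_2^{\beta p'}$ --- is exactly the point.

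One small remark: your route through the pointwise bound $\abs{P-Q}^p\lesssim\abs{V(P)-V(Q)}^2$ (valid since $\abs{P}+\abs{Q}\ge\abs{P-Q}$ and $p\ge 2$) together with Lemma~\ref{lem:DKS6.2} to pass from $\dashint_B\abs{\nabla h-\mean{\nabla h}_B}^p$ directly to the $V$-oscillation is slightly cleaner than the corresponding step in the paper's $A$-argument, which instead uses the cruder bound $\dashint_B\abs{\nabla h-\mean{\nabla h}_B}^p\lesssim\dashint_B\abs{\nabla h}^p=\dashint_B\abs{V(\nabla h)}^2$ and then invokes the failure of~\eqref{eq:non-deg-cond-h} on $B$ to recover the oscillation at the cost of an extra $\epsilonDG^{-1}$. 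Your version avoids that factor on the $\theta_2^{\alpha p}$-term and does not actually use the degeneracy of $B$ itself, only of $\theta_1 B$; this is harmless since the dichotomy is only applied on degenerate balls anyway.
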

Note that $\frac{\alpha\, p}{2} < 1$ for all $p >2$. Moreover, the quantity $\frac{\alpha\, p}{2}$ is decreasing in~$p$ and
$\lim_{p \to \infty} \frac{\alpha\, p}{2} = \frac{\sqrt{33}-3}{4}
\approx 0.6861$. It is already mentioned in~\cite{AraTeiUrb17} that it
is possible to improve~$\alpha(p)$ form Lemma~\ref{lem:teixeira} a
tiny bit by using Young's inequality in the proof. However, this
improvement is not enough to raise $\frac{\alpha\, p}{2}$ above~$1$ and
the limit at~$p=\infty$ stays the same. So the $V$-decay is still
strongly sub-linear. However, note that $\alpha > \tfrac{1}{p-1}$ for $p >2$, which is
the reason why we can still derive (almost) optimal decay for
$A(\nabla h)$-oscillations.

Nevertheless, the regularity studies in \cite{Aro88,IwaMan89} of
$p$-harmonic functions in the plane indicate a better regularity
of~$V(\nabla h)$, which would allow a linear decay of the
$V(\nabla h)$-oscillations
for all $1<p<\infty$. This is in contrast to the regularity
of~$\nabla h$ and $A(\nabla h)$. In particular, $\nabla h \in C^1$ is
only possible for $p \leq 2$ and $A(\nabla h) \in C^1$ is only possible
for~$p \geq 2$. However, it seems that $V(\nabla h) \in C^1$ for
all~$p$. We strongly believe that this is
also the natural regularity for higher dimensions and vectorial
solutions. Therefore we raise the following conjecture:
\begin{conjecture}
  \label{conj:VC1}
  For $d,n\in\setN$ and $1 < p < \infty$ let $h \colon \Omega \to \setR^n$ be
  $p$-harmonic on $\Omega \subset \Rd$. Then
  $V(\nabla h) \in C^1(\Omega)$ and we have a linear decay, i.e.,
  \begin{align*}
    \bigg(\dashint_{\theta B} \abs{V(\nabla h) - \mean{V(\nabla
    h)}_{\theta B}}^2\,dx \bigg)^{\frac 12}
    &\leq c\, \theta \bigg(
      \dashint_{B} \abs{V(\nabla h) - \mean{V(\nabla h)}_B}^2\,dx
      \bigg)^{\frac 12}
  \end{align*}
  for all balls $B\subset\Omega$ and every~$\theta \in (0,1]$.
\end{conjecture}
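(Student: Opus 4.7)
The plan is to prove the linear $L^2$-oscillation decay for $V(\nabla h)$ first, since the assertion $V(\nabla h) \in C^1(\Omega)$ then follows from a standard Campanato-type characterization once this decay is combined with a telescoping bound on the oscillations of the averages $\mean{V(\nabla h)}_{B_r(x)}$ in both $r$ and $x$. As in Section~\ref{sec:regul-p-harm} I would split the analysis into a non-degenerate and a degenerate regime. In the non-degenerate one, where $V(\nabla h)$ is already close to a nonzero constant on~$B$, the equation linearizes to a constant-coefficient uniformly elliptic system for $V(\nabla h)$ after a change of variables via the isomorphism~$V$, so classical Schauder theory supplies arbitrary polynomial decay for the linearization, and one only has to track the perturbation error, in the spirit of~\cite[Proposition~28]{DieLenStrVer12}. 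Pushing the exponent to the borderline value~$\beta=1$ then amounts to careful bookkeeping of this error, since the geometric sum in the iteration of Lemma~\ref{lem:iterative} is still finite at this endpoint as long as the linearized problem itself produces full linear decay.

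The genuine obstacle lies in the degenerate regime, and this is precisely why the assertion is posed as a conjecture rather than a theorem. The alternatives argument of Proposition~\ref{pro:deg-alternatives}, which carried the proof of Theorem~\ref{thm:h-decay} for $A(\nabla h)$ in the plane, rests on the quasi-conformal H\"older bound of Lemma~\ref{lem:teixeira} with exponent $\alpha(p)<1$; to reach the linear exponent~$\beta=1$ for $V(\nabla h)$ via the same route one would essentially need an $L^\infty$-Lipschitz bound on~$\nabla h$ itself, which is generally false for $p \neq 2$ (indeed, only $A(\nabla h)\in C^1$ for $p\geq 2$ and only $\nabla h\in C^1$ for $p\leq 2$). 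Furthermore, for $d\geq 3$ or vectorial systems the planar quasi-conformal machinery of~\cite{BaeKov05,AraTeiUrb17} is unavailable, and the hodograph duality of Subsection~\ref{ssec:duality} is confined to the scalar two-dimensional case, so neither shortcut can be adapted directly.

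A plausible line of attack, at least heuristically, is a Bernstein-type identity for $\abs{V(\nabla h)}^2$. Formally differentiating $-\divergence A(\nabla h)=0$ and contracting with~$V$ suggests that $V(\nabla h)$ satisfies a uniformly elliptic second-order equation even across the degenerate set $\{\nabla h=0\}$, while $\nabla h$ and $A(\nabla h)$ individually do not; this is the natural structural explanation for the belief that $V(\nabla h)$ should be the \emph{correct} $C^1$-object, uniformly in $1<p<\infty$ and in any dimension. Turning this formal computation into a rigorous Caccioppoli inequality for $\nabla V(\nabla h)$, with constants uniform in~$p$, and then running Moser iteration in the $V$-variable, would simultaneously deliver the linear oscillation decay and the $C^1$-regularity. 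The technical crux, and the main obstacle, is to identify the correct weight that keeps such an estimate uniform both in~$p$ and across the singular set of~$\nabla h$; absent such a uniform Bernstein identity, the conjecture appears to lie beyond the techniques developed in this paper, which ultimately rely on two-dimensional quasi-conformal tools.
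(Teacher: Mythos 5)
This statement is labeled as a \emph{conjecture} in the paper, and the paper offers no proof of it; it is posed as an open problem in Subsection~\ref{ssec:open-problem}, with only a brief heuristic justification (namely that the planar regularity results of Aronsson and Iwaniec--Manfredi suggest $V(\nabla h)$ is more regular than $\nabla h$ or $A(\nabla h)$, and that the assertion would subsume the $p'$-conjecture). You correctly recognize this and do not manufacture a proof; that is the honest and appropriate response here.

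Your discussion of the obstacles is accurate and in several places goes beyond what the paper explicitly says. The split into a non-degenerate regime (where linearization and Schauder theory should in principle yield linear decay) and a degenerate regime (where the real difficulty lies) mirrors the structure of the paper's proof of Theorem~\ref{thm:h-decay}, and you rightly identify that the ingredients used there---the quasi-conformal H\"older bound of Lemma~\ref{lem:teixeira} and the hodograph duality of Subsection~\ref{ssec:duality}---are intrinsically two-dimensional and scalar, hence cannot be used to attack the conjecture in the generality stated ($d\geq 2$, systems). You also correctly note that even in the plane those tools stop short of the endpoint $\beta=1$. The Bernstein-type heuristic for $\abs{V(\nabla h)}^2$ is not discussed in the paper, but it is a reasonable structural explanation for why $V(\nabla h)$ rather than $\nabla h$ or $A(\nabla h)$ should be the natural $C^1$-object uniformly in $p$, and it is consistent with the paper's stated belief. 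One small remark: the paper's case for the conjecture leans more heavily on the known sharp planar regularity exponents from \cite{Aro88,IwaMan89} (and the observation that $V(\nabla h)\in C^1$ would unify the two regimes $p\le 2$ and $p\ge 2$) than on any formal Bernstein computation; citing that evidence would complement your heuristic. But as an assessment of the status and difficulty of the conjecture, your account is correct.
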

Note that $V(\nabla h) \in C^1$ immediately implies that
$A(\nabla h) \in C^1$ for $p \geq 2$ and $\nabla h \in C^1$ for
$p \leq 2$. In particular, for $p \geq 2$ it follows that
$\nabla h \in C^{1,\frac{1}{p-1}}$ and therefore $h \in C^{p'}$ (in
the sense of H\"older spaces). Thus the
conjecture is stronger than the well known $p'$-conjecture,
see~\cite{AraTeiUrb17}. In addition, an almost linear decay of the $V(\nabla h)$-oscillations would
simplify a few steps in Section~\ref{sec:oscill-estim} below.

\section{Oscillation Estimates}
\label{sec:oscill-estim}
In this section we will derive decay estimates for oscillations of the
flux~$A(\nabla u)$. These will be crucial later in deriving
\Calderon{}-Zygmund type estimates for~$A(\nabla u)$ in the scale of
Besov or Triebel-Lizorkin spaces. The goal of this section is the
proof of the following estimate.

\begin{theorem}
  \label{thm:osc-estimates}
  Let~$2 \leq p < \infty$ and $\Omega\subset\setR^2$. For given $F \in L^{p'}(\Omega)$
  let $u \in W^{1,p}(\Omega)$ be a (scalar) weak solution to
  \begin{align}\label{eq:plap2}
    -\divergence(A(\nabla u)) &= -\divergence F \qquad \text{in $\Omega$}.
  \end{align}
  Then for
  all $\beta \in (0,1)$ there exists~$\theta_0 \in (0,1)$
  and~$c=c(\beta,\theta_0)>0$ such that for all balls~$B \subset \Omega$ there holds
  \begin{align*}
    &\bigg(\dashint_{\theta_0 B} \abs{A(\nabla u) - \mean{A(\nabla u)}_{\theta_0 B}}^{p'} \,dx \bigg)^{\frac 1{p'}} \\
     &\qquad \leq \theta_0^\beta \bigg(
     \dashint_{B} \abs{A(\nabla u) - \mean{A(\nabla u)}_{
       B}}^{p'} \,dx \bigg)^{\frac 1{p'}}
 + c\, \bigg(
     \dashint_{B} \abs{F - \mean{F}_{
       B}}^{p'} \,dx \bigg)^{\frac 1{p'}}.
  \end{align*}
\end{theorem}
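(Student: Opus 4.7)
The plan is the classical \(p\)-harmonic approximation argument. On the fixed ball \(B\), I would let \(h \in u + W^{1,p}_0(B)\) be the \(p\)-harmonic replacement of \(u\). Testing the weak formulations for \(u\) and \(h\) against \(v := u - h\) and noting that the constant \(\mean{F}_B\) drops out against \(\nabla v\), I obtain
\begin{align*}
  \int_B (A(\nabla u) - A(\nabla h))\cdot (\nabla u - \nabla h)\,dx = \int_B (F - \mean{F}_B)\cdot (\nabla u - \nabla h)\,dx.
\end{align*}
By Lemma~\ref{lem:hammer} the left-hand side is comparable to \(\int_B \abs{V(\nabla u) - V(\nabla h)}^2\,dx\), which for \(p \geq 2\) dominates \(\int_B \abs{\nabla u - \nabla h}^p\,dx\). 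H\"older's inequality on the right, followed by absorption, then yields the cornerstone comparison estimate
\begin{align*}
  \dashint_B \abs{V(\nabla u) - V(\nabla h)}^2\,dx \lesssim \dashint_B \abs{F - \mean{F}_B}^{p'}\,dx.
\end{align*}

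Second, I would apply Proposition~\ref{pro:h-decay-pprime} to \(h\) with a decay exponent \(\tilde\beta \in (\beta,1)\), and use the triangle inequality, Lemma~\ref{lem:osc-aux} and Jensen's inequality to pass from the oscillation of \(A(\nabla h)\) on \(\theta B\) back to the oscillation of \(A(\nabla u)\) on \(B\). Rescaling the comparison integrals from \(\theta B\) to \(B\) produces a factor \(\theta^{-2/p'}\), which gives an intermediate inequality of the form
\begin{align*}
  &\biggl(\dashint_{\theta B}\abs{A(\nabla u) - \mean{A(\nabla u)}_{\theta B}}^{p'}\biggr)^{1/p'}
  \\&\quad\lesssim \theta^{\tilde\beta}\biggl(\dashint_B \abs{A(\nabla u) - \mean{A(\nabla u)}_B}^{p'}\biggr)^{1/p'} + \theta^{-2/p'}\biggl(\dashint_B \abs{A(\nabla u) - A(\nabla h)}^{p'}\biggr)^{1/p'}.
\end{align*}

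The decisive technical step is therefore to upgrade the \(V\)-level comparison to an \(L^{p'}\)-bound for \(A(\nabla u) - A(\nabla h)\). My plan is to apply Lemma~\ref{lem:phi-ast-inv-hoelder} with the constant shift \(Q^{\ast}\) defined by \(A(Q^{\ast}) := \mean{A(\nabla u)}_B\), which reduces the task to estimating \(\dashint_B \phi^*_{\abs{A(Q^{\ast})}}(\abs{A(\nabla u) - A(\nabla h)})\,dx\). To exchange the constant shift for the pointwise one \(\abs{A(\nabla h)}\) visible in Lemma~\ref{lem:hammer}, I use the shift-change Lemma~\ref{lem:shift-change} with some \(\lambda \in (0,1]\): the main integral becomes \(\int \abs{V(\nabla u)-V(\nabla h)}^2\), already controlled by the comparison estimate, while the error \(\lambda\int \abs{V(Q^{\ast}) - V(\nabla h)}^2\) is split via the triangle inequality and handled using Lemma~\ref{lem:DKS6.2} together with \(\phi^*_a(t) \lesssim t^{p'}\); the ``\(V(Q^\ast)-V(\nabla u)\)'' part produces an oscillation of \(A(\nabla u)\), the ``\(V(\nabla u)-V(\nabla h)\)'' part is re-absorbed into the \(F\)-term. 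Inverting the shifted \(N\)-function and using its doubling then delivers
\begin{align*}
  \biggl(\dashint_B \abs{A(\nabla u)-A(\nabla h)}^{p'}\biggr)^{\!1/p'}
  &\lesssim C(\lambda)\biggl(\dashint_B\abs{F-\mean{F}_B}^{p'}\biggr)^{\!1/p'}
  \\&\quad + c(\lambda)\biggl(\dashint_B\abs{A(\nabla u)-\mean{A(\nabla u)}_B}^{p'}\biggr)^{\!1/p'},
\end{align*}
where \(c(\lambda) \to 0\) and \(C(\lambda) \to \infty\) as \(\lambda \to 0\).

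The proof is completed by absorption. Given \(\beta \in (0,1)\), I would first pick \(\tilde\beta \in (\beta,1)\), then choose \(\lambda \in (0,1]\) small and finally \(\theta_0 \in (0,1)\) so small that \(\theta_0^{\tilde\beta} + \theta_0^{-2/p'}c(\lambda) \leq \theta_0^\beta\); the remaining prefactor \(\theta_0^{-2/p'}C(\lambda)\) of the \(F\)-term is then a fixed constant \(c = c(\beta,\theta_0)\). The main obstacle is precisely this multi-parameter balancing: the shift-change constant \(C(\lambda) \eqsim \lambda^{1-p}\) diverges exactly where the absorption requires \(\lambda\) to be small, and the rescaling factor \(\theta^{-2/p'}\) prevents \(\theta\) from being taken too small. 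A feasible triple \((\tilde\beta,\lambda,\theta_0)\) nevertheless exists for every \(\beta < 1\) thanks to the almost-linear decay rate furnished by Proposition~\ref{pro:h-decay-pprime}.
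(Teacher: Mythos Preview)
Your decisive step---upgrading the comparison to an \(L^{p'}\) bound on \(A(\nabla u)-A(\nabla h)\) via Lemma~\ref{lem:phi-ast-inv-hoelder} with shift \(a:=\abs{A(Q^*)}=\abs{\mean{A(\nabla u)}_B}\)---does not close. After the shift change you correctly obtain
\[
  \dashint_B \phi^*_{a}\bigl(\abs{A(\nabla u)-A(\nabla h)}\bigr)\,dx
  \;\lesssim\; \lambda^{1-p}\,F_{\mathrm{osc}}^{p'} + \lambda\,A_{\mathrm{osc}}^{p'},
\]
but then you have to apply \((\phi^*_{a})^{-1}\). For \(p>2\) and arguments \(s\le a^{p'}\) one has \((\phi^*_{a})^{-1}(s)\eqsim a^{(2-p')/2}s^{1/2}\), so the term \((\phi^*_{a})^{-1}(\lambda A_{\mathrm{osc}}^{p'})\) carries an uncontrolled factor \(a^{(2-p')/2}=\abs{\mean{A(\nabla u)}_B}^{(2-p')/2}\). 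This is the \emph{mean} of \(A(\nabla u)\), not its oscillation, and it can be arbitrarily large compared to \(A_{\mathrm{osc}}\) (exactly the ``non-degenerate'' situation). Hence the promised bound with a factor \(c(\lambda)\to 0\) in front of \(A_{\mathrm{osc}}\) alone, uniform in \(u\), is false; no choice of \((\tilde\beta,\lambda,\theta_0)\) repairs this.

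This is precisely why the paper does \emph{not} rely on a single \(p\)-harmonic comparison. It splits into two regimes. In the degenerate case (Proposition~\ref{pro:decay-u-deg}) the shift is taken at \(0\) rather than at \(Q^*\); the resulting error \(\lambda\!\int\!\abs{V(\nabla u)}^2\) is then controlled by the oscillation \emph{only} because the degeneracy assumption \(\int\abs{V(\nabla u)}^2<\epsilonDG^{-1}\int\abs{V(\nabla u)-\mean{V(\nabla u)}_B}^2\) is in force. In the non-degenerate case (Proposition~\ref{pro:main-A-decay-nondeg}) the \(p\)-harmonic comparison is abandoned altogether in favour of a linear comparison with constant coefficients \((DA)(\mean{\nabla u}_B^A)\), together with the ``bad set'' estimate of Lemma~\ref{lem:decay-A-subset}. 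Your argument is essentially the degenerate-case argument, but without the degeneracy hypothesis it leaks the factor \(\abs{\mean{A(\nabla u)}_B}\) and cannot be absorbed.
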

\begin{remark}
  \label{rem:osc-est-Rd}
  In the case of higher dimensions and vectorial solutions we get the
  same oscillation decay estimate but with~$\beta$ restricted
  to~$\beta \in (0, \beta_0)$, where~$\beta_0 \in(0,1)$ is some (unknown) small number. The reason is the worse decay estimate for
  $p$-harmonic functions in this more general situation, see
  Remark~\ref{rem:h-decay-Rd}. In fact, our oscillation estimates hold
  exactly in the same range as the decay estimate for $p$-harmonic
  functions.
\end{remark}
Theorem~\ref{thm:osc-estimates} shows that the oscillation
of~$A(\nabla u)$ decreases for some fixed(!) reduction of the
radius by a factor of~$\theta_0$. We can iterate the estimate to obtain an
oscillation decay for arbitrary reductions~$\theta \in
(0,1)$. However, to formulate this it is useful to introduce the
following short notations on oscillations.

Let $B_t(x)$ denote the ball of radius $t>0$ centered at $x$. Then for
$g \in L^w_{\loc}(\Rd)$, $w\geq 1$, we define its (zero order)
oscillation by
\begin{align*}
  \osc_wg(x,t) &:= \bigg( \dashint_{B_t(x)} \abs{g(y) - \mean{g}_{B_t(x)}}^w \,dy
                \bigg)^{\frac 1w}.
\end{align*}
Note that in this definition it is possible to replace the mean by an infimum over all constants, which gives rise to an equivalent expression, see Lemma~\ref{lem:osc-aux}.

\begin{theorem}
  \label{thm:osc-estimates-general}
  Let $u$, $p$, $F$, and $\beta$ be as in
  Theorem~\ref{thm:osc-estimates}. Then there exists~$c>0$ such that
  for all~$\theta \in (0,1)$ and all balls~$B_t(x) \subset \Omega$
  there holds
  \begin{align*}
    \osc_{p'} A(\nabla u)(x,\theta t) \leq c\,\theta^\beta
    \osc_{p'} A(\nabla u)(x,t) + c\, \theta^\beta \int_{\theta}^1
    \lambda^{-\beta}  \osc_{p'} F(x, \lambda t) \frac{d\lambda}{\lambda}.
  \end{align*}
\end{theorem}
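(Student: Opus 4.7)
The plan is to iterate Theorem~\ref{thm:osc-estimates} along the geometric sequence of radii $\theta_0^k t$ and then pass from this discrete scale to an arbitrary $\theta \in (0,1)$ by exploiting the fact that the oscillation functional is comparable under bounded changes of the ball. With $\theta_0$ and the constant $c_0$ furnished by Theorem~\ref{thm:osc-estimates} for the given $\beta$, I would set $a_k := \osc_{p'} A(\nabla u)(x,\theta_0^k t)$ and $b_k := \osc_{p'} F(x,\theta_0^k t)$. Applying Theorem~\ref{thm:osc-estimates} to each ball $B_{\theta_0^k t}(x) \subset \Omega$ gives the recursion $a_{k+1} \leq \theta_0^\beta a_k + c_0\, b_k$, and a straightforward induction on $k$ yields
\begin{align*}
  a_k &\leq \theta_0^{k\beta}\, a_0 + c_0 \sum_{j=0}^{k-1} \theta_0^{(k-1-j)\beta}\, b_j, \qquad k \in \setN_0.
\end{align*}

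For a general $\theta \in (0,1)$ I would pick the unique $k \in \setN_0$ with $\theta_0^{k+1} \leq \theta < \theta_0^k$. Then $B_{\theta t}(x) \subset B_{\theta_0^k t}(x)$ with volume ratio at most $\theta_0^{-2}$, so applying Lemma~\ref{lem:osc-aux} to both balls against the reference constant $\mean{A(\nabla u)}_{B_{\theta_0^k t}(x)}$ gives $\osc_{p'} A(\nabla u)(x,\theta t) \leq c_1\, a_k$ with $c_1 = c_1(\theta_0,p)$. Together with $\theta_0^{k\beta} \leq \theta_0^{-\beta}\, \theta^\beta$ this converts the preceding estimate into
\begin{align*}
  \osc_{p'} A(\nabla u)(x,\theta t)
  &\leq c\, \theta^\beta\, \osc_{p'} A(\nabla u)(x, t) + c \sum_{j=0}^{k-1} \theta_0^{(k-1-j)\beta}\, b_j,
\end{align*}
which already accounts for the first term on the right-hand side of the claim.

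It remains to transform the discrete sum over $F$-oscillations into a continuous integral. Applying the same near-monotonicity argument to $F$ (now comparing $B_{\lambda t}(x)$ to the slightly larger ball $B_{\theta_0^j t}(x)$) yields $b_j \leq c_2\, \osc_{p'} F(x, \lambda t)$ for every $\lambda \in [\theta_0^{j+1}, \theta_0^j]$, while $\int_{\theta_0^{j+1}}^{\theta_0^j} \lambda^{-\beta-1}\, d\lambda = \frac{\theta_0^{-\beta}-1}{\beta}\, \theta_0^{-j\beta}$ is comparable to $\theta_0^{-j\beta}$ with constants depending only on $\theta_0, \beta$. Multiplying and summing over $j = 0, \ldots, k-1$ gives
\begin{align*}
  \sum_{j=0}^{k-1} \theta_0^{-j\beta}\, b_j
  &\leq c \int_{\theta_0^k}^1 \lambda^{-\beta}\, \osc_{p'} F(x, \lambda t) \frac{d\lambda}{\lambda}
  \leq c \int_\theta^1 \lambda^{-\beta}\, \osc_{p'} F(x, \lambda t) \frac{d\lambda}{\lambda},
\end{align*}
and multiplying through by $\theta_0^{k\beta} \eqsim \theta^\beta$ turns this into the desired bound on $\sum_{j=0}^{k-1} \theta_0^{(k-1-j)\beta}\, b_j$, finishing the proof.

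The entire argument is a bookkeeping/summation exercise on top of Theorem~\ref{thm:osc-estimates}; no new nonlinear estimate for $u$ is required. The only subtle point is to make sure that every multiplicative constant introduced by the dyadic-to-continuous comparison and by the near-monotonicity of $\osc_{p'}$ depends solely on the fixed parameters $p$, $\beta$, and $\theta_0$, so that the final constant $c$ is indeed independent of $\theta$, $t$, $x$ and of the specific solution $u$.
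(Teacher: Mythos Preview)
Your overall strategy is exactly the one the paper uses: iterate Theorem~\ref{thm:osc-estimates} to get the discrete estimate~\eqref{eq:osc-est-gen-1}, then pass to arbitrary~$\theta$ via the comparability~\eqref{eq:osc-est-gen-2} and replace the geometric sum by the integral. There is, however, one step that fails as written.

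You claim that ``comparing $B_{\lambda t}(x)$ to the slightly larger ball $B_{\theta_0^j t}(x)$'' gives $b_j \leq c_2\,\osc_{p'}F(x,\lambda t)$ for every $\lambda \in [\theta_0^{j+1},\theta_0^j]$. This is the wrong direction: the only general near-monotonicity is that the oscillation over a \emph{smaller} ball is controlled by the oscillation over a comparable \emph{larger} one (this is precisely what \eqref{eq:osc-est-gen-2} says, and what your own argument for $A(\nabla u)$ uses). A function that is constant on $B_{\lambda t}(x)$ but not on $B_{\theta_0^j t}(x)$ gives an immediate counterexample to your inequality. The fix is to shift the comparison interval upward: for $j\geq 1$ and $\lambda\in[\theta_0^{j},\theta_0^{j-1}]$ one has $\theta_0^j t\leq\lambda t$ with bounded ratio, so the correct-direction estimate yields $b_j\lesssim\osc_{p'}F(x,\lambda t)$, and then
\[
\theta_0^{-j\beta}\,b_j \;\lesssim\; \int_{\theta_0^{j}}^{\theta_0^{j-1}}\lambda^{-\beta}\,\osc_{p'}F(x,\lambda t)\,\frac{d\lambda}{\lambda},
\qquad j=1,\dots,k-1.
\]
This leaves the $j=0$ contribution $\theta_0^{(k-1)\beta}b_0$ unaccounted for, and it cannot in general be absorbed into the integral (take $F$ supported in a thin annulus near $\partial B_t(x)$). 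The cleanest way to dispose of it is to start the iteration one step later: use the trivial bound $a_1\lesssim a_0$ from near-monotonicity in place of the first application of Theorem~\ref{thm:osc-estimates}, so that only $b_1,\dots,b_{k-1}$ enter the sum. Since $\theta_0^{(k-1)\beta}\eqsim\theta^\beta$, the resulting coefficient in front of $a_0$ is still $\eqsim\theta^\beta$, and the rest of your argument goes through unchanged.
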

Both theorems will be proven in
Subsection~\ref{ssec:proof-theorem-osc-est}. 

\subsection{Non-linear comparison}
\label{ssec:non-line-comp}

In the proof of our oscillation estimates, we will need to compare the
function~$u$ locally with the $p$-harmonic function~$h$ that solves
\begin{align}
  \label{eq:nolin-comparison}
  \begin{alignedat}{2}
    -\divergence\big(A(\nabla h)\big) &= 0 &\qquad&\text{in~$B$},
    \\
    h &= u &&\text{on~$\partial B$}.
  \end{alignedat}
\end{align}
The basic idea is to transfer the decay estimate of~$V(\nabla h)$
 to~$V(\nabla u)$, resp.\ $A(\nabla h)$ to $A(\nabla u)$, by using
the following comparison result.
\begin{lemma}[Non-linear comparison]
  \label{lem:nonlin-comparison-V}
  Let~$h$ be the solution of~\eqref{eq:nolin-comparison}. Then
  \begin{align*}
    \dashint_B \abs{V(\nabla u) - V(\nabla h)}^2\,dx
    &\lesssim 
      \dashint_B \phi^*_{\abs{A(\nabla u)}}(\abs{F-\mean{F}_B})\,dx.
  \end{align*}
\end{lemma}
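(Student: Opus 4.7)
The plan is a standard ``testing with $u-h$'' argument, followed by the shifted Young inequality with the correct choice of shift. First I would use the fact that $h-u \in W^{1,p}_0(B)$, since $h=u$ on $\partial B$ in the trace sense. This makes $u-h$ an admissible test function in both the weak formulation of \eqref{eq:plap2} on $B$ and the $p$-harmonic equation \eqref{eq:nolin-comparison} for $h$. Subtracting the two identities yields
\begin{equation*}
  \int_B \bigl(A(\nabla u) - A(\nabla h)\bigr) \cdot \nabla(u-h) \,dx
  = \int_B F \cdot \nabla(u-h)\,dx.
\end{equation*}

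Next I would rewrite the right-hand side. Since $u-h$ vanishes on $\partial B$, the constant $\mean{F}_B$ can be subtracted without changing the integral, giving
\begin{equation*}
  \int_B \bigl(A(\nabla u) - A(\nabla h)\bigr)\cdot \nabla(u-h)\,dx
  = \int_B (F - \mean{F}_B) \cdot \nabla(u-h)\,dx.
\end{equation*}
For the left-hand side I invoke the monotonicity equivalence of Lemma~\ref{lem:hammer} with $P=\nabla u$, $Q=\nabla h$ to get
\begin{equation*}
  \bigl(A(\nabla u)-A(\nabla h)\bigr)\cdot(\nabla u -\nabla h)
  \eqsim \abs{V(\nabla u)-V(\nabla h)}^2
  \eqsim \phi_{\abs{\nabla u}}\bigl(\abs{\nabla u -\nabla h}\bigr).
\end{equation*}

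For the right-hand side I apply the shifted Young inequality from Subsection~\ref{ssec:shift-orlicz-funct} with shift $a=\abs{\nabla u}$, exploiting the identity $(\phi_a)^* = \phi^*_{\phi'(a)}$ together with $\phi'(\abs{\nabla u}) = \abs{\nabla u}^{p-1} = \abs{A(\nabla u)}$. This yields, for any $\delta>0$,
\begin{equation*}
  \abs{F-\mean{F}_B}\,\abs{\nabla(u-h)}
  \le \delta\,\phi_{\abs{\nabla u}}\bigl(\abs{\nabla(u-h)}\bigr)
  + c_\delta\,\phi^*_{\abs{A(\nabla u)}}\bigl(\abs{F-\mean{F}_B}\bigr).
\end{equation*}
Integrating over $B$ and combining with the monotonicity bound, I absorb the first term on the right into the left-hand side by choosing $\delta$ sufficiently small. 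Dividing by $\abs{B}$ yields the claimed inequality.

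The only genuinely delicate point is the choice of shift: one must put $a=\abs{\nabla u}$ (rather than $\abs{\nabla h}$) so that the conjugate shift becomes $\phi'(a) = \abs{A(\nabla u)}$, matching the right-hand side of the statement; and one must know that $\phi_{\abs{\nabla u}}(\abs{\nabla u -\nabla h}) \eqsim \abs{V(\nabla u)-V(\nabla h)}^2$, which is exactly the content of Lemma~\ref{lem:hammer}. Everything else is routine.
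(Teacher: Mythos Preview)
Your proof is correct and follows essentially the same approach as the paper: test the difference of the two equations with $u-h$, subtract the constant $\mean{F}_B$ (allowed since $u-h\in W^{1,p}_0(B)$), apply the shifted Young inequality with shift $a=\abs{\nabla u}$ and the identity $(\phi_a)^*=\phi^*_{\phi'(a)}=\phi^*_{\abs{A(\nabla u)}}$, and absorb via Lemma~\ref{lem:hammer}. Your explicit justification for the choice of shift is in fact slightly more detailed than the paper's version.
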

\begin{proof}
  We take take the difference of the equations for~$u$ and~$h$ and test
  it with $u-h$ scaled by~$\abs{B}^{-1}$. So for arbitrarily small $\delta>0$ we obtain 
  \begin{align*}
	\int_B \abs{V(\nabla u) - V(\nabla h)}^2\,dx 
	& \lesssim \int_B \big( A(\nabla u) - A(\nabla h) \big) \cdot \big( \nabla u - \nabla h\big) dx \\	
	& \leq \int_B \abs{ F-\mean{F}_B}\, \abs{\nabla u - \nabla h} dx \\
	& \leq c_\delta\,\int_B \phi^*_{\abs{A(\nabla
		u)}}(\abs{F-\mean{F}_B})\,dx + \delta\,
\int_B \phi_{\abs{\nabla u}}(\abs{\nabla u - \nabla h})\,dx
\\
	& \lesssim c_\delta\,\int_B \phi^*_{\abs{A(\nabla
		u)}}(\abs{F-\mean{F}_B})\,dx + \delta\,
\int_B \abs{V(\nabla u) - V(\nabla h)}^2\,dx,
\end{align*}
  where we have used Lemma~\ref{lem:hammer}, as well as
  $(\phi_{\abs{\nabla u}})^* \eqsim \phi^*_{\abs{A(\nabla u)}}$. 
  Now we absorb the last integral to prove the
  claim.
\end{proof}

In the following we need an estimate of reverse H\"older
type from~\cite[Corollary~2.4]{BreCiaDieKuuSch17} which is also
contained in the proof of~\cite[Corollary~3.5]{DieKapSch11}.
\begin{lemma}[{\cite[Corollary~2.4]{BreCiaDieKuuSch17}}]
  \label{lem:reverse-hoelder}
  Let $u$ solve~\eqref{eq:plap2}. Then for all vectors $Q$ we have
  \begin{align*}
    &\dashint_{B} \abs{V(\nabla u) - V(Q)}^2\,dx 
    \\
    &\qquad 
   \leq \phi^*_{\abs{A(Q)}} \bigg( \dashint_{2B} \abs{A(\nabla u) - A(Q)}\,dx \bigg)
    + c\, \dashint_{2B}
      \phi^*_{\abs{A(Q)}}(\abs{F - \mean{F}_{2B}})\,dx.
  \end{align*}
\end{lemma}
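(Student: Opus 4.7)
The plan is to follow the Caccioppoli-plus-Sobolev--Poincar\'e route behind Lemma~\ref{lem:rev-hoelder} in the $p$-harmonic case, modified to handle the force term $F$.

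First, using that $A(Q)$ and $\mean{F}_{2B}$ are constants, I would rewrite~\eqref{eq:plap2} on $2B$ as $-\divergence(A(\nabla u)-A(Q)) = -\divergence(F-\mean{F}_{2B})$. Choose a cutoff $\eta \in C_c^\infty(2B)$ with $\eta \equiv 1$ on $B$ and $\abs{\nabla\eta} \lesssim 1/R$ (where $R$ is the radius of~$B$), and a constant $c_0$ so that $\mean{u-\ell}_{2B}=0$ for $\ell(x) := Q\cdot x + c_0$. Testing the equation with $\eta^p(u-\ell)$, the monotonicity $(A(\nabla u)-A(Q))\cdot(\nabla u-Q) \eqsim \abs{V(\nabla u)-V(Q)}^2$ from Lemma~\ref{lem:hammer} dominates the principal term. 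The remaining three contributions (two involving $F-\mean{F}_{2B}$ and one coming from $\nabla(\eta^p)$) are treated via Young's inequality for the conjugate pair $(\phi_{\abs{Q}}, \phi^*_{\abs{A(Q)}})$ --- note $(\phi_{\abs{Q}})^* = \phi^*_{\phi'(\abs{Q})} = \phi^*_{\abs{A(Q)}}$, using $\abs{A(\nabla u) - A(Q)} \eqsim \phi_{\abs{Q}}'(\abs{\nabla u - Q})$ to handle the $A$-factor. Absorbing the resulting $\phi_{\abs{Q}}(\abs{\nabla u - Q}) \eqsim \abs{V(\nabla u)-V(Q)}^2$ terms yields the Caccioppoli-type bound
\begin{align*}
  \dashint_B \abs{V(\nabla u)-V(Q)}^2\,dx
  &\lesssim \dashint_{2B} \phi_{\abs{Q}}\!\left(\frac{\abs{u-\ell}}{R}\right) dx + \dashint_{2B} \phi^*_{\abs{A(Q)}}(\abs{F-\mean{F}_{2B}})\,dx.
\end{align*}

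Second, I would invoke a sub-linear Sobolev--Poincar\'e inequality for the shifted $N$-function $\phi_{\abs{Q}}$ applied to the mean-zero function $u-\ell$: for some $\theta=\theta(d,p)\in(0,1)$,
\begin{align*}
  \dashint_{2B} \phi_{\abs{Q}}\!\left(\frac{\abs{u-\ell}}{R}\right) dx \lesssim \left(\dashint_{2B} \phi_{\abs{Q}}(\abs{\nabla u - Q})^\theta\,dx\right)^{1/\theta}.
\end{align*}
The equivalence $\phi_{\abs{Q}}(\abs{\nabla u - Q}) \eqsim \phi^*_{\abs{A(Q)}}(\abs{A(\nabla u)-A(Q)})$ from Lemma~\ref{lem:hammer} transforms the right-hand side into $\bigl(\dashint_{2B} \phi^*_{\abs{A(Q)}}(\abs{A(\nabla u)-A(Q)})^\theta\,dx\bigr)^{1/\theta}$. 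I would then choose $\theta$ small enough so that $s \mapsto \phi^*_{\abs{A(Q)}}(s)^\theta$ is concave; since $\phi^*_{\abs{A(Q)}}(s) \eqsim \abs{A(Q)}^{2-p} s^2$ for small $s$ and $\eqsim s^{p'}$ for large $s$, any $\theta < 1/\max\{2,p'\}$ works. Jensen's inequality applied to this concave function produces the reverse bound
\begin{align*}
  \left(\dashint_{2B} \phi^*_{\abs{A(Q)}}(\abs{A(\nabla u)-A(Q)})^\theta\,dx\right)^{1/\theta} \lesssim \phi^*_{\abs{A(Q)}}\!\left(\dashint_{2B} \abs{A(\nabla u)-A(Q)}\,dx\right),
\end{align*}
and combining this chain of estimates with the Caccioppoli bound above yields the claim.

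The main obstacle I expect is the application of the sub-linear Sobolev--Poincar\'e inequality with constants that are uniform in the shift parameter $\abs{Q}$. Fortunately, this uniformity follows from the uniform $\Delta_2 \cap \nabla_2$ condition satisfied by the family $\{\phi_a\}_a$ (see the remark after the definition of shifted $N$-functions) and is by now a standard tool in the nonlinear Calder\'on--Zygmund theory developed around~\cite{DieKapSch11}. The Caccioppoli estimate and the Jensen/concavity step are routine once one keeps careful track of the shift and the conjugate pairing.
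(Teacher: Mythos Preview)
The paper does not give its own proof of this lemma; it is quoted verbatim from \cite[Corollary~2.4]{BreCiaDieKuuSch17} (and flagged as implicit in the proof of \cite[Corollary~3.5]{DieKapSch11}). Your sketch follows exactly the Caccioppoli--plus--shifted Sobolev--Poincar\'e--plus--Jensen route that those references use, so there is no genuine difference in strategy to report.

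One point deserves care. You write that the Sobolev--Poincar\'e step produces \emph{some} $\theta=\theta(d,p)\in(0,1)$, and then later that you will ``choose $\theta$ small enough'' so that $s\mapsto \phi^*_{\abs{A(Q)}}(s)^\theta$ is concave. These are two separate constraints and they have to be shown compatible. For $p\ge 2$ the dual shifted function $\phi^*_a$ has upper growth index~$2$ (it behaves like $a^{p'-2}t^2$ near zero and like $t^{p'}$ with $p'\le 2$ at infinity), so concavity of $(\phi^*_a)^\theta$ requires $\theta\le \tfrac12$. On the other side, the sub-linear Sobolev--Poincar\'e for the family $\{\phi_a\}$ (whose \emph{lower} growth index is~$2$) is available for $\theta$ down to this same threshold, essentially because $\phi_a^{1/2}$ still has linear lower growth and one can run the Riesz-potential argument of~\cite{DieEtt08}. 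So the two constraints do meet, but only just, and this should be verified rather than asserted. In the cited references the compatibility is handled precisely via the uniform $\Delta_2\cap\nabla_2$ structure you mention at the end; once that is in place your argument is complete.
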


Now the decay assertion for~$V(\nabla h)$ in Lemma~\ref{lem:V-decay} provides us with a preliminary decay estimate for~$V(\nabla u)$. However note that the decay exponent is far from being optimal. Anyhow, we need this decay estimate to control our final oscillation on a small subset.
\begin{lemma}
  \label{lem:decay-u-non-deg-V}
  Let~$\gamma>0$ be as in Lemma~\ref{lem:V-decay}. Then there exists $c=c(\gamma)>0$ such that we have the following decay
  estimate:
  \begin{align*}
    \lefteqn{\dashint_{\theta B}  \abs{V(\nabla u) - \mean{V(\nabla u)}_{\theta
    B}}^{2}\,dx} \qquad
    &
    \\
    &\leq       c\, \theta^{2\gamma} \dashint_{2B}
      \abs{A(\nabla u)- \mean{A(\nabla u)}_{2B}}^{p'} \,dx +
      c\,\theta^{-d}
      \dashint_{2B} \abs{F-\mean{F}_{2B}}^{p'} \,dx.
  \end{align*}
\end{lemma}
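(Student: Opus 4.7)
The plan is to compare $\nabla u$ locally with the gradient of the $p$-harmonic function $h$ that agrees with $u$ on $\partial B$, i.e., the solution of~\eqref{eq:nolin-comparison}, and then to invoke the $V(\nabla h)$-decay of Lemma~\ref{lem:V-decay} on the $p$-harmonic part while absorbing the error through the non-linear comparison Lemma~\ref{lem:nonlin-comparison-V}. Specifically, after replacing $\mean{V(\nabla u)}_{\theta B}$ by $\mean{V(\nabla h)}_{\theta B}$ via Lemma~\ref{lem:osc-aux}, I would split
\begin{align*}
  \dashint_{\theta B}\abs{V(\nabla u) - \mean{V(\nabla u)}_{\theta B}}^2\,dx
  &\lesssim \dashint_{\theta B}\abs{V(\nabla u) - V(\nabla h)}^2\,dx\\
  &\quad+ \dashint_{\theta B}\abs{V(\nabla h) - \mean{V(\nabla h)}_{\theta B}}^2\,dx.
\end{align*}

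For the first summand I would enlarge the domain of integration from~$\theta B$ to $B$ at the price of~$\theta^{-d}$ and apply Lemma~\ref{lem:nonlin-comparison-V}, followed by Corollary~\ref{cor:hammer-pge2} (which yields $\phi^*_a(t)\le c\,t^{p'}$ for $p\ge 2$) and a standard enlargement from~$B$ to~$2B$, arriving at $c\,\theta^{-d}\dashint_{2B}\abs{F-\mean{F}_{2B}}^{p'}\,dx$. For the second summand, Lemma~\ref{lem:V-decay} produces the factor~$\theta^{2\gamma}$ and reduces matters to estimating $\dashint_B\abs{V(\nabla h)-\mean{V(\nabla h)}_B}^2\,dx$. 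Applying Lemma~\ref{lem:osc-aux} once more and then the triangle inequality, this full-ball oscillation is controlled by $\dashint_B\abs{V(\nabla u)-V(Q)}^2\,dx+\dashint_B\abs{V(\nabla u)-V(\nabla h)}^2\,dx$ for any vector~$Q$.

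The natural choice is $A(Q):=\mean{A(\nabla u)}_{2B}$: the reverse H\"older estimate from Lemma~\ref{lem:reverse-hoelder}, combined once again with $\phi^*_a(t)\le c\,t^{p'}$ and Jensen's inequality, then bounds the first piece by $\dashint_{2B}\abs{A(\nabla u)-\mean{A(\nabla u)}_{2B}}^{p'}\,dx+\dashint_{2B}\abs{F-\mean{F}_{2B}}^{p'}\,dx$, while the second piece is handled exactly as in the first summand above. Collecting everything produces a $\theta^{2\gamma}$-multiple of the desired $A(\nabla u)$-oscillation plus several $F$-contributions; since $\theta^{2\gamma}\le 1\le \theta^{-d}$ for $\theta\in(0,1)$, all $F$-contributions can be absorbed into the $\theta^{-d}$ factor to give the claim. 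I expect the main bookkeeping obstacle to be the correct choice of~$Q$ and the careful handling of the shifted Orlicz functions $\phi^*_{\abs{A(Q)}}$ and $\phi^*_{\abs{A(\nabla u)}}$ when converting into $p'$-powers, where Corollary~\ref{cor:hammer-pge2} is essential.
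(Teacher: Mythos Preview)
Your proposal is correct and follows essentially the same route as the paper: split via the $p$-harmonic comparison function~$h$, apply the $V$-decay of Lemma~\ref{lem:V-decay} to the $h$-oscillation, control the error by the non-linear comparison Lemma~\ref{lem:nonlin-comparison-V} together with $\phi^*_a(t)\le c\,t^{p'}$, and convert the remaining full-ball $V$-oscillation into the $A(\nabla u)$- and $F$-oscillations on~$2B$ via the reverse H\"older estimate Lemma~\ref{lem:reverse-hoelder}. The only cosmetic difference is that the paper takes $A(Q)=\mean{A(\nabla u)}_B$ (passing through Lemma~\ref{lem:DKS6.2}) and then enlarges to~$2B$, whereas you take $A(Q)=\mean{A(\nabla u)}_{2B}$ directly; both choices work equally well.
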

\begin{proof}
  Let $h$ be the solution of~\eqref{eq:nolin-comparison}. We estimate
  \begin{align*}
    I_\theta 
    &:= \dashint_{\theta B}  \abs{V(\nabla u) - \mean{V(\nabla u)}_{\theta B}}^2\,dx  \\
    &\lesssim
      \dashint_{\theta B}  \abs{V(\nabla h) - \mean{V(\nabla h)}_{\theta
      B}}^2 \,dx +
      \dashint_{\theta B}  \abs{V(\nabla u) - V(\nabla h)}^2\,dx
  \end{align*}
  and use the decay estimate for~$V(\nabla h)$, see
  Lemma~\ref{lem:V-decay}, together with $\theta^{2\gamma}<\theta^{-d}$ to conclude that
  \begin{align*}
    I_\theta &\lesssim
        \theta^{2\gamma} \dashint_{B} \abs{V(\nabla h) - \mean{V(\nabla h)}_{B}}^{2}\,dx 
        + \theta^{-d}
        \dashint_{B}  \abs{V(\nabla u) - V(\nabla h)}^{2}\,dx
        \\
    &\lesssim \theta^{2\gamma} \dashint_{B}  \abs{V(\nabla u) - \mean{V(\nabla u)}_{
        B}}^{2}\,dx + \theta^{-d}
      \dashint_{B}  \abs{V(\nabla u) - V(\nabla h)}^{2}\,dx.
  \end{align*}  
  Now, Lemma~\ref{lem:DKS6.2} and Lemma~\ref{lem:nonlin-comparison-V}, imply
  \begin{align*}
	I_\theta 
		&\lesssim \theta^{2\gamma} \dashint_{B} \abs{V(\nabla u) - V \!\left(\mean{\nabla u}_{B}^A \right) }^{2}\,dx + \theta^{-d}
\dashint_B \phi^*_{\abs{A(\nabla u)}}(\abs{F-\mean{F}_{B}})\,dx.
  \end{align*}
  For the first integral we can employ Lemma~\ref{lem:reverse-hoelder} with $Q:=\mean{\nabla u}_{B}^A$ and Corollary~\ref{cor:hammer-pge2} to obtain
  \begin{align*}
	 &\dashint_{B} \abs{V(\nabla u) - V \!\left(\mean{\nabla u}_{B}^A\right)}^{2}\,dx
	  \\ 
	 &\qquad \leq \phi^*_{\abs{ A(Q) }} \bigg( \dashint_{2B} \abs{A(\nabla u)-\mean{A(\nabla u)}_B }\,dx \bigg) 
	 + c \dashint_{2B} \phi^*_{\abs{ A(Q)}}(\abs{F - \mean{F}_{2B}})\,dx \\
	 &\qquad \lesssim \dashint_{2B} \abs{A(\nabla u)-\mean{A(\nabla u)}_B }^{p'}\,dx  
	 + \dashint_{2B} \abs{F - \mean{F}_{2B}}^{p'} \,dx \\
	 &\qquad \lesssim \dashint_{2B} \abs{A(\nabla u)-\mean{A(\nabla u)}_{2B} }^{p'}\,dx + \theta^{-2\gamma-d} \dashint_{2B} \abs{F - \mean{F}_{2B}}^{p'} \,dx.
  \end{align*}
  Similarly the second integral can be estimated by
  \begin{align*}
	\dashint_B \phi^*_{\abs{A(\nabla u)}}(\abs{F-\mean{F}_{B}})\,dx
	&\lesssim \dashint_B \abs{F-\mean{F}_{B}}^{p'} \,dx
	\lesssim \dashint_{2B} \abs{F-\mean{F}_{2B}}^{p'} \,dx.
  \end{align*}
  Hence, combining the last two bounds shows the claim.
\end{proof}

\subsection{Degenerate case}
\label{ssec:degenerate-case-u}

Let us begin with the degenerate case. In particular, we assume
that
\begin{align}
  \label{eq:deg-cond-u}
  \dashint_B \abs{V(\nabla u) - \mean{V(\nabla u)}_B}^2\,dx &> \epsilonDG
  \dashint_B \abs{V(\nabla u)}^2\,dx.
\end{align}
The parameter~$\epsilonDG>0$ is fixed in this section. The specific
value of~$\epsilonDG$ will be determined later by the non-degenerate
case.

We are now prepared to prove the desired $A$-decay estimate.
\begin{proposition}
  \label{pro:decay-u-deg}
  Let $\beta\in (0,1)$. Then there exists a constant~$c=c(\beta)>0$ such that for
  every~$\theta,\epsilonDG \in (0,1)$ we have the following decay
  estimate on balls $B$ with~\eqref{eq:deg-cond-u}:
  \begin{align*}
    \bigg(\dashint_{\theta B}  \abs{A(\nabla u) - \mean{A(\nabla u)}_{\theta B}}^{p'}\,dx\bigg)^{\frac 1{p'}}    
    &\leq c\, \theta^\beta \bigg(\dashint_{2B}
      \abs{A(\nabla u)- \mean{A(\nabla u)}_{2B}}^{p'}\,dx\bigg)^{\frac 1{p'}} \\
    &\quad\qquad + c\,
      \epsilonDG^{-\frac{(p-1)^2}{p}}  \theta^{-(p-1)(\beta+d)}
      \bigg( \dashint_{2B} \abs{F-\mean{F}_{2B}}^{p'}\,dx \bigg)^{\frac 1{p'}}.
  \end{align*}
\end{proposition}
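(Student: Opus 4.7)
The plan is to compare $u$ locally with the $p$-harmonic function $h$ defined by~\eqref{eq:nolin-comparison} on $B$ and to transfer the (almost) linear oscillation decay for $A(\nabla h)$ from Proposition~\ref{pro:h-decay-pprime} to $A(\nabla u)$, picking up an error term driven by $F$. Throughout, set
\[
	\mathcal{O}:=\bigg(\dashint_{2B}\abs{A(\nabla u)-\mean{A(\nabla u)}_{2B}}^{p'}\,dx\bigg)^{\!1/p'},
	\qquad
	\mathcal{F}:=\bigg(\dashint_{2B}\abs{F-\mean{F}_{2B}}^{p'}\,dx\bigg)^{\!1/p'}.
\]

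First I would use Lemma~\ref{lem:osc-aux} and the triangle inequality to split the left-hand side into the $p$-harmonic oscillation $(\dashint_{\theta B}\abs{A(\nabla h)-\mean{A(\nabla h)}_{\theta B}}^{p'}dx)^{1/p'}$ and the nonlinear comparison integral $(\dashint_{\theta B}\abs{A(\nabla u)-A(\nabla h)}^{p'}dx)^{1/p'}$. Proposition~\ref{pro:h-decay-pprime} applied to the first piece produces the factor $\theta^\beta$ and the $L^1$-oscillation of $A(\nabla h)$ on $B$; a further triangle inequality then replaces $A(\nabla h)$ by $A(\nabla u)$ on $B$ at the cost of an additional $L^1$-comparison term, and Jensen's inequality together with Lemma~\ref{lem:osc-aux} dominates the resulting $L^1$-oscillation of $A(\nabla u)$ on $B$ by $\mathcal{O}$.

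The heart of the argument is to estimate the comparison integral. Using the pointwise bound $\abs{A(P)-A(Q)}\lesssim(\abs{P}+\abs{Q})^{p-2}\abs{P-Q}$ (valid for $p\geq 2$) followed by Hölder's inequality with conjugate exponents $r=(p-1)/(p-2)$ and $r'=p-1$ (chosen precisely so that $(p-2)p'r=p=p'r'$), I obtain
\[
	\dashint_{\theta B}\abs{A(\nabla u)-A(\nabla h)}^{p'}dx
	\lesssim \bigg(\dashint_{\theta B}(\abs{\nabla u}+\abs{\nabla h})^{p}\,dx\bigg)^{\!(p-2)/(p-1)}
	\bigg(\dashint_{\theta B}\abs{\nabla u-\nabla h}^{p}\,dx\bigg)^{\!1/(p-1)}.
\]
The pointwise inequality $\abs{P-Q}^p\lesssim \abs{V(P)-V(Q)}^2$ (which holds for $p\geq 2$ since $\abs{P-Q}\leq \abs{P}+\abs{Q}$), together with Lemma~\ref{lem:nonlin-comparison-V} and Corollary~\ref{cor:hammer-pge2}, controls the second factor by $\theta^{-d}\mathcal{F}^{p'}$. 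For the first factor I invoke the minimizing property $\int_B\abs{\nabla h}^p\leq \int_B\abs{\nabla u}^p$ of the $p$-harmonic extension to replace $h$ by $u$, and then use the degeneracy hypothesis~\eqref{eq:deg-cond-u} together with Lemma~\ref{lem:DKS6.2} and Corollary~\ref{cor:hammer-pge2} to bound $\dashint_B\abs{V(\nabla u)}^2\lesssim \epsilonDG^{-1}\mathcal{O}^{p'}$; enlarging from $B$ to $\theta B$ then yields $\dashint_{\theta B}(\abs{\nabla u}+\abs{\nabla h})^p\lesssim \theta^{-d}\epsilonDG^{-1}\mathcal{O}^{p'}$.

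Putting the two factors together and taking $p'$-th roots, the comparison integral is controlled by a product of the form $\theta^{-d/p'}\epsilonDG^{-(p-2)/p}\mathcal{O}^{(p-2)/(p-1)}\mathcal{F}^{1/(p-1)}$. A final application of Young's inequality with the conjugate exponents $(p-1)/(p-2)$ and $p-1$, combined with a suitable splitting of the $\theta$-prefactor via an internal weight $\theta^\alpha$ with $\alpha=\beta(p-2)/(p-1)$, then produces a contribution of the form $\delta\theta^\beta\mathcal{O}$ (with $\delta>0$ small enough to be absorbed into the leading $p$-harmonic term) plus a remainder of the form $c\,\epsilonDG^{-(p-1)^2/p}\theta^{-(p-1)(\beta+d)}\mathcal{F}$. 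The main obstacle is the bookkeeping in this last step: one has to distribute the prefactors $\theta^{-d}$ and $\epsilonDG^{-1}$ between the two Hölder factors and calibrate the internal weight so that, after Young's inequality, the coefficient of $\mathcal{O}$ carries precisely the factor $\theta^\beta$ while the $\mathcal{F}$-coefficient matches the claimed powers of $\epsilonDG$ and $\theta$.
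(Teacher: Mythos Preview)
Your proposal is correct and takes a genuinely different route from the paper. Both arguments start by splitting off the $p$-harmonic oscillation and invoking Proposition~\ref{pro:h-decay-pprime}; the difference lies in how the comparison term $\dashint_{\theta B}\abs{A(\nabla u)-A(\nabla h)}^{p'}\,dx$ is handled. The paper applies the shift-change Lemma~\ref{lem:shift-change} (with $P=0$, $Q=\nabla u$) to obtain
\[
  R \lesssim \lambda \dashint_B \abs{V(\nabla u)}^2\,dx + \lambda^{1-p} \dashint_B \phi^*_{\abs{A(\nabla u)}}\bigl(\abs{A(\nabla u)-A(\nabla h)}\bigr)\,dx,
\]
and then optimizes in the free parameter~$\lambda$. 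You instead use the pointwise bound $\abs{A(P)-A(Q)}\lesssim(\abs{P}+\abs{Q})^{p-2}\abs{P-Q}$, H\"older's inequality with the specific exponents $(p-1)/(p-2)$ and $p-1$, the minimizing property of the $p$-harmonic extension, and finally Young's inequality. Your approach is somewhat more elementary in that it bypasses the shifted-N-function calculus; in fact the powers of $\epsilonDG$ and $\theta$ it produces ($\epsilonDG^{-(p-2)(p-1)/p}$ and $\theta^{-d(p-1)/p'-\beta(p-2)}$) are strictly better than those in the stated inequality, which is harmless for an upper bound. The paper's shift-change argument, on the other hand, fits naturally into the Orlicz framework used throughout and would generalize directly to operators with more general $\phi$-growth. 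Two minor remarks: the word ``absorbed'' in your final step is a slight misnomer, since the $\delta\,\theta^\beta\mathcal{O}$ contribution is simply added to the existing $\theta^\beta\mathcal{O}$ term rather than cancelled; and the H\"older exponents degenerate at $p=2$, but in that case $A$ is linear and the estimate is immediate.
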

\begin{proof}
  Let $h$ be the solution of~\eqref{eq:nolin-comparison}. Similar to the proof of Lemma~\ref{lem:decay-u-non-deg-V} we estimate
  \begin{align*}
    I_\theta &:= \dashint_{\theta B}  \abs{A(\nabla u) - \mean{A(\nabla u)}_{\theta
    B}}^{p'}\,dx \\
    &\lesssim \dashint_{\theta B}  \abs{A(\nabla h) - \mean{A(\nabla h)}_{\theta
      B}}^{p'}\,dx 
  + \dashint_{\theta B}  \abs{A(\nabla u) - A(\nabla h)}^{p'}\,dx,
  \end{align*}
  where this time the decay estimate for~$A(\nabla h)$, see Proposition~\ref{pro:h-decay-pprime}, implies
  \begin{align*}
    I_\theta &\lesssim \theta^{p'\beta} \dashint_{B}  \abs{A(\nabla h) - \mean{A(\nabla h)}_{B}}^{p'}\,dx + \theta^{-d} \dashint_{B}  \abs{A(\nabla u) - A(\nabla h)}^{p'}\,dx.
  \end{align*}
  Now we shall show that the second integral is bounded by
  \begin{align*}
  R &:= \dashint_{B}  \abs{A(\nabla u) - A(\nabla h)}^{p'}\,dx \nonumber\\
  &\lesssim \theta^{p'\beta+d} \dashint_B \abs{A(\nabla u)- \mean{A(\nabla u)}_B}^{p'}\,dx
  + \epsilonDG^{1-p}\,  \theta^{- p\beta+d(1-p)} \dashint_B \abs{F-\mean{F}_B}^{p'}\,dx, 
  \end{align*}	  
  because then the claim follows by replacing $B$ by $2B$ in all occurring averages.
  To this end, we employ a shift-change (Lemma~\ref{lem:shift-change} applied to $P=0$ and $Q=\nabla u$), which shows that for~$\lambda \in (0,1]$ (to be specified later) there holds
  \begin{align*}
    R 
    &\lesssim \dashint_{B}  \phi^*_{0}\big(\abs{A(\nabla u) - A(\nabla h)}\big) \,dx \\
    &\lesssim \lambda \dashint_B
    \abs{V(\nabla u)}^2\,dx 
    +\lambda^{1-p} \dashint_{B}  \phi^*_{\abs{A(\nabla u)}}\big(\abs{A(\nabla u) - A(\nabla h)}\big) \,dx. 
  \end{align*}
Here the first integral can be bounded by using the degeneracy condition~\eqref{eq:deg-cond-u}, Lemmata~\ref{lem:hammer} and \ref{lem:DKS6.2}, as well as Corollary~\ref{cor:hammer-pge2} (using~$p\geq 2$) which gives
\begin{align*}
	\dashint_B \abs{V(\nabla u)}^2\,dx
	&< \epsilonDG^{-1} \dashint_B \abs{V(\nabla u)- \mean{V(\nabla u)}_B}^2\,dx \\
	&\eqsim \epsilonDG^{-1} \dashint_B \abs{V(\nabla u)- V \left(\mean{\nabla u}_B^A \right)}^2\,dx \\
	&\lesssim \epsilonDG^{-1} \dashint_B \abs{A(\nabla u)- \mean{A(\nabla u)}_B}^{p'}\,dx.
\end{align*}
 In addition, the other integral can be estimated by Lemma~\ref{lem:hammer}, non-linear comparison (Lemma~\ref{lem:nonlin-comparison-V}) and Corollary~\ref{cor:hammer-pge2} again such that we obtain       
    \begin{align*}
	  \dashint_{B}  \phi^*_{\abs{A(\nabla u)}}\big(\abs{A(\nabla u) - A(\nabla h)}\big) \,dx
        &\lesssim \dashint_{B} \abs{V(\nabla u) - V(\nabla h)}^2 \,dx \\
        &\lesssim \dashint_B \phi^*_{\abs{A(\nabla u)}}(\abs{F-\mean{F}_B})\,dx \\
		&\lesssim \dashint_B \abs{F-\mean{F}_B}^{p'}\,dx.
  \end{align*} 
  Hence, we have shown that
  \begin{align*}
  	R &\lesssim \frac{\lambda}{\epsilonDG} \dashint_B \abs{A(\nabla u)- \mean{A(\nabla u)}_B}^{p'}\,dx +
  	\lambda^{1-p} \dashint_B \abs{F-\mean{F}_B}^{p'}\,dx.
  \end{align*}
  Since $p'(1-p)=-p$ choosing $\lambda:=\epsilonDG \, \theta^{p'\beta +d}$ now yields the claimed estimate on~$R$ 
  and thus the proof is complete.
\end{proof}

\subsection{Non-degenerate case}
\label{ssec:non-degenerate-case-u}

Let us now turn to the non-degenerate case. In particular, we will
assume that~$u$ satisfies the following non-degeneracy condition on~$B$
\begin{align}
  \label{eq:non-deg-cond-u}
  \dashint_{B} \abs{V(\nabla u) - \mean{V(\nabla u)}_{B}}^2\,dx &\leq \epsilonDG
  \dashint_{B} \abs{V(\nabla u)}^2\,dx.
\end{align}
Unfortunately, we cannot proceed as in the degenerate case and
compare~$u$ with a $p$-harmonic function~$h$. The reason is a
technical one, namely that the shift-changes cannot be controlled by
means of oscillations.

However, the non-degeneracy condition ensures that $V(\nabla u)$ is in
some sense close to the constant~$\mean{V(\nabla u)}_{B}$. This
implies that~$\nabla u$ is close to $\mean{\nabla u}_{B}^A$. Hence,
the system behaves approximately like a linear one with constant
coefficients. In particular, this argument works best on the set where
$\abs{\nabla u - \mean{\nabla u}_{B}^A} \ll \abs{\mean{\nabla
    u}_{B}^A}$. The non-degeneracy condition however is only in the
integral sense, so there is a small set of points that fail this
condition. It turns out that we can control the critical terms on this
set by the (non-optimal) decay estimates of
Lemma~\ref{lem:decay-u-non-deg-V}. This is done in
Lemma~\ref{lem:decay-A-subset} below. On the remaining ``nice'' set, we will
estimate the $A$-oscillation by using an approximation by a linear
system with constant coefficients, see Lemma~\ref{lem:comparison-linear}.

Before we get to Lemma~\ref{lem:decay-A-subset}, we need a few
auxiliary results on averages.
The subsequent two lemmata follow the spirit of
\cite[Lemma~2.12]{BreCiaDieKuuSch17}.
\begin{lemma}
  \label{lem:B-non-deg}
  There exists a constant~$\epsilon_0$ such that if~$u$ satisfies the
  non-degeneracy condition~\eqref{eq:non-deg-cond-u} on~$B$
  with~$\epsilonDG \leq \epsilon_0$, then
  \begin{align}
    \label{eq:B-non-deg1}
     \frac 12 \max \bigset{ \abs{\mean{\nabla u}_{B}},\,  \abs{\mean{\nabla u}_{B}^A}}
     &\leq \abs{\mean{\nabla u}_{B}^V}
     \leq 2\, \min \bigset{ \abs{\mean{\nabla u}_{B}},
       \abs{\mean{\nabla u}_{B}^A}
       },
  \end{align}
  and
  \begin{align}
    \label{eq:B-non-deg3}
    \max \bigset{ \abs{\mean{\nabla u}_{B}-\mean{\nabla u}_{B}^V}, \, 
    \abs{\mean{\nabla u}^A_{B}-\mean{\nabla u}_{B}^V}}
    &\leq c\, \sqrt{\epsilonDG} \; \abs{\mean{\nabla u}_{B}^V}.
  \end{align}
\end{lemma}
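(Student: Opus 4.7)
The strategy is to first establish~\eqref{eq:B-non-deg3} and then deduce~\eqref{eq:B-non-deg1} from it by a triangle-inequality argument, provided $\epsilon_0$ is small enough that $c\sqrt{\epsilon_0}\leq \tfrac12$. Throughout I abbreviate $v_*:=\mean{\nabla u}_B^V$, $v_A:=\mean{\nabla u}_B^A$ and $\bar v:=\mean{\nabla u}_B$. The first preliminary step is to note the Hilbert-space orthogonality identity
\begin{align*}
  \dashint_B \abs{V(\nabla u)}^2\,dx = \dashint_B \abs{V(\nabla u)-\mean{V(\nabla u)}_B}^2\,dx + \abs{\mean{V(\nabla u)}_B}^2,
\end{align*}
which, combined with $\abs{\mean{V(\nabla u)}_B}^2 = \abs{V(v_*)}^2 = \abs{v_*}^p$ and the non-degeneracy assumption~\eqref{eq:non-deg-cond-u}, yields the two-sided comparison $\abs{v_*}^p \leq \dashint_B\abs{V(\nabla u)}^2\,dx \leq (1-\epsilonDG)^{-1}\abs{v_*}^p$ for $\epsilonDG<1$.

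Next, for any $w\in\{v_A,\bar v\}$, using $V(v_*)=\mean{V(\nabla u)}_B$ and Jensen's inequality I would estimate
\begin{align*}
  \abs{V(w)-V(v_*)}^2 = \abs{\mean{V(w)-V(\nabla u)}_B}^2 \leq \dashint_B \abs{V(\nabla u)-V(w)}^2\,dx.
\end{align*}
Applying Lemma~\ref{lem:DKS6.2} to switch the shift inside $V$ back to~$v_*$, and then invoking~\eqref{eq:non-deg-cond-u} and the preliminary comparison above, I would arrive at the key $V$-estimate
\begin{align*}
  \abs{V(w)-V(v_*)}^2 \lesssim \epsilonDG\,\abs{v_*}^p, \qquad w\in\{v_A,\bar v\}.
\end{align*}

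Now comes the main task: converting this into an estimate on $\abs{w-v_*}$. By Lemma~\ref{lem:hammer} (and $p\geq 2$) we have $\abs{V(w)-V(v_*)}^2 \eqsim (\abs{w}+\abs{v_*})^{p-2}\,\abs{w-v_*}^2$. I would distinguish two cases. If $\abs{w}\leq 2\abs{v_*}$, then $(\abs{w}+\abs{v_*})^{p-2}\gtrsim \abs{v_*}^{p-2}$ (here $p\geq 2$ is essential), so the $V$-estimate gives $\abs{w-v_*}^2\lesssim \epsilonDG\,\abs{v_*}^2$, which is precisely~\eqref{eq:B-non-deg3}. If instead $\abs{w}>2\abs{v_*}$, then $\abs{w-v_*}\geq \tfrac12\abs{w}$ and $(\abs{w}+\abs{v_*})^{p-2}\geq\abs{w}^{p-2}$, whence $\abs{w}^p\lesssim \epsilonDG\,\abs{v_*}^p$; choosing $\epsilon_0$ small enough that $c\,\epsilon_0^{1/p}<2$ then contradicts the assumption $\abs{w}>2\abs{v_*}$, ruling out this alternative. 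Thus~\eqref{eq:B-non-deg3} holds for both $w=v_A$ and $w=\bar v$.

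Finally, \eqref{eq:B-non-deg1} is a direct consequence of~\eqref{eq:B-non-deg3}: if $\epsilon_0$ is further shrunk to guarantee $c\sqrt{\epsilon_0}\leq\tfrac12$, then the reverse triangle inequality gives $\tfrac12\abs{v_*}\leq \abs{w}\leq \tfrac32\abs{v_*}$ for $w\in\{v_A,\bar v\}$, yielding both bounds in~\eqref{eq:B-non-deg1} after possibly adjusting the constants from $3/2$ to $2$. The main obstacle is the case distinction in the third paragraph, since this is the only place where the smallness of $\epsilonDG$ must be genuinely exploited---not just as a small multiplier but to exclude a whole alternative---in order to turn a smallness of $V$-averages into a smallness of averages of the original gradient.
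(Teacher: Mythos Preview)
Your argument is correct and close in spirit to the paper's, but the route you take to~\eqref{eq:B-non-deg3} is longer than necessary. The case distinction in your third paragraph is superfluous: since $p\geq 2$, the lower bound $(\abs{w}+\abs{v_*})^{p-2}\geq \abs{v_*}^{p-2}$ holds \emph{unconditionally}, regardless of whether $\abs{w}\leq 2\abs{v_*}$ or not. Hence your ``first case'' computation already gives $\abs{v_*}^{p-2}\abs{w-v_*}^2 \lesssim \abs{V(w)-V(v_*)}^2 \lesssim \epsilonDG\,\abs{v_*}^p$, i.e.\ $\abs{w-v_*}^2\lesssim \epsilonDG\,\abs{v_*}^2$, without ever needing to rule out the alternative $\abs{w}>2\abs{v_*}$ by contradiction. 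This is exactly how the paper proceeds, and it removes what you flagged as ``the main obstacle''.

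A second, minor difference is the order: the paper first derives~\eqref{eq:B-non-deg1} directly at the $V$-level---from $\bigl|\,\abs{V(w)}-\abs{V(v_*)}\,\bigr|\leq c\sqrt{\epsilonDG}\,\abs{V(v_*)}$ and $\abs{V(Q)}=\abs{Q}^{p/2}$---and only then proves~\eqref{eq:B-non-deg3} via the monotonicity above. Your reverse order (first~\eqref{eq:B-non-deg3}, then~\eqref{eq:B-non-deg1} by the triangle inequality) works just as well and is arguably more natural; the two orderings are essentially equivalent once the unnecessary case split is removed.
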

\begin{proof}
  Using~\eqref{eq:non-deg-cond-u} we estimate
  \begin{align*}
    \dashint_B \abs{V(\nabla u) - \mean{V(\nabla u)}_B}^2\,dx 
&\leq \epsilonDG \dashint_B \abs{V(\nabla u)}^2\,dx
    \\
&\leq 2\, \epsilonDG \dashint_B \abs{V(\nabla u) -  \mean{V(\nabla
      u)}_B }^2\,dx +2\, \epsilonDG
      \abs{\mean{V(\nabla u)}_B}^2.
  \end{align*}
  For~$\epsilonDG\leq \epsilon_0 \leq \frac 14$ we can absorb the first term on the
  right-hand side and obtain
  \begin{align}
    \label{eq:B-non-deg2}
    \dashint_B \abs{V(\nabla u) - \mean{V(\nabla u)}_B}^2\,dx
    &\leq 4\, \epsilonDG \abs{\mean{V(\nabla u)}_B}^2
	=4\, \epsilonDG \abs{V\!\left( \mean{\nabla u}_B^V \right)}^2.      
  \end{align}

  Now let $Q\in\left\{\mean{\nabla u}_B, \mean{\nabla u}^A_B\right\}$. 
  Then we can use Lemma~\ref{lem:DKS6.2} and \eqref{eq:B-non-deg2} to derive
  \begin{align}
      \abs{V \!\left( \mean{\nabla u}^V_B \right) - V(Q)}^2
      &= \abs{\mean{V(\nabla u)}_B- V(Q)}^2
      \nonumber\\
      &\leq \dashint_B \abs{V(\nabla u) - V(Q)}^2\,dx
      \nonumber\\
      &\leq c\, \dashint_B \abs{V(\nabla u) - \mean{V(\nabla u)}_B}^2\,dx
      \nonumber\\
      &\leq c\, \epsilonDG \abs{V\!\left( \mean{\nabla u}_B^V \right)}^2 \label{eq:B-non-deg7}
  \end{align}
and hence
  \begin{align*}
	(1-c\, \epsilonDG^{1/2})^2 \, \abs{V \!\left( \mean{\nabla u}^V_B \right)}^2 
    \leq \abs{V(Q)}^2 
    \leq (1+c\, \epsilonDG^{1/2})^2 \, \abs{V \!\left( \mean{\nabla u}^V_B \right)}^2.
  \end{align*}
  Now we choose $\epsilon_0 \geq \epsilonDG$ small enough and use that $\abs{V(Q)}=\abs{Q}^{p/2}$ to conclude
    \begin{align*}
  	\tfrac 12 \, \abs{ \mean{\nabla u}^V_B } 
  	\leq \abs{Q} 
  	\leq 2\, \abs{ \mean{\nabla u}^V_B },
  	\qquad Q\in\left\{\mean{\nabla u}_B, \mean{\nabla u}^A_B\right\},
  \end{align*}
  which shows \eqref{eq:B-non-deg1}.

  It remains to prove~\eqref{eq:B-non-deg3}. 
  To this end, let $P:=\mean{\nabla u}_B^V$. Since $p\geq 2$, it then follows from Lemma~\ref{lem:hammer} and~\eqref{eq:B-non-deg7} that
  \begin{align*}
  	\abs{P}^{p-2} \abs{ P - Q}^2 
  	\leq \left( \abs{P} + \abs{Q} \right)^{p-2} \abs{ P - Q}^2
  	\leq c\, \epsilonDG \, \abs{V(P)}^2
  	= c\, \epsilonDG \, \abs{P}^p, 
  \end{align*}
  i.e.,
  \begin{align*}
  	\abs{ P - Q} &\leq c\, \sqrt{\epsilonDG} \, \abs{P}
  \end{align*}
which completes the proof.
\end{proof}

If $\epsilonDG$ is small enough, our non-degeneracy condition
passes over from $B$ to some sub-balls:
\begin{lemma}
  \label{lem:tauB-average}
  For all~$\tau \in (0,1)$ there exists
  $\epsilon=\epsilon(\tau)>0$ with the following property: If $u$ satisfies the
  non-degeneracy condition~\eqref{eq:non-deg-cond-u} on~$B$ with some~$\epsilonDG \leq \epsilon$, then $u$ also satisfies~\eqref{eq:non-deg-cond-u} on $\tau B$
  with~$\epsilonDG$ replaced by~$16\, \tau^{-d} \,\epsilonDG$ and we have
  \begin{align}
    \label{eq:tauB-average1}
    \begin{aligned}
      \tfrac 12 \, \abs{\mean{V(\nabla u)}_{B}} 
      &\leq \abs{\mean{V(\nabla u)}_{\tau B}} 
      \leq 2 \, \abs{\mean{V(\nabla u)}_{B}},
      \\
      \tfrac 12 \, \abs{\mean{\nabla u}_{B}^V} 
      &\leq \abs{\mean{\nabla u}_{\tau B}^V} 
      \leq 2 \, \abs{\mean{\nabla u}_{B}^V}.
    \end{aligned}
  \end{align}
\end{lemma}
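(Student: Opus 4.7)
The plan is to leverage the intermediate bound~\eqref{eq:B-non-deg2} from the proof of Lemma~\ref{lem:B-non-deg}. Assuming $\epsilonDG \le \epsilon_0$, it reads
\[
  \dashint_B \abs{V(\nabla u) - \mean{V(\nabla u)}_B}^2\,dx \le 4\,\epsilonDG\,\abs{\mean{V(\nabla u)}_B}^2,
\]
and from here everything reduces to the inclusion $\tau B \subset B$ together with the volume factor $\abs{B}/\abs{\tau B} = \tau^{-d}$.

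First I would use this inclusion, combined with the fact that in $L^2$ the mean is the minimizer, to transfer the oscillation bound to the sub-ball:
\[
  \dashint_{\tau B} \abs{V(\nabla u) - \mean{V(\nabla u)}_{\tau B}}^2\,dx
  \le \dashint_{\tau B} \abs{V(\nabla u) - \mean{V(\nabla u)}_B}^2\,dx
  \le 4\tau^{-d}\epsilonDG\,\abs{\mean{V(\nabla u)}_B}^2.
\]
Jensen's inequality applied to $\abs{\mean{V(\nabla u)}_{\tau B} - \mean{V(\nabla u)}_B}^2$ yields the same upper bound. Choosing $\epsilon=\epsilon(\tau) \le \epsilon_0$ small enough that $4\tau^{-d}\epsilon \le 1/4$ forces $\abs{\mean{V(\nabla u)}_{\tau B} - \mean{V(\nabla u)}_B} \le \tfrac{1}{2}\abs{\mean{V(\nabla u)}_B}$, which gives the first line of~\eqref{eq:tauB-average1}. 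For the second line, I would use $V(\mean{\nabla u}_\cdot^V) = \mean{V(\nabla u)}_\cdot$ and $\abs{V(Q)}=\abs{Q}^{p/2}$, so that the ratio of the $V$-averages equals the ratio of the plain means raised to the power $2/p$; since $p \ge 2$ gives $2/p \le 1$, factors of $2$ in the plain means translate to factors at most $2^{2/p}\le 2$ (and at least $2^{-2/p}\ge 1/2$) in the $V$-averages.

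Finally, chaining these with $\abs{\mean{V(\nabla u)}_B} \le 2\,\abs{\mean{V(\nabla u)}_{\tau B}}$ (just proved) and Jensen's inequality $\abs{\mean{V(\nabla u)}_{\tau B}}^2 \le \dashint_{\tau B}\abs{V(\nabla u)}^2\,dx$ gives
\[
  \dashint_{\tau B} \abs{V(\nabla u) - \mean{V(\nabla u)}_{\tau B}}^2\,dx
  \le 16\tau^{-d}\epsilonDG \dashint_{\tau B}\abs{V(\nabla u)}^2\,dx,
\]
which is precisely the non-degeneracy condition on $\tau B$ with $\epsilonDG$ replaced by $16\tau^{-d}\epsilonDG$. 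The constant $16$ matches the statement and no further smallness of $\epsilonDG$ is needed beyond the one imposed above.

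I do not foresee a serious obstacle: all ingredients are the already-established bound~\eqref{eq:B-non-deg2}, the trivial volume comparison, Jensen's inequality, and a scalar manipulation. The only mildly delicate point is the passage from $\mean{V(\nabla u)}_\cdot$ to $\mean{\nabla u}_\cdot^V$, which is precisely where the hypothesis $p \ge 2$ enters to preserve the constant $2$ in both directions.
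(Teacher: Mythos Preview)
Your proposal is correct and follows essentially the same route as the paper's proof: both arguments combine the bound~\eqref{eq:B-non-deg2}, the volume factor $\tau^{-d}$ from the inclusion $\tau B\subset B$, Jensen's inequality, and the scalar relation $\abs{\mean{V(\nabla u)}_\cdot}=\abs{\mean{\nabla u}_\cdot^V}^{p/2}$. One small remark: the paper handles the passage to the second line of~\eqref{eq:tauB-average1} by first choosing an auxiliary $\delta\in(0,\tfrac12)$ with $(1\pm\delta)^{2/p}\in[\tfrac12,2]$ and then taking $\epsilon$ small enough that $\tau^{-d}\epsilon\le\delta^2$, which works for every $p>1$, so the hypothesis $p\ge2$ is not actually what makes that step go through.
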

\begin{proof}
  Let us show~\eqref{eq:tauB-average1} first. For this purpose we use~\eqref{eq:non-deg-cond-u} on $B$ to estimate
  \begin{align*}
    \bigabs{\mean{V(\nabla u)}_{\tau B}-\mean{V(\nabla u)}_B}^2\, dx
    &\le
      \dashint_{\tau B}\abs{V(\nabla u)-\mean{V(\nabla u)}_B}^2\,
      dx
    \\
    &\le \tau^{-d}\dashint_{B}\abs{V(\nabla u)-\mean{V(\nabla
      u)}_B}^2\, dx
    \\
    &\le\tau^{-d}\,\epsilonDG\,\abs{\mean{V(\nabla u)}_B}^2. 
  \end{align*}
  Now fix $\delta \in (0, \frac 12)$ small enough such that $(1-\delta)^{\frac 2p} \geq \frac 12$
  and $(1+\delta)^{\frac 2p} \leq 2$. Then, for small~$\epsilon \geq \epsilonDG>0$, we
  have $\tau^{-d}\epsilonDG \leq \delta^2$ and therefore
  \begin{align*}
    \bigabs{\mean{V(\nabla u)}_{\tau B}-\mean{V(\nabla u)}_B}
    &\leq
      \delta \, \abs{\mean{V(\nabla u)}_B},
  \end{align*}
  i.e.,
  \begin{align*}
	(1-\delta) \, \abs{\mean{V(\nabla u)}_{B}}
	&\leq \abs{\mean{V(\nabla u)}_{\tau B}} \leq
	(1+\delta) \, \abs{\mean{V(\nabla u)}_{B}}.
  \end{align*}
  Thus, our choice of $\delta$ and the fact that $\abs{\mean{V(\nabla u)}_B} = \abs{\mean{\nabla u}^V_B}^{\frac p2}$ proves~\eqref{eq:tauB-average1}.
  
  Further, we may estimate
  \begin{align*}
    \dashint_{\tau B} \abs{V(\nabla u)- \mean{V(\nabla u)}_{\tau
    B}}^2\,dx
    &\leq
      \dashint_{\tau B} \abs{V(\nabla u)- \mean{V(\nabla u)}_{B}}^2\,dx
    \\
    &\leq
      \tau^{-d} \dashint_{B} \abs{V(\nabla u)- \mean{V(\nabla u)}_{B}}^2\,dx.
  \end{align*}
  We can additionally assume that~$\epsilon$ is so small that~\eqref{eq:B-non-deg2} from the proof of Lemma~\ref{lem:B-non-deg} holds true. Together with the first part of~\eqref{eq:tauB-average1} this yields
  \begin{align*}
    \dashint_{\tau B} \abs{V(\nabla u)- \mean{V(\nabla u)}_{\tau
    B}}^2\,dx
    &\leq
      4\,\tau^{-d} \,\epsilonDG \, \abs{\mean{V(\nabla u)}_B}^2
      \\
    &\leq
      16\,\tau^{-d} \, \epsilonDG \, \abs{\mean{V(\nabla u)}_{\tau B}}^2
      \\
    &\leq
      16\,\tau^{-d} \, \epsilonDG \, \dashint_{\tau B} \abs{V(\nabla u)}^2\,dx
  \end{align*}
which completes the proof.
\end{proof}

The following lemma is an adaptation of~\cite[Lemma~2.19]{BreCiaDieKuuSch17}.
\begin{lemma}
  \label{lem:decay-A-subset}
  Let~$\sigma,\tau \in (0, \frac 14)$. Then there
  exists~$\epsilon=\epsilon(\sigma,\tau)>0$ such that if~$u$ satisfies the
  non-degeneracy condition~\eqref{eq:non-deg-cond-u} on $B$ with~$\epsilonDG
  \leq \epsilon$, then
  \begin{align*}
    \lefteqn{\dashint_B \abs{A(\nabla u) - \mean{A(\nabla
    u)}_B}^{p'}\,  \chi_{\set{\abs{\nabla u -
    \mean{\nabla u}_B^A} \geq \sigma \abs{\mean{\nabla u}_B^A}}} \,dx}
    \qquad
    \\
    &\leq   c\,\sigma^{-2p} \tau^{2\gamma} \bigg(\dashint_{2B}
      \abs{A(\nabla u)- \mean{A(\nabla u)}_{2B}}^{p'} \,dx +
      \tau^{-d-2\gamma}
      \dashint_{2B} \abs{F-\mean{F}_{2B}}^{p'} \,dx \bigg)
  \end{align*}
  with~$\gamma$ from Lemma~\ref{lem:V-decay}.
\end{lemma}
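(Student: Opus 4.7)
The plan is to exploit the smallness of the bad set $E := \{x \in B : |\nabla u(x) - \mean{\nabla u}_B^A| \geq \sigma |\mean{\nabla u}_B^A|\}$ in the non-degenerate regime, combined with the non-optimal $V$-decay of Lemma~\ref{lem:decay-u-non-deg-V}. Writing $M := |\mean{\nabla u}_B^V|$, Lemma~\ref{lem:B-non-deg} (for $\epsilonDG \leq \epsilon_0$) ensures that the three averages $\mean{\nabla u}_B$, $\mean{\nabla u}_B^A$, $\mean{\nabla u}_B^V$ all have modulus $\eqsim M$, and Lemma~\ref{lem:tauB-average} (for $\epsilonDG \leq \epsilon(\tau)$) transfers the non-degeneracy condition to $\tau B$ with the relaxed constant $16\tau^{-d}\epsilonDG$, so that $|\mean{\nabla u}_{\tau B}^V| \eqsim M$ as well. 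In particular $\mean{A(\nabla u)}_B = A(\mean{\nabla u}_B^A)$ has size $\eqsim M^{p-1}$.

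The key pointwise ingredient on $E$, obtained from the monotonicity estimates of Lemma~\ref{lem:hammer} via a case analysis according to whether $|\nabla u| \leq M/2$ or $|\nabla u| > M/2$, will read
\begin{equation*}
|V(\nabla u) - V(\mean{\nabla u}_{\tau B}^V)|^2 \gtrsim \sigma^2 M^p
\quad\text{and}\quad
|A(\nabla u) - \mean{A(\nabla u)}_B|^{p'} \lesssim \sigma^{-q} |V(\nabla u) - V(\mean{\nabla u}_{\tau B}^V)|^2
\end{equation*}
for an explicit power $q = q(p) > 0$; the centre change between $\mean{\nabla u}_B^V$ and $\mean{\nabla u}_{\tau B}^V$ is absorbed by a further smallness restriction on $\epsilonDG$ using Lemma~\ref{lem:B-non-deg} on both $B$ and $\tau B$. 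I then split $E = (E \cap \tau B) \cup (E \setminus \tau B)$. On $E \cap \tau B$, I integrate the pointwise bound, replace $V(\mean{\nabla u}_{\tau B}^V)$ by $\mean{V(\nabla u)}_{\tau B}$ using Lemma~\ref{lem:DKS6.2}, and apply Lemma~\ref{lem:decay-u-non-deg-V} on $\tau B$; the result is dominated by $c\,\sigma^{-q} \bigl[\tau^{d+2\gamma} \dashint_{2B} |A - \mean{A}_{2B}|^{p'}\,dx + \dashint_{2B} |F - \mean{F}_{2B}|^{p'}\,dx\bigr]$, which already lies within the target bound since $\tau^{d+2\gamma} \leq \tau^{2\gamma}$ and $1 \leq \tau^{-d}$. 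On $E \setminus \tau B$ I use Chebyshev on $B$ with the non-degeneracy to obtain $|E|/|B| \leq c\,\sigma^{-2}\epsilonDG$, combined with H\"older's inequality and a Gehring-type higher integrability of $|A(\nabla u) - \mean{A(\nabla u)}_B|^{p'}$ (derivable from the reverse H\"older Lemma~\ref{lem:reverse-hoelder} via a Caccioppoli estimate); this bounds the residual contribution by $c\,(\sigma^{-2}\epsilonDG)^{\delta/(1+\delta)}$ times the oscillations of $A$ and $F$ on $2B$. Choosing $\epsilonDG = \epsilon(\sigma,\tau)$ small enough that this prefactor does not exceed $c\,\sigma^{-2p}\tau^{2\gamma}$ finishes the proof.

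The main difficulty will lie in the pointwise comparison above and in the book-keeping needed to extract exactly the announced power $\sigma^{-2p}$: the pointwise exponent $\sigma^{-q}$ from the case analysis on $E \cap \tau B$ and the H\"older-based exponent $(\sigma^{-2}\epsilonDG)^{\delta/(1+\delta)}$ from the outer region $E \setminus \tau B$ must be balanced against the target. A secondary technical point is that Lemma~\ref{lem:decay-u-non-deg-V} produces no decay on the annular region $E \setminus \tau B$; this is precisely what forces the smallness threshold for $\epsilonDG$ to depend on both $\sigma$ and $\tau$, as reflected in the statement $\epsilon = \epsilon(\sigma,\tau)$.
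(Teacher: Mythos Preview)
Your pointwise comparison on the bad set~$E$ is essentially the paper's, and the part on $E\cap\tau B$ is in the right spirit. The genuine gap is in your treatment of the annulus $E\setminus\tau B$. You invoke a ``Gehring-type higher integrability of $|A(\nabla u)-\mean{A(\nabla u)}_B|^{p'}$'' as if it were immediate from Lemma~\ref{lem:reverse-hoelder}. It is not: that lemma gives a bound on $|V(\nabla u)-V(Q)|^2$, and for $p>2$ one only has $|V(P)-V(Q)|^2\eqsim\phi^*_{|A(Q)|}(|A(P)-A(Q)|)\leq c\,|A(P)-A(Q)|^{p'}$ with strict inequality whenever $|A(P)-A(Q)|\ll|A(Q)|$. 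So higher integrability of $|V(\nabla u)-V(Q)|^2$ (which itself would require a separate Caccioppoli/Sobolev--Poincar\'e argument not in the paper) does \emph{not} transfer to $|A(\nabla u)-\mean{A(\nabla u)}_B|^{p'}$ off the set~$E$. You would need to route everything through $V$ via the pointwise bound on~$E$, then apply H\"older and Gehring to $|V-V(Q)|^2$; this is plausible but is several nontrivial steps you have not carried out, and it forces the threshold~$\epsilon$ to depend on the (unknown) Gehring exponent~$\delta$ as well.

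The paper avoids this entirely by a covering argument: instead of splitting $E$ into $E\cap\tau B$ and $E\setminus\tau B$, one covers~$B$ by a locally finite family of translated balls~$\tau B_j$, where the~$B_j$ are translates of~$B$ with centres in~$B$ (so $B_j\subset 2B$ and $|B_j|=|B|$). On each~$\tau B_j$ the non-degeneracy condition transfers from~$B$ (Lemma~\ref{lem:tauB-average}), all the relevant averages on~$\tau B_j$ and~$B$ are comparable (Lemma~\ref{lem:B-non-deg}), and one obtains the pointwise bound $|A(\nabla u)-\mean{A(\nabla u)}_B|^{p'}\chi_{E_\sigma}\lesssim\sigma^{-2p}\,|V(\nabla u)-V(\mean{\nabla u}^A_{\tau B_j})|^2$ on~$\tau B_j$. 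Then Lemma~\ref{lem:decay-u-non-deg-V}, applied to~$\tfrac12 B_j$ with shrinking factor~$2\tau$, bounds $\dashint_{\tau B_j}|V(\nabla u)-\mean{V(\nabla u)}_{\tau B_j}|^2\,dx$ by $c\,\tau^{2\gamma}\dashint_{2B}|A(\nabla u)-\mean{A(\nabla u)}_{2B}|^{p'}\,dx+c\,\tau^{-d}\dashint_{2B}|F-\mean{F}_{2B}|^{p'}\,dx$ uniformly in~$j$. Summing over~$j$ via $\sum_j|\tau B_j|/|B|\lesssim 1$ gives the claim directly, with the~$\tau^{2\gamma}$ factor appearing everywhere and no higher integrability needed.
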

\begin{proof}
  Let $\sigma,\tau \in (0,\frac 14)$.  Then it is possible to cover~$B$
  by a locally finite set of balls~$\tau B_j$, where the~$B_j$ are
  translates of~$B$ with centers within~$B$. In
  particular,~$B \subset \bigcup_j (\tau B_j) \subset 2 B$ and
  $B_j \subset 2B$.  We define
  \begin{align*}
    I_j &:= \dashint_{\tau B_j} \abs{A(\nabla u) - \mean{A(\nabla
          u)}_B}^{p'}\,  \chi_{E_\sigma} \,dx
  \end{align*}
  with $E_\sigma := \set{x \;|\; \abs{\nabla u - \mean{\nabla u}_B^A} \geq \sigma
    \abs{\mean{\nabla u}_B^A}}$. Then
  \begin{align*}
    I &:= \dashint_B \abs{A(\nabla u) - \mean{A(\nabla
        u)}_B}^{p'}\,  \chi_{\set{\abs{\nabla u -
        \mean{\nabla u}_B^A} \geq \sigma \abs{\mean{\nabla u}_B^A}}}
        \,dx \leq \sum_j \frac{\abs{\tau B_j}}{\abs{B}} I_j. 
  \end{align*}
  If~$\epsilon=\epsilon(\tau)$ is small enough, then according to
  Lemma~\ref{lem:tauB-average} $u$ also satisfies the 
  non-degeneracy condition~\eqref{eq:non-deg-cond-u} w.r.t.\ the ball $\tau B_j$.  Now,
  Lemma~\ref{lem:B-non-deg} for~$B$ and~$\tau B_j$ and
  Lemma~\ref{lem:tauB-average} imply that
  \begin{align}
    \label{eq:8}
    \begin{alignedat}{3}
      \abs{\mean{\nabla u}_{B}^A} &\leq 2\,
      \abs{\mean{\nabla u}_{B}^V} &&\leq 4\,
      \abs{\mean{\nabla u}_{\tau B_j}^V} &&\leq 8\,
      \abs{\mean{\nabla u}_{\tau B_j}^A},
      \\
      \abs{\mean{\nabla u}_{\tau B_j}^A} &\leq 2\,
      \abs{\mean{\nabla u}_{\tau B_j}^V} &&\leq 4\,
      \abs{\mean{\nabla u}_{B}^V} &&\leq 8\,
      \abs{\mean{\nabla u}_{B}^A}.
    \end{alignedat}
  \end{align}
  Therefore, on the set $\tau B_j$ 
  we can estimate
  \begin{align}
    \label{eq:9}
    \begin{aligned}
      \abs{\nabla u - \mean{\nabla u}_B^A} \, \chi_{E_\sigma}
      &\leq \big(\abs{\nabla u - \mean{\nabla u}_{\tau
          B_j}^A} + \abs{\mean{\nabla u}_{B}^A} + \abs{\mean{\nabla
          u}_{\tau B_j}^A} \big) \, \chi_{E_\sigma}
      \\
      &\leq \big( \abs{\nabla u - \mean{\nabla u}_{\tau B_j}^A} + 9\,\abs{\mean{\nabla u}_{B}^A} \big) \, \chi_{E_\sigma}
      \\
    &\leq c\, \sigma^{-1} \abs{\nabla u - \mean{\nabla u}_{\tau B_j}^A} \, \chi_{E_\sigma}
    \end{aligned}
  \end{align}
  since $\sigma < 1$. Moreover, we can employ Lemma~\ref{lem:hammer} (with $P:=\nabla u$ and $Q:=\mean{\nabla u}^A_B$ and $p\geq 2$) to obtain
  \begin{align*}
   \abs{A(\nabla u) - \mean{A(\nabla
    u)}_B}^{p'} \chi_{E_\sigma}
    &\leq c\, \left( \Big(\abs{\mean{\nabla u}^A_B} + \abs{\nabla u - \mean{\nabla
      u}^A_B}\Big)^{p-2}  \abs{\nabla u - \mean{\nabla
      u}^A_B} \right)^{p'} \chi_{E_\sigma}
    \\
    &\leq c\, \bigabs{V(\nabla u) - V(\mean{\nabla
      u}^A_B)}^2\, \bigg( \frac{\abs{\mean{\nabla u}_B^A} +
      \abs{\nabla u -\mean{\nabla
      u}^A_B}}{\abs{\nabla u - \mean{\nabla
      u}^A_B}} \bigg)^{2-p'} \chi_{E_\sigma}
    \\
    &\leq c\,\sigma^{p'-2} \bigabs{V(\nabla u) - V(\mean{\nabla
      u}^A_B)}^2 \chi_{E_\sigma}.
  \end{align*}
  Applying Lemma~\ref{lem:hammer} once more, together with \eqref{eq:8}, \eqref{eq:9} and the fact that $p'-2-p>-2p$ this yields
  \begin{align*}
    \abs{A(\nabla u) - \mean{A(\nabla u)}_B}^{p'} \chi_{E_\sigma}
    &\leq c\, \sigma^{p'-2} \, \phi_{\abs{\mean{\nabla u}^A_B}}(\abs{\nabla u - \mean{\nabla u}^A_B}) \, \chi_{E_\sigma}
    \\
    &\leq c\, \sigma^{p'-2} \, \phi_{\abs{\mean{\nabla
      u}^A_{\tau B_j}}}(c\,\sigma^{-1} \abs{\nabla u - \mean{\nabla
      u}^A_{\tau B_j}})\, \chi_{E_\sigma}
    \\
    &\leq c\, \sigma^{-2p} \, \phi_{\abs{\mean{\nabla
      u}^A_{\tau B_j}}}(\abs{\nabla u - \mean{\nabla
      u}^A_{\tau B_j}}) \, \chi_{E_\sigma}
    \\
    &\leq c\, \sigma^{-2p} \, \abs{V(\nabla u) -
      V(\mean{\nabla u}_{\tau B_j}^A)}^2
  \end{align*}
on every $\tau B_j$ such that
  \begin{align*}
	I &\leq c\,\sigma^{-2p}\, \sum_j \frac{\abs{\tau B_j}}{\abs{B}}
	\dashint_{\tau B_j} \abs{V(\nabla u) - 
		V(\mean{\nabla u}_{\tau B_j}^A)}^2\,dx.  
\end{align*}
  For these local integrals it follows from Lemma~\ref{lem:decay-u-non-deg-V} (applied for $\tfrac 12 B_j$), $\abs{B_j}=\abs{B}$, and
  $B_j \subset 2B$ that
  \begin{align*}
    \lefteqn{\dashint_{\tau B_j}  \abs{V(\nabla u) - \mean{V(\nabla u)}_{\tau
    B}}^{2}\,dx} \qquad
    &
    \\
    &\leq  c\, (2\tau)^{2\gamma} \dashint_{B_j}
      \abs{A(\nabla u)- \mean{A(\nabla u)}_{B_j}}^{p'} \,dx +
      c\,(2\tau)^{-d}
      \dashint_{B_j} \abs{F-\mean{F}_{B_j}}^{p'} \,dx.
    \\
    &\leq  c\, \tau^{2\gamma} \dashint_{2B}
      \abs{A(\nabla u)- \mean{A(\nabla u)}_{2B}}^{p'} \,dx +
      c\,\tau^{-d}
      \dashint_{2B} \abs{F-\mean{F}_{2B}}^{p'} \,dx.
  \end{align*}
  This together with the previous estimate and the covering properties of the $\tau B_j$ proves
  \begin{align*}
    I &\lesssim \sigma^{-2p} \,\sum_j  \frac{\abs{\tau B_j}}{\abs{B}} \bigg( \tau^{2\gamma} \dashint_{2B} \abs{A(\nabla u)- \mean{A(\nabla u)}_{2B}}^{p'} \,dx 
        + \tau^{-d} \dashint_{2B} \abs{F-\mean{F}_{2B}}^{p'} \,dx \bigg) \\
 	&\leq  c\,\sigma^{-2p} \tau^{2\gamma} \bigg(\dashint_{2B}
      \abs{A(\nabla u)- \mean{A(\nabla u)}_{2B}}^{p'} \,dx
      +\tau^{-d-2\gamma} \dashint_{2B} \abs{F-\mean{F}_{2B}}^{p'} \,dx \bigg),
  \end{align*}
as claimed.
\end{proof}

Sometimes it is useful to apply Lemma~\ref{lem:decay-A-subset} with a
different kind of indicator set, namely with the 
$A$-mean value $\mean{\nabla u}^A_B$ replaces by the standard mean
value~$\mean{\nabla u}_B$ on~$B$. The following lemma shows that the two
cases are the same up to a possible change of the constant~$\sigma$.
\begin{lemma}
  \label{lem:change-set}
  For all~$\sigma>0$ there exists~$\epsilon=\epsilon(\sigma)>0$ such
  that if $u$ satisfies the
  non-degeneracy condition~\eqref{eq:non-deg-cond-u} on~$B$
  with~$\epsilonDG \leq \epsilon$, then there holds
  \begin{align*}
    \left\{ x \sep \abs{\nabla u -
    \mean{\nabla u}_B} \geq \sigma \abs{\mean{\nabla u}_B}\right\}
    &\subseteq  \left\{x \sep \abs{\nabla u - \mean{\nabla u}_B^A}
      \geq \frac \sigma  8 \, \abs{\mean{\nabla u}_B^A} \right\}.
  \end{align*}
\end{lemma}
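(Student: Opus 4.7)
The plan is to apply Lemma~\ref{lem:B-non-deg} and a simple triangle inequality argument, choosing $\epsilon$ small enough in terms of $\sigma$ so that the difference between the standard average~$\mean{\nabla u}_B$ and the $A$-average~$\mean{\nabla u}_B^A$ is negligible compared to~$\sigma\,\abs{\mean{\nabla u}_B^A}$. The key observation is that the non-degeneracy condition~\eqref{eq:non-deg-cond-u} forces all three averages $\mean{\nabla u}_B$, $\mean{\nabla u}_B^V$, $\mean{\nabla u}_B^A$ to be comparable, with explicit closeness controlled by~$\sqrt{\epsilonDG}$.

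First, I would shrink $\epsilonDG \leq \epsilon \leq \epsilon_0$ so that Lemma~\ref{lem:B-non-deg} applies. Combining~\eqref{eq:B-non-deg1} and~\eqref{eq:B-non-deg3} from that lemma via the triangle inequality gives a constant $c_1>0$ (depending only on~$p$) such that
\begin{align*}
  \abs{\mean{\nabla u}_B - \mean{\nabla u}_B^A}
  &\leq \abs{\mean{\nabla u}_B - \mean{\nabla u}_B^V} + \abs{\mean{\nabla u}_B^V - \mean{\nabla u}_B^A}
  \leq c_1\sqrt{\epsilonDG}\,\abs{\mean{\nabla u}_B^A},
\end{align*}
together with the lower bound $\abs{\mean{\nabla u}_B} \geq \tfrac 14 \abs{\mean{\nabla u}_B^A}$ which follows by chaining the two-sided bounds $\abs{\mean{\nabla u}_B} \geq \tfrac 12 \abs{\mean{\nabla u}_B^V} \geq \tfrac 14 \abs{\mean{\nabla u}_B^A}$ from~\eqref{eq:B-non-deg1}.

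Next, suppose $x$ lies in the left-hand set. Then the triangle inequality and the two estimates above yield
\begin{align*}
  \abs{\nabla u(x) - \mean{\nabla u}_B^A}
  &\geq \abs{\nabla u(x) - \mean{\nabla u}_B} - \abs{\mean{\nabla u}_B - \mean{\nabla u}_B^A}
  \\
  &\geq \sigma\,\abs{\mean{\nabla u}_B} - c_1\sqrt{\epsilonDG}\,\abs{\mean{\nabla u}_B^A}
  \\
  &\geq \left( \tfrac{\sigma}{4} - c_1\sqrt{\epsilonDG} \right) \abs{\mean{\nabla u}_B^A}.
\end{align*}
Finally, choosing $\epsilon=\epsilon(\sigma)>0$ small enough so that $c_1\sqrt{\epsilon} \leq \tfrac{\sigma}{8}$ (in addition to $\epsilon\leq\epsilon_0$) gives $\abs{\nabla u(x) - \mean{\nabla u}_B^A} \geq \tfrac{\sigma}{8}\,\abs{\mean{\nabla u}_B^A}$, which places~$x$ in the right-hand set. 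This is a short, essentially routine argument; the only subtlety is to keep track of the two different ways in which $\epsilon$ must be taken small (via $\epsilon \leq \epsilon_0$ from Lemma~\ref{lem:B-non-deg} and via $c_1\sqrt{\epsilon}\leq \sigma/8$ here).
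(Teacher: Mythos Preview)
Your proof is correct and follows essentially the same route as the paper's own argument: both apply Lemma~\ref{lem:B-non-deg} to control $\abs{\mean{\nabla u}_B - \mean{\nabla u}_B^A}$ by $c\sqrt{\epsilonDG}\,\abs{\mean{\nabla u}_B^A}$ and to obtain $\abs{\mean{\nabla u}_B} \geq \tfrac14\abs{\mean{\nabla u}_B^A}$, then combine these with the triangle inequality and choose $\epsilon$ so that $c\sqrt{\epsilon}\leq \sigma/8$. The only cosmetic difference is that the paper routes the closeness estimate through $\abs{\mean{\nabla u}_B^V}$ in the intermediate line rather than converting to $\abs{\mean{\nabla u}_B^A}$ immediately as you do.
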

\begin{proof}
  For each
  $x$ with $\abs{\nabla u(x) - \mean{\nabla u}_B} \geq \sigma
    \abs{\mean{\nabla u}_B}$ and small enough~$\epsilon$ we estimate
  with Lemma~\ref{lem:B-non-deg}
  \begin{align*}
    \abs{\nabla u(x) - \mean{\nabla u}_B^A}
    &\geq 
      \abs{\nabla u(x) - \mean{\nabla u}_B} -
      \abs{\mean{\nabla u}_B^A - \mean{\nabla u}_B}
    \\
    &\geq \sigma\,\abs{\mean{\nabla u}_B} - c\, \sqrt{\epsilonDG} \,
    \abs{\mean{\nabla u}_B^V}.
    \\
    &\geq 
      \tfrac \sigma 4\,\abs{\mean{\nabla u}_B^A} - c\, \sqrt{\epsilon}\,
      \abs{\mean{\nabla u}_B^A}.
  \end{align*}
  So for $c \sqrt{\epsilon} \leq \frac \sigma 8$ we get
  \begin{align*}
    \abs{\nabla u(x) - \mean{\nabla u}_B^A} &\geq \frac \sigma 8\,\abs{\mean{\nabla u}_B^A}
  \end{align*}
  which proves the claim.
\end{proof}

In order to proceed towards the desired linear comparison result, let~$z$ denote the solution of the following linear system with
constant coefficients:
\begin{align}
  \label{eq:lin-comparison}
  \begin{alignedat}{2}
    -\divergence\big( (DA)(\mean{\nabla u}_{B}^A)\nabla z\big) &= 0
    &\qquad&\text{in~$B$},
    \\
    z &= u &&\text{on~$\partial B$}.
  \end{alignedat}
\end{align}
We know from linear theory that there exists some constant~$c>0$ such that for any~$\theta\in(0,1)$ we have
\begin{align}
  \label{eq:lin_Decay}
  \sup_{x,x'\in\theta B} \abs{\nabla z(x)-\nabla z(x')}
  \leq c\,\theta \dashint_B \abs{\nabla z-\mean{\nabla z}_B}\, dy.
\end{align} 
For vectors $P,Q  \in \setR^2$ let us define
\begin{align*}
  H(P,Q) &:= A(P) - A(Q) - (DA)(Q) \, (P-Q).
\end{align*}
Then we can use our original
system~\eqref{eq:plap},
\begin{align*}
  -\divergence\big(A(\nabla u)\big) &= - \divergence F,
\end{align*}
to conclude
\begin{align}
  \label{eq:linearized1}
  -\divergence\big( (DA)(\mean{\nabla u}_{B}^A) \nabla (u-z)\big)
  &=
    \divergence\big(H(\nabla
    u, \mean{\nabla u}_{B}^A) \big) - \divergence(F-\mean{F}_B).
\end{align}
In particular, the function~$w := u-z$ satisfies
\begin{alignat*}{2}
  -\divergence\big( (DA)(\mean{\nabla u}_{B}^A) \nabla w\big)
  &=
  \divergence\big(H(\nabla
  u, \mean{\nabla u}_{B}^A) \big) - \divergence(F-\mean{F}_B) &\qquad&
  \text{in $B$},
  \\
  w &= 0 &&\text{on $\partial B$}.
\end{alignat*}
It follows from Lemma~\ref{lem:hammer} for $P:=Q+t\xi$ with~$t \to 0$
and arbitrarily fixed~$Q,\xi \in \setR^2$
that the constant matrix $(DA)(Q)$ satisfies
\begin{align}
  \label{eq:ellipticity}
  c\, \abs{Q}^{p-2} \abs{\xi}^2
    \leq \big((DA)(Q)\, \xi\big) \cdot\xi \leq C\, \abs{Q}^{p-2}  \abs{\xi}^2.
\end{align}
As in \cite{DieLenStrVer12} and~\cite{BreCiaDieKuuSch17} we get the
following comparison estimate.
\begin{lemma}[Linear Comparison]
  \label{lem:comparison-linear}
  Let~$z$ be the solution of~\eqref{eq:lin-comparison}, then
  \begin{align*}
  \abs{\mean{\nabla u}_{B}^A}^{(p-2)p'} \dashint_{B}
        \abs{\nabla u - \nabla z}^{p'}\,dx
      &\leq c\, \dashint_{B} \abs{H(\nabla u, \mean{\nabla
          u}_{B}^A)}^{p'}\,dx + c\, \dashint_B \abs{F-\mean{F}_B}^{p'}\,dx.
   \end{align*}
\end{lemma}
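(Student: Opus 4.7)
The plan is to apply classical linear Calderón–Zygmund theory to the equation~\eqref{eq:linearized1} satisfied by $w := u - z$. Since $z = u$ on $\partial B$, we have $w \in W^{1,p}_0(B)$. Writing $Q := \mean{\nabla u}_B^A$ and $G := H(\nabla u, Q) - (F - \mean{F}_B)$, the identity preceding the statement tells us that $w$ solves the constant-coefficient linear equation
\begin{align*}
  -\divergence\big((DA)(Q)\,\nabla w\big) &= \divergence G \qquad \text{in } B,
\end{align*}
with zero Dirichlet data.

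The key observation is that by the ellipticity bound~\eqref{eq:ellipticity} the constant matrix $M := (DA)(Q)$ is comparable to $\abs{Q}^{p-2}$ times the identity, so that $\widetilde M := \abs{Q}^{-(p-2)} M$ is uniformly elliptic with constants depending only on $p$. Normalizing the equation accordingly, I would study
\begin{align*}
  -\divergence\big(\widetilde M\,\nabla w\big) &= \divergence\!\big(G/\abs{Q}^{p-2}\big) \quad \text{in } B, \qquad w = 0 \quad \text{on } \partial B.
\end{align*}

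I would then invoke the classical $L^{r}$ Calderón–Zygmund estimate for the Dirichlet problem for linear elliptic equations with constant, uniformly elliptic coefficients on a ball, applied with $r := p' \in (1,2]$ (which lies in the admissible range since $p \geq 2$). By scaling invariance of this estimate the resulting constant depends only on $p$ and is independent of the ball~$B$, giving
\begin{align*}
  \bigg(\dashint_B \abs{\nabla w}^{p'}\,dx\bigg)^{\frac{1}{p'}}
  &\leq \frac{c}{\abs{Q}^{p-2}}\bigg(\dashint_B \abs{G}^{p'}\,dx\bigg)^{\frac{1}{p'}}.
\end{align*}
Raising to the $p'$-th power, multiplying through by $\abs{Q}^{(p-2)p'}$, and using convexity to split
\begin{align*}
  \abs{G}^{p'} \;\lesssim\; \abs{H(\nabla u, Q)}^{p'} + \abs{F - \mean{F}_B}^{p'}
\end{align*}
then yields the claimed inequality, since $\nabla w = \nabla u - \nabla z$.

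The only substantive ingredient is the linear $L^{p'}$ Calderón–Zygmund estimate for the constant-coefficient Dirichlet problem on a ball; everything else is bookkeeping of the degenerate prefactor $\abs{Q}^{p-2}$ and the ellipticity bound~\eqref{eq:ellipticity}. This is precisely why the statement is cited as coming from \cite{DieLenStrVer12} and \cite{BreCiaDieKuuSch17}: no ingredient beyond linear theory is needed at this step, and the only mild subtlety is making sure the rescaling by $\abs{Q}^{p-2}$ preserves the sharp scaling appearing in the conclusion.
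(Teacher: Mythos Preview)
Your proposal is correct and matches the paper's proof essentially line for line: set $w=u-z$, rewrite as a constant-coefficient Dirichlet problem with right-hand side $\divergence G$, use the ellipticity~\eqref{eq:ellipticity} to normalize, and invoke linear $L^{p'}$ theory (the paper cites \cite[Lemma~2]{DolM95} for this step). The only cosmetic difference is that you rescale the matrix explicitly before applying the linear estimate, whereas the paper absorbs the factor $\abs{Q}^{p-2}$ directly into the statement of the cited linear result.
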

\begin{proof}
  If $w := u-z$, then it is the solution to a linear system of the form
  \begin{align*}
    -\divergence(\mathcal{B} \nabla w) &= - \divergence G
    &&\text{in~$B$},
    \\
    w &= 0 &&\text{on $\partial B$},
  \end{align*}
  where we put $\mathcal{B}:=(DA)(\mean{\nabla u}_{B}^A)$ and $G :=
  -H(\nabla u, \mean{\nabla u}_{B}^A) + (F-\mean{F}_B)$. Due
  to~\eqref{eq:ellipticity} with $Q=\mean{\nabla
    u}_{B}^A$, we have the
  ellipticity condition
  \begin{align*}
    c\, \abs{\mean{\nabla u}_{B}^A}^{p-2} \abs{\xi}^2
    \leq \mathcal{B} \xi \cdot \xi \leq C\, \abs{\mean{\nabla
    u}_{B}^A}^{p-2}  \abs{\xi}^2, \qquad \xi\in\setR^2.
  \end{align*}
  Therefore we can apply the classical $L^{p'}$-regularity result for
  systems with constant coefficients, see \cite[Lemma~2]{DolM95}, to
  conclude
  \begin{align*}
    \bignorm{ \abs{\mean{\nabla u}_{B}^A}^{p-2} \nabla w}_{L^{p'}(B)}
    &\leq c\, \norm{G}_{L^{p'}(B)}.
  \end{align*}
  Thus, the definitions of~$w$ and~$G$ prove the claim.
\end{proof}
In order to estimate the $H$-term in
Lemma~\ref{lem:comparison-linear} we need the following Lemma.
\begin{lemma}
  \label{lem:est-H}
  If $p\geq 2$, then we have 
  \begin{align*}
    \abs{H(P,Q)} &\leq c\, \abs{A(P) - A(Q)} \frac{\abs{P-Q}}{\abs{Q}
                   + \abs{P-Q}}
  \end{align*}
  for all $P,Q \in \setR^2$.
\end{lemma}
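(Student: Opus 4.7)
The plan is to prove the inequality by splitting into two regimes based on whether $P$ is close to $Q$ (where the estimate is a quantitative Taylor remainder bound) or far from $Q$ (where the claim reduces to a crude size estimate because the factor $\tfrac{|P-Q|}{|Q|+|P-Q|}$ is of order one).

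First I would treat the \emph{non-degenerate regime} $|P-Q|\le \tfrac12 |Q|$. Here the whole segment $\{Q+t(P-Q):t\in[0,1]\}$ stays bounded below and above by constant multiples of $|Q|$, so the map $A$ is $C^2$ along the segment and $|D^2 A(R)|\lesssim |R|^{p-3}$ there. Writing the Taylor remainder in integral form,
\begin{equation*}
  H(P,Q)=\int_0^1\bigl[(DA)(Q+t(P-Q))-(DA)(Q)\bigr]\,(P-Q)\,dt,
\end{equation*}
this gives $|H(P,Q)|\lesssim |Q|^{p-3}|P-Q|^2$. Since $|P|+|Q|\eqsim |Q|+|P-Q|\eqsim |Q|$ in this regime, Lemma~\ref{lem:hammer} yields $|A(P)-A(Q)|\eqsim |Q|^{p-2}|P-Q|$, and a direct comparison shows that the right-hand side of the claimed inequality is also of order $|Q|^{p-3}|P-Q|^2$.

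Next I would handle the \emph{degenerate regime} $|P-Q|>\tfrac12 |Q|$. There the factor $\tfrac{|P-Q|}{|Q|+|P-Q|}$ is bounded below by a constant, so it suffices to prove $|H(P,Q)|\lesssim |A(P)-A(Q)|$. The triangle inequality together with $|P-Q|>\tfrac12|Q|$ gives $|P|\lesssim |P-Q|$ and $|Q|\lesssim |P-Q|$, so the three terms in $H(P,Q)=A(P)-A(Q)-(DA)(Q)(P-Q)$ are each bounded by a constant times $|P-Q|^{p-1}$ using the trivial bounds $|A(R)|=|R|^{p-1}$ and $|(DA)(Q)|\lesssim |Q|^{p-2}$. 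On the other hand, $|P|+|Q|\ge |P-Q|$ by the triangle inequality and $p\ge 2$ together give $(|P|+|Q|)^{p-2}\ge |P-Q|^{p-2}$, whence $|A(P)-A(Q)|\eqsim (|P|+|Q|)^{p-2}|P-Q|\gtrsim |P-Q|^{p-1}$, and the two regimes combine to finish the proof.

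I do not expect a serious obstacle: the only delicate point is ensuring that the integral form of the Taylor remainder is applicable in the non-degenerate regime, which is true because the segment between $Q$ and $P$ is bounded away from the origin where $A$ fails to be smooth for $p$ non-integer. The restriction $p\ge 2$ enters in two places, namely the bound $(|P|+|Q|)^{p-2}\ge |P-Q|^{p-2}$ in the far regime and the pointwise bound $|(DA)(R)|\lesssim |R|^{p-2}$ (without any regularization needed) in the close regime.
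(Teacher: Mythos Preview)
Your proof is correct and follows essentially the same approach as the paper: both split into the two regimes $|P-Q|\le \tfrac12|Q|$ and $|P-Q|>\tfrac12|Q|$, use the integral Taylor remainder with the bound $|D^2A(R)|\lesssim |R|^{p-3}$ in the first, and reduce to $|H(P,Q)|\lesssim |A(P)-A(Q)|$ in the second. The only cosmetic difference is that in the far regime the paper estimates $|H(P,Q)|\le |A(P)-A(Q)|+|(DA)(Q)|\,|P-Q|$ and bounds the second term by $(|Q|+|P-Q|)^{p-2}|P-Q|\eqsim |A(P)-A(Q)|$ directly via Lemma~\ref{lem:hammer}, whereas you bound all three terms of $H$ by $|P-Q|^{p-1}$ and then bound $|A(P)-A(Q)|$ from below by the same quantity.
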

\begin{proof}
	We have to distinguish two cases.
	
  \emph{Case $\abs{P-Q} \geq \frac 12 \abs{Q}$:} We can apply Lemma~\ref{lem:hammer} for $p\geq 2$ to conclude
  \begin{align*}
    \abs{H(P,Q)} &\leq \abs{A(P) - A(Q)} + \abs{(DA)(Q)} \, \abs{P-Q}
    \\
                 &\leq \abs{A(P) - A(Q)} + c\, \abs{Q}^{p-2} \abs{P-Q}
    \\
                 &\leq \abs{A(P) - A(Q)} + c\, (\abs{Q} +
                   \abs{P-Q})^{p-2} \abs{P-Q} 
    \\
                 &\leq c\, \abs{A(P) - A(Q)}
                   \\
    &\leq  c\, \abs{A(P) - A(Q)} \frac{\abs{P-Q}}{\abs{Q}
                   + \abs{P-Q}}.
  \end{align*}
  
  \emph{Case $\abs{P-Q} < \frac 12 \abs{Q}$:}  Using $\abs{(D^2 A)(R)}
  \leq c\, \abs{R}^{p-3}$, we obtain with Taylor's formula
  \begin{align*}
    \abs{H(P,Q)} &= \biggabs{\int_0^1 \Big((DA)(Q+t(P-Q)) -
                   (DA)(Q)\Big)\,dt \,(P-Q)}
    \\
    &\leq \int_0^1  t \int_0^1 \abs{(D^2A)(Q -st(P-Q))} \,ds\,dt \,\abs{P-Q}^2 \\
                &\leq c\, \int_0^1 \int_0^1\abs{Q-st(P-Q)}^{p-3}\,ds 
                  \,dt \, \abs{P-Q}^2.
  \end{align*}
  Since $\abs{P-Q} < \frac 12 \abs{Q}$, we have
  $\tfrac 12 \abs{Q} \leq \abs{Q - st(P-Q)} \leq \abs{Q} + \abs{P-Q} \leq 2 \abs{Q}$ for all $s,t\in(0,1)$. Thus, Lemma~\ref{lem:hammer} yields
  \begin{align*}
    \abs{H(P,Q)}       
                        &\leq c\, \big( \abs{Q} + \abs{P-Q} \big)^{p-3} \abs{P-Q}^2
                        \leq  c\, \abs{A(P) - A(Q)} \frac{\abs{P-Q}}{\abs{Q}
                          + \abs{P-Q}}
  \end{align*}
  and the proof is complete.
\end{proof}
We are now prepared to prove the $A$-decay in the non-degenerate case.
\begin{proposition}
  \label{pro:main-A-decay-nondeg}
  Let $\theta \in (0,1)$. Then there
  exist~$\epsilon=\epsilon(\theta)>0$ and $c_\theta>0$ such that if~$u$ satisfies the
  non-degeneracy condition~\eqref{eq:non-deg-cond-u} on $B$ with $\epsilonDG
  \leq \epsilon$, then
  \begin{align*}
    \lefteqn{ \bigg(\dashint_{\theta B }\abs{A(\nabla u)-\mean{A(\nabla u)}_{\theta
    B}}^{p'}\, dx\bigg)^{\frac 1 {p'}} } \qquad
    &
    \\
    &\le 
      c\,\theta\, \bigg(\dashint_{2B}\abs{A(\nabla u)-\mean{A(\nabla
      u)}_{2B}}^{p'}\, dx \bigg)^{\frac 1 {p'}} + c_\theta
      \bigg(\dashint_{2B} \abs{F-\mean{F}_{2B}}^{p'}\, dx\bigg)^{\frac 1 {p'}}.  
  \end{align*}
\end{proposition}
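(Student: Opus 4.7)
The plan is to linearize \eqref{eq:plap2} around the frozen gradient $Q := \mean{\nabla u}_B^A$, to compare $u$ with the solution $z$ of the linear Dirichlet problem \eqref{eq:lin-comparison}, and to exploit the linear decay \eqref{eq:lin_Decay} for $z$. Writing
\begin{align*}
A(\nabla u) = (DA)(Q)\nabla u + \bigl[A(Q) - (DA)(Q)Q\bigr] + H(\nabla u, Q)
\end{align*}
and choosing the constant $G_0 := A(Q) - (DA)(Q)Q + (DA)(Q)\mean{\nabla z}_{\theta B}$, we obtain the pointwise splitting $A(\nabla u) - G_0 = T_1 + T_2 + T_3$ with
\begin{align*}
T_1 := (DA)(Q)\bigl(\nabla z - \mean{\nabla z}_{\theta B}\bigr), \quad T_2 := (DA)(Q)(\nabla u - \nabla z), \quad T_3 := H(\nabla u, Q).
\end{align*}
By Lemma~\ref{lem:osc-aux} the left-hand side of the proposition is bounded by $2\,(\dashint_{\theta B}|A(\nabla u)-G_0|^{p'}\,dx)^{1/p'}$, so it suffices to estimate each $T_i$ in $L^{p'}(\theta B)$ separately.

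The term $T_1$ is handled by the linear decay \eqref{eq:lin_Decay}: together with $|(DA)(Q)| \lesssim |Q|^{p-2}$ and the Gauss--Green identity $\mean{\nabla z}_B = \mean{\nabla u}_B$ (valid since $z=u$ on $\partial B$), this yields an $L^\infty$-bound on $\theta B$ of the form $c\,\theta\,|Q|^{p-2}\dashint_B \bigl(|\nabla z - \nabla u| + |\nabla u - \mean{\nabla u}_B|\bigr)\,dy$. Lemma~\ref{lem:comparison-linear} absorbs the first summand, while for the second the pointwise inequality $|Q|^{p-2}|\nabla u - Q| \lesssim |A(\nabla u) - A(Q)|$ (valid for $p \geq 2$ by Lemma~\ref{lem:hammer}) converts it into the oscillation of $A(\nabla u)$ on $B$. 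The term $T_2$ is bounded directly by Lemma~\ref{lem:comparison-linear} after the trivial inclusion $\theta B \subset B$, which costs a factor $\theta^{-d/p'}$ to be cancelled below by the smallness of the linearization error.

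The decisive step is the control of the $H$-contributions appearing in $T_3$ and, through Lemma~\ref{lem:comparison-linear}, in $T_2$. Lemma~\ref{lem:est-H} gives $|H(\nabla u, Q)| \lesssim |A(\nabla u) - A(Q)|\cdot|\nabla u - Q|/(|Q|+|\nabla u - Q|)$, which off the degeneracy set $E_\sigma := \{|\nabla u - Q| \geq \sigma|Q|\}$ is smaller than $c\,\sigma\,|A(\nabla u) - \mean{A(\nabla u)}_B|$. Splitting the $H$-integral along $E_\sigma$ yields
\begin{align*}
\dashint_B |H|^{p'}\,dx \lesssim \sigma^{p'}\dashint_B |A(\nabla u)-\mean{A(\nabla u)}_B|^{p'}\,dx + \dashint_B |A(\nabla u) - \mean{A(\nabla u)}_B|^{p'}\chi_{E_\sigma}\,dx,
\end{align*}
and Lemma~\ref{lem:decay-A-subset} controls the last integral by
\begin{align*}
c\,\sigma^{-2p}\tau^{2\gamma}\dashint_{2B}|A(\nabla u) - \mean{A(\nabla u)}_{2B}|^{p'}\,dx + c\,\sigma^{-2p}\tau^{-d}\dashint_{2B}|F-\mean{F}_{2B}|^{p'}\,dx.
\end{align*}
The same dichotomy applies to the $H$-integral on $\theta B$ arising in $T_3$, after paying a factor $\theta^{-d}$ from $\theta B \subset B$; the $B$-averages are finally majorized by $2B$-averages through Lemma~\ref{lem:osc-aux} and the trivial inclusion.

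Collecting all contributions, the prefactor in front of the $A(\nabla u)$-oscillation on $2B$ is (up to absolute constants) of the form $\theta + \theta^{-d/p'}\bigl(\sigma + \sigma^{-2p/p'}\tau^{2\gamma/p'}\bigr)$, while every remaining term carries a $\theta$-dependent constant in front of an $F$-oscillation on $2B$ and is absorbed into $c_\theta$. We therefore first fix $\sigma \sim \theta^{1+d/p'}$ so that $\theta^{-d/p'}\sigma \lesssim \theta$, then take $\tau$ small enough (depending on $\theta$ through $\sigma$) so that $\theta^{-d/p'}\sigma^{-2p/p'}\tau^{2\gamma/p'} \leq \theta$, and finally choose $\epsilonDG \leq \epsilon(\sigma,\tau)$ so small that Lemmas~\ref{lem:B-non-deg}, \ref{lem:tauB-average}, and \ref{lem:decay-A-subset} are all applicable. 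The main obstacle is precisely this hierarchical parameter choice: the linearization error must be made smaller than $\theta^{1+d/p'}$ to beat the $\theta^{-d/p'}$-loss from $\theta B \subset B$, forcing $\tau$ to scale accordingly via Lemma~\ref{lem:decay-A-subset}, and in turn $\epsilonDG$ has to be even smaller in order to propagate the non-degeneracy condition through the covering underlying that lemma.
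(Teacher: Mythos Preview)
Your proof is correct and follows essentially the same strategy as the paper: linearize around $Q=\mean{\nabla u}_B^A$, compare with the solution~$z$ of~\eqref{eq:lin-comparison}, use the linear decay~\eqref{eq:lin_Decay} and Lemma~\ref{lem:comparison-linear}, estimate the Taylor remainder~$H$ via Lemma~\ref{lem:est-H}, split along the degeneracy set~$E_\sigma$, and close with Lemma~\ref{lem:decay-A-subset} and the same hierarchical choice $\sigma=\sigma(\theta)$, $\tau=\tau(\sigma,\theta)$, $\epsilonDG=\epsilon(\sigma,\tau)$.

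The organizational difference is that the paper first splits the $\theta B$-integral itself along $\{|\nabla u-\mean{\nabla u}_{\theta B}|\gtrless\sigma|\mean{\nabla u}_{\theta B}|\}$ (its terms~$\mathrm{II}$ and~$\mathrm{III}$), applying Lemma~\ref{lem:decay-A-subset} to~$\mathrm{II}$ and the linear comparison only to~$\mathrm{III}$, whereas you write the exact identity $A(\nabla u)-G_0=T_1+T_2+T_3$ globally and defer the $E_\sigma$-splitting to the $H$-integral alone. Your route is slightly more direct: it avoids Lemma~\ref{lem:change-set} (since you work with~$\mean{\nabla u}_B^A$ rather than~$\mean{\nabla u}_{\theta B}$) and it does not need to pass the non-degeneracy condition from~$B$ to~$\theta B$ via Lemma~\ref{lem:tauB-average} for that initial splitting. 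The paper's route, in turn, makes the linearization step a bit more transparent by first restricting to the set where $|\nabla u-\mean{\nabla u}_{\theta B}|$ is small. Both arguments produce the identical prefactor structure $\theta+\theta^{-d/p'}\sigma+\theta^{-d/p'}\sigma^{-2p/p'}\tau^{2\gamma/p'}$ and hence the same parameter hierarchy.
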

\begin{proof}
  According to Lemma~\ref{lem:osc-aux} we have
  \begin{align*}
  	\mathrm{I} &:= \dashint_{\theta B} \abs{A(\nabla u) - \mean{A(\nabla u)}_{\theta
  			B}}^{p'}\,dx
  	\\
  	&\lesssim
  	\dashint_{\theta B} \abs{A(\nabla u) - \mean{A(\nabla u)}_{\theta
  			B}}^{p'} \chi_{\set{\abs{\nabla u -
  				\mean{\nabla u}_{\theta B}} \geq \sigma \abs{\mean{\nabla u}_{\theta B}}}} \,dx
  	\\
  	&\qquad 
  	+     \dashint_{\theta B} \abs{A(\nabla u) - A(\mean{\nabla u}_{\theta
  			B})}^{p'} \chi_{\set{\abs{\nabla u -
  				\mean{\nabla u}_{\theta B}} < \sigma \abs{\mean{\nabla u}_{\theta B}}}} \,dx
  	\\
  	&=: \mathrm{II} + \mathrm{III}.
  \end{align*}
  If~$\epsilonDG$ is sufficiently small (depending on~$\theta$), it
follows from Lemma~\ref{lem:tauB-average} that~$u$ also satisfies
the non-degeneracy condition~\eqref{eq:non-deg-cond-u}
on~$\theta B$. Therefore, we get by Lemmata~\ref{lem:change-set}, \ref{lem:decay-A-subset}, and \ref{lem:osc-aux} for any~$0<\tau < \tfrac 14$
  \begin{align*}
    \mathrm{II} 
    &\leq \dashint_{\theta B} \abs{A(\nabla u) - \mean{A(\nabla u)}_{\theta B}}^{p'} \chi_{ \left\{\abs{\nabla u - \mean{\nabla u}^A_{\theta B}} \geq \tfrac \sigma 8
         \abs{\mean{\nabla u}^A_{\theta B}}\right\}} \,dx
    \\
             &\lesssim \sigma^{-2p} \tau^{2\gamma} \bigg(\dashint_{2
               \theta B}
      \abs{A(\nabla u)- \mean{A(\nabla u)}_{2\theta B}}^{p'} \,dx +
      \tau^{-d-2\gamma}
      \dashint_{2 \theta B} \abs{F-\mean{F}_{2 \theta B}}^{p'} \,dx \bigg)
    \\
             &\lesssim \sigma^{-2p} \tau^{2\gamma} \theta^{-d} \bigg(\dashint_{2
               B}
      \abs{A(\nabla u)- \mean{A(\nabla u)}_{2 B}}^{p'} \,dx +
      \tau^{-d-2\gamma}
      \dashint_{2 B} \abs{F-\mean{F}_{2B}}^{p'} \,dx \bigg).
  \end{align*}
  Later we will take~$\tau=\tau(\sigma,\theta)$ small such that the factor in front of the $A$-oscillation on~$2B$ is small. Of course, the price will be a large
  factor in front of the $F$-oscillation.

  Let us now estimate~ $\mathrm{III}$. We apply Lemma~\ref{lem:hammer} and use that  $\sigma \leq 1$ and $p\geq 2$ to obtain
  \begin{align*}
    \mathrm{III} &\lesssim \!\dashint_{\theta B} \!\!\Big[ (\abs{ \mean{\nabla u}_{\theta B}}
         + \abs{\nabla u \!-\! \mean{\nabla u}_{\theta B}})^{p-2} \abs{
                     \nabla u \!-\! \mean{\nabla u}_{\theta B}} \Big]^{p'}  \!
                     \chi_{\set{\abs{\nabla u -
                     \mean{\nabla u}_{\theta B}} < \sigma \abs{\mean{\nabla u}_{\theta B}}}} dx
    \\
                   &\lesssim \abs{ \mean{\nabla u}_{\theta
                     B}}^{(p-2)p'}
                     \dashint_{\theta B}  \abs{
                     \nabla u - \mean{\nabla u}_{\theta B}}^{p'} 
                     \,dx.
  \end{align*}
  The latter integral can be estimated further in terms of the solution~$z$ of the linearized equation~\eqref{eq:lin-comparison}. Indeed, from the linear decay~\eqref{eq:lin_Decay} of solutions to linear systems and $\theta^{p'}<1<\theta^{-d}$ it follows
  \begin{align*}
  	\dashint_{\theta B}  \abs{\nabla u - \mean{\nabla u}_{\theta B}}^{p'} \,dx
  	&\lesssim \dashint_{\theta B}  \abs{\nabla z - \mean{\nabla
     z}_{\theta B}}^{p'} \,dx + \dashint_{\theta B}  \abs{\nabla u -
     \nabla z}^{p'} \,dx
    \\
  	&\lesssim \sup_{x,x' \in \theta B}  \abs{\nabla z(x) - \nabla
     z(x')}^{p'} + \dashint_{\theta B}  \abs{\nabla u -
     \nabla z}^{p'} \,dx
    \\
  	&\lesssim \theta^{p'} \dashint_{B}  \abs{\nabla z -
     \mean{\nabla z}_{B}}^{p'} \,dx + \theta^{-d}\dashint_{B}
     \abs{\nabla u - \nabla z}^{p'} \,dx
    \\
  	&\lesssim \theta^{p'} \dashint_{B}  \abs{\nabla u - \mean{\nabla u}_{B}}^{p'} \,dx + \theta^{-d}\dashint_{B}  \abs{\nabla u - \nabla z}^{p'} \,dx.
  \end{align*}
 In addition, Lemma~\ref{lem:tauB-average} and
  Lemma~\ref{lem:B-non-deg} imply
  \begin{align*}
    \abs{\mean{\nabla u}_{\theta
          B}} \eqsim \abs{\mean{ \nabla u}_B} \eqsim  \abs{\mean{ \nabla u}^A_B}.
  \end{align*}
  Since $p\geq 2$ we can use this estimate to conclude that
  \begin{align*}
    \mathrm{III} 
        &\lesssim \theta^{p'} \, \abs{\mean{\nabla u}_{B}^A}^{(p-2)p'}  \dashint_{B}  \abs{\nabla u -
          \mean{\nabla u}_{B}}^{p'} \,dx 
+ \theta^{-d} \, \abs{\mean{\nabla u}_{B}^A}^{(p-2)p'}   \dashint_{B}  \abs{\nabla u -
          \nabla z}^{p'} \,dx
    \\
    &=: \mathrm{III}_1+\mathrm{III}_2. 
  \end{align*}
  Now we apply Lemmata~\ref{lem:osc-aux} and \ref{lem:hammer} with $p \geq 2$ to obtain the following bound on~$\mathrm{III}_1$: 
  \begin{align}
      \mathrm{III}_1  
      &\lesssim \theta^{p'} \, \abs{\mean{\nabla
          u}^A_{B}}^{(p-2)p'}  \dashint_{B}  \abs{\nabla u - 
        \mean{\nabla u}_{B}^A}^{p'} \,dx   
      \nonumber\\
      &\lesssim \theta^{p'} \,   \dashint_{B}  \abs{A(\nabla u) - A(
        \mean{\nabla u}_{B}^A)}^{p'} \,dx
      \nonumber\\
      &= \theta^{p'} \,   \dashint_{B}  \abs{A(\nabla u) - \mean{A(
        \nabla u)}_{B}}^{p'} \,dx.
          \label{eq:III_1}
  \end{align}
  
  Next, for $\mathrm{III}_2$ we use the linear comparison of
  Lemma~\ref{lem:comparison-linear} and the estimate for~$H$ from
  Lemma~\ref{lem:est-H} to deduce
  \begin{align*}
    \mathrm{III}_2 &\le \theta^{-d} c \dashint_{B} \abs{H(\nabla
            u, \mean{\nabla u}_{B}^A)}^{p'}\,dx + \theta^{-d}
            \dashint_B \abs{F-\mean{F}_B}^{p'}\,dx 
    \\
          &\lesssim \theta^{-d} \dashint_B \abs{A(\nabla u)-A(\mean{\nabla
            u}^A_{B})}^{p'}
            \bigg( \frac{\abs{\nabla u-\mean{\nabla
            u}^A_{B}}}{\abs{\mean{\nabla u}_{B}^A}+\abs{\nabla
            u-\mean{\nabla 
            u}^A_{B}}} \bigg)^{p'}\, dx
+ \theta^{-d}
            \dashint_{B}\abs{F-\mean{F}_B}^{p'}\, dx
    \\
          &=: \mathrm{III}_{2}^A + \mathrm{III}_{2}^F,
  \end{align*}
where for $\sigma,\tau <1$
  \begin{align*}
	\mathrm{III}_2^F 
	&\lesssim \sigma^{-2p} \tau^{-d} \theta^{-d}  \dashint_{2B}\abs{F-\mean{F}_{2B}}^{p'}\, dx.
\end{align*}
  Note that on the set
  $\set{x \sep \abs{\nabla u - \mean{\nabla u}_{B}^A} < \sigma
    \abs{\mean{\nabla u}_{B}^A}}$ the fraction in the integral
  of~$\mathrm{III}_{2}^A$ is smaller than~$\sigma$, while on its complement it is 
  bounded by 
  one. 
  Hence,
  \begin{align*}
    \mathrm{III}_{2}^A 
    &\leq \theta^{-d} \dashint_B \abs{A(\nabla u)-\mean{A(\nabla
      u)}_{B}}^{p'} \chi_{\set{ \abs{\nabla u - \mean{\nabla
      u}_{B}^A} \geq \sigma \abs{\mean{\nabla u}_{B}^A}}}\,dx
    \\
    &\quad + \sigma^{p'}\,\theta^{-d} \dashint_B\abs{A(\nabla u)- \mean{A(\nabla
      u)}_{B} }^{p'}\, \chi_{\set{ \abs{\nabla u - \mean{\nabla
      u}_{B}^A} < \sigma \abs{\mean{\nabla u}_{B}^A }}}\,dx,
  \end{align*}
  where we have also used $\mean{A(\nabla u)}_{B} = A(\mean{\nabla u}^A_{B})$.
  Clearly, the second integral is small for small~$\sigma>0$. 
  Moreover, the first one can be estimated further by Lemma~\ref{lem:decay-A-subset} provided that~$\epsilon = \epsilon(\sigma,\tau) \geq \epsilonDG$ is small enough.
  Overall, in combination with~\eqref{eq:III_1} we arrive at
  \begin{align*}
  	\mathrm{III}
  	&\lesssim \mathrm{III_1} + \mathrm{III}_{2}^A + \mathrm{III}_{2}^F 
  	\\
    &\lesssim  \theta^{p'} \, \dashint_{B}  \abs{A(\nabla u) - \mean{A(
    		\nabla u)}_{B}}^{p'} \,dx \\
     &\quad + \sigma^{-2p} \tau^{2\gamma} \theta^{-d} \bigg(\dashint_{2B}
      \abs{A(\nabla u)- \mean{A(\nabla u)}_{2B}}^{p'} \,dx 
      + \tau^{-d-2\gamma} \dashint_{2B} \abs{F-\mean{F}_{2B}}^{p'} \,dx \bigg)
    \\
    &\quad + \sigma^{p'} \theta^{-d} \dashint_B\abs{A(\nabla u)-\mean{A(\nabla
      u)}_{B}}^{p'}\,dx. 
\end{align*}
  Recall that also
  \begin{align*}
    \dashint_{B}  \abs{A(\nabla u) - \mean{A(
    \nabla u)}_{B}}^{p'} \,dx &\leq c\,
                                \dashint_{2B}  \abs{A(\nabla u) - \mean{A(
                                \nabla u)}_{2B}}^{p'} \,dx.
  \end{align*}
  Thus, combing the estimates for $\mathrm{II}$ and $\mathrm{III}$ we obtain our final estimate
  \begin{align*}
    \mathrm{I}
    &\lesssim \Big(\theta^{p'} + \sigma^{-2p} \tau^{2\gamma} \theta^{-d} + \sigma^{p'}\theta^{-d} \Big)\,   \dashint_{2B}  \abs{A(\nabla u) - \mean{A(
      \nabla u)}_{2B}}^{p'} \,dx
    \\
    &\quad + \sigma^{-2p} \tau^{-d} \theta^{-d}
         \dashint_{2B} \abs{F-\mean{F}_{2B}}^{p'} \,dx.
  \end{align*}
  Now, we choose the parameters in the following order: Given $\theta\in(0,1)$, we first
  choose~$\sigma= \sigma(\theta)>0$ small enough such that
  $\sigma^{p'}\theta^{-d} \leq \theta^{p'}$. Second, we choose
  $\tau=\tau(\theta,\sigma,\gamma) = \tau(\theta)>0$ small such that also 
  $\sigma^{-2p} \tau^{2\gamma} \theta^{-d} \leq
  \theta^{p'}$. Moreover, for the validity of the above estimates
  (applicability of Lemma~\ref{lem:decay-A-subset}) we have to choose
  $\epsilon= \epsilon(\sigma,\tau)=\epsilon(\theta)>0$ small. Taking
  the $p'$-root then proves our desired $A$-decay.
\end{proof}

\subsection{Proof of Theorems~\ref{thm:osc-estimates} and~\ref{thm:osc-estimates-general}}
\label{ssec:proof-theorem-osc-est}
We will now combine the estimates for the degenerate and the
non-degenerate case to prove the main results of this section, namely
Theorems~\ref{thm:osc-estimates} and~\ref{thm:osc-estimates-general}.
\begin{proof}[Proof of Theorem~\ref{thm:osc-estimates}]
  Let $\beta \in (0,1)$. Define~$\beta_2 := \frac{1+\beta}{2}$ such that
  $\beta < \beta_2 < 1$.  We will combine
  Propositions~\ref{pro:decay-u-deg} (with~$\beta_2$) and~\ref{pro:main-A-decay-nondeg} to prove our claim. 
  To this end, we first choose~$\theta \in (0,1)$ so small such that $\theta_0:=\tfrac{1}{2}\theta$ satisfies
  $c\, \theta^{\beta_2} + c\, \theta \leq \theta_0^\beta$, where
  $c\,\theta^{\beta_2}$ is from
  Proposition~\ref{pro:decay-u-deg} and $c\, \theta$ from
  Proposition~\ref{pro:main-A-decay-nondeg}. 
  In particular this determines~$\epsilon=\epsilon(\theta)$ in
  Proposition~\ref{pro:main-A-decay-nondeg}. 
  
  If $u$ satisfies the
  non-degeneracy condition~\eqref{eq:non-deg-cond-u} on~$\frac 12 B$,
  then the claim follows by
  Proposition~\ref{pro:main-A-decay-nondeg}. If, however,~\eqref{eq:non-deg-cond-u} is not
  satisfied on~$\frac 12 B$, then we can apply
  Proposition~\ref{pro:decay-u-deg} to deduce our claim.
\end{proof}

\begin{proof}[Proof of Theorem~\ref{thm:osc-estimates-general}]
  It follows by repeated use of Theorem~\ref{thm:osc-estimates} that for our fixed~$\theta_0$ and all $k\in\setN$ there holds
  \begin{align}
    \label{eq:osc-est-gen-1}
    \osc_{p'} A(\nabla u)(x,\theta_0^k t) \leq \theta_0^{k \beta}
    \osc_{p'} A(\nabla u)(x,t) + c \sum_{j=0}^{k-1}
    \theta_0^{(k-1-j)\beta} \osc_{p'} F(x, \theta_0^j t).
  \end{align}
  Now, let $\theta \in (0,1)$. Then we
  find~$k \in \setN_0$ such that
  $\theta_0^{k+1} \leq \theta \leq \theta_0^k$.  
  Moreover, note that for
  general~$G \in L^{p'}$, any $\lambda \in [\theta_0,1]$, and $s>0$ we have 
  \begin{align}
    \label{eq:osc-est-gen-2}
    \osc_{p'}G(x, \theta_0 s) \lesssim
    \osc_{p'}G(x, \lambda s) \lesssim 
    \osc_{p'}G(x, s)
  \end{align}
  with constants only depending on
  the fixed~$\theta_0$.  Thus, the claim follows
  from~\eqref{eq:osc-est-gen-1} in a standard way by changing the
  discrete sum by an integral using~\eqref{eq:osc-est-gen-2}. This
  step also introduces the constant in front of~$\theta^\beta$.
\end{proof}

\subsection{Consequences and remarks}
\label{ssec:consequences}

In this section we present a few consequences of
Theorem~\ref{thm:osc-estimates}.  
Let us begin with how our estimates
improve the results in~\cite{BreCiaDieKuuSch17}, where pointwise
regularity estimates have been proven for the system version
of~\eqref{eq:plap} with $1<p < \infty$ and~$\Omega \subset \Rd$. As an
important intermediate step they prove an assertion very similar to our 
Theorem~\ref{thm:osc-estimates}. For the case~$p \geq 2$ this result
reads as follows:
\begin{proposition}[{\cite[Proposition~2.1]{BreCiaDieKuuSch17}}]
  \label{pro:5authors}
  For~$\Omega \subset \setR^d$ and $p\geq 2$ let $\gamma$ denote the best possible exponent in Lemma~\ref{lem:V-decay}. If $u$ solves~\eqref{eq:plap}, then for $\beta \in (0,\gamma)$ there exists~$\theta_0 \in (0,1)$ and $c_\beta>0$ such that\footnote{It
  	is stated in~\cite[Proposition~2.1]{BreCiaDieKuuSch17} that $\beta
  	\in (0, \min \set{1, \gamma \frac{2}{p'}})$, but this is a typo. It
  	should be $\beta
  	\in (0, \min \set{1, \gamma \frac{2}{\overline{p}'}})$ with $\overline{p} =
  	\min \set{p,2}$.}
  \begin{align*}
   & \bigg(\dashint_{\theta_0 B} \abs{A(\nabla u) - \mean{A(\nabla u)}_{\theta_0
    B}}^{p'} \,dx \bigg)^{\frac 1{p'}}
\\
    &\qquad\leq \theta_0^\beta \bigg(
    \dashint_{B} \abs{A(\nabla u) - \mean{A(\nabla u)}_{
    		B}}^{p'} \,dx \bigg)^{\frac 1{p'}}
    + c_\beta\, \bigg(
      \dashint_{B} \abs{F - \mean{F}_{
      B}}^{p'} \,dx \bigg)^{\frac 1{p'}}.
  \end{align*}
\end{proposition}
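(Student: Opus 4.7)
The plan is to follow exactly the same two-case scheme used in the proof of Theorem~\ref{thm:osc-estimates}, but replace the ingredient that was specific to the plane (the almost-linear $A$-decay of Proposition~\ref{pro:h-decay-pprime}) by the dimension-free $V$-decay of Lemma~\ref{lem:V-decay} with exponent~$\gamma$. This substitution is precisely what forces the restriction $\beta \in (0,\gamma)$ in the statement.

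First I would reduce to comparison with a $p$-harmonic function $h$ solving \eqref{eq:nolin-comparison} on $B$. The non-linear comparison Lemma~\ref{lem:nonlin-comparison-V} is valid in every dimension and gives
\[
\dashint_B \abs{V(\nabla u)-V(\nabla h)}^2\,dx \lesssim \dashint_B \phi^*_{\abs{A(\nabla u)}}(\abs{F-\mean{F}_B})\,dx \lesssim \dashint_B \abs{F-\mean{F}_B}^{p'}\,dx,
\]
where the last step uses Corollary~\ref{cor:hammer-pge2}. This is the universal bridge from $u$ to its $p$-harmonic tangent.

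Second, I would split according to whether the non-degeneracy condition \eqref{eq:non-deg-cond-u} holds on $B$ or not. In the non-degenerate regime the entire argument of Subsection~\ref{ssec:non-degenerate-case-u} is dimension-free: Lemma~\ref{lem:B-non-deg}, Lemma~\ref{lem:tauB-average}, the shift-change Lemma~\ref{lem:shift-change}, the $L^{p'}$-theory for linear systems with constant coefficients used in Lemma~\ref{lem:comparison-linear}, and the pointwise bound on $H$ from Lemma~\ref{lem:est-H} all work for arbitrary $d$. Thus Proposition~\ref{pro:main-A-decay-nondeg} produces (almost) linear $A$-decay with factor $c\,\theta$ plus an $F$-remainder, and this already beats any $\theta^\beta$ with $\beta<1$.

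In the degenerate case I would mimic Proposition~\ref{pro:decay-u-deg} but use Lemma~\ref{lem:V-decay} in place of Proposition~\ref{pro:h-decay-pprime}. Write
\[
\dashint_{\theta B}\abs{A(\nabla u)-\mean{A(\nabla u)}_{\theta B}}^{p'}\,dx \lesssim \dashint_{\theta B}\abs{A(\nabla h)-\mean{A(\nabla h)}_{\theta B}}^{p'}\,dx + \theta^{-d}\dashint_B\abs{A(\nabla u)-A(\nabla h)}^{p'}\,dx.
\]
For the first integral, Proposition~\ref{pro:A-rev-hoelder} (which holds in any dimension) reduces matters to the $V$-oscillation of $\nabla h$, to which Lemma~\ref{lem:V-decay} applies with exponent $\gamma$; passing back through Corollary~\ref{cor:hammer-pge2} yields decay with rate $\theta^{\gamma\cdot 2/p'}$ for the $A$-oscillation raised to $p'$, i.e.\ any rate $\theta^{\beta p'}$ with $\beta<\gamma$. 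The second integral is handled as in Proposition~\ref{pro:decay-u-deg}: use shift-change with a parameter $\lambda$, absorb $\dashint_B\abs{V(\nabla u)}^2\,dx$ via the degeneracy condition into the $A(\nabla u)$-oscillation on $B$, and bound the remainder by the $F$-oscillation using Lemma~\ref{lem:nonlin-comparison-V} and Corollary~\ref{cor:hammer-pge2}.

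The combination, for an appropriate choice of $\theta_0$ small depending on $\beta$, $\gamma$ and the $\epsilon_{\mathrm{DG}}$ fixed by the non-degenerate case, gives the claim. The main obstacle is the degenerate case: since in general dimension no better exponent than $\gamma$ is known for $p$-harmonic functions, the improvement to arbitrary $\beta<1$ available in the plane is genuinely unavailable here, and one must carefully track the interplay between the shift-change parameter $\lambda$ and the factor $\theta^{-d}$ from the comparison step so that, after absorption, the prefactor in front of the $A(\nabla u)$-oscillation on $2B$ collapses to $\theta_0^\beta$ while the $F$-prefactor remains finite (albeit $\beta$-dependent).
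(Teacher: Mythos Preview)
The paper does not give its own proof of this proposition; it is quoted verbatim from \cite[Proposition~2.1]{BreCiaDieKuuSch17} and used only as a benchmark against which Theorem~\ref{thm:osc-estimates} is compared. So there is no ``paper's proof'' to match; I can only assess your sketch on its own merits.

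Your overall architecture is the right one, and the non-degenerate branch is fine: Proposition~\ref{pro:main-A-decay-nondeg} and all of its ingredients (Lemmata~\ref{lem:B-non-deg}--\ref{lem:est-H}, together with Lemma~\ref{lem:decay-u-non-deg-V}) are indeed dimension-free, so that part transfers unchanged.

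The degenerate branch, however, has a genuine gap. You write that Proposition~\ref{pro:A-rev-hoelder} ``reduces matters to the $V$-oscillation of $\nabla h$'' and that ``passing back through Corollary~\ref{cor:hammer-pge2} yields decay with rate $\theta^{\gamma\cdot 2/p'}$.'' Neither step works in the direction you need. Proposition~\ref{pro:A-rev-hoelder} and Lemma~\ref{lem:rev-hoelder} bound the $A$-oscillation on $B$ from above by quantities on $2B$; they do not turn an $A$-oscillation on the \emph{small} ball $\theta B$ into a $V$-oscillation to which Lemma~\ref{lem:V-decay} could be applied. And Corollary~\ref{cor:hammer-pge2} gives $\abs{V(P)-V(Q)}^2 \lesssim \abs{A(P)-A(Q)}^{p'}$, which is exactly the wrong inequality for extracting an $A$-decay from a $V$-decay. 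Incidentally, the exponent $\gamma\cdot 2/p'$ you arrive at is precisely the typo flagged in the footnote of the statement; for $p\ge 2$ it exceeds $\gamma$, which should already signal trouble.

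What is actually needed in place of Proposition~\ref{pro:h-decay-pprime} is the dimension-free fact, recorded in Subsection~\ref{ssec:consequences} and proven in \cite[Remark~5.6]{DieKapSch11}, that a $V$-decay for $p$-harmonic functions with exponent~$\gamma$ implies an $A$-decay for every $\beta\in(0,\gamma)$. That implication is not a one-line consequence of Corollary~\ref{cor:hammer-pge2}; it requires an iteration of the type carried out in Proposition~\ref{pro:decay-A-nondeg} (shift-changes, Jensen, and a telescoping of means) to pass from the $\phi^*_{\abs{A(Q)}}$-scale back to the plain $L^{p'}$-scale. Once you invoke that result, the rest of your plan (insert it into the proof of Proposition~\ref{pro:decay-u-deg} and combine with Proposition~\ref{pro:main-A-decay-nondeg} as in Subsection~\ref{ssec:proof-theorem-osc-est}) goes through.
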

In contrast to this, our Theorem~\ref{thm:osc-estimates} improves the condition on~$\beta$ from $\beta \in (0,\gamma)$ to
$\beta \in (0,1)$. However, our result is restricted to the scalar problem in the plane.

Note that the best value of~$\gamma>0$ is not known for
systems or higher dimensions. 
Even for the scalar case in the plane it remains open whether the assertion extends to the limiting case $\beta=1$.
See, in particular, the discussion in Subsection~\ref{ssec:open-problem}. Therefore, it
was not possible for us to directly apply
Proposition~\ref{pro:5authors} but rather make use of the improved
$A$-decay of Theorem~\ref{thm:h-decay}.

Let us remark that all the results of Subsection~\ref{sec:oscill-estim}
are only based on the $A$-decay of Theorem~\ref{thm:h-decay}. In
particular, if the $p$-harmonic system~\eqref{eq:pharmonic} is proven
to satisfy an $A$-decay with power~$\beta$, then
Theorem~\ref{thm:osc-estimates} remains valid for all (!) smaller
exponents. Hence, all the consequences below will remain valid. 
Especially all proofs of Subsection~\ref{sec:oscill-estim} are valid
for any dimension as well as for systems.

At this point it is also worth to mention that for~$p\geq 2$ a $V$-decay with
exponent~$\gamma$ directly implies an $A$-decay for any
exponent $\beta \in (0,\gamma)$. This was proven
in~\cite[Remark~5.6]{DieKapSch11}.  Hence, our method is more flexible, since we only need~$A$-decay.

Let us now present some results which improve Theorem~1.3 and Corollary~5.2 of~\cite{BreCiaDieKuuSch17}. Particularly, we extend the range of admissible~$\beta$'s to $\beta \in (0,1)$.

\begin{corollary}[General oscillation estimate]
  \label{cor:general-osc-estimate}
  Let $\Omega \subset \setR^2$ and $p \geq 2$. Assume that 
  $u\colon \Omega \to \setR$ satisfies~\eqref{eq:plap}.
  If for $R>0$, some $\beta\in(0,1)$, and~$\omega\colon (0,R)\to (0,\infty)$ the function~$r \mapsto \omega(r)\,r^{-\beta}$ is almost decreasing\footnote{That is, we assume that $\omega(r)\, r^{-\beta} \le c\, \omega(\rho) \,\rho^{-\beta}$ for all $0 < r < \rho< R$.} in~$(0,R)$, then for any ball~$B_R$ with $2B_R \subset \Omega$ there holds
  \begin{align*}
    M_{\omega,R}^{\sharp,1}(A(\nabla u))(x)
    &\leq c_\beta\,     M_{\omega,R}^{\sharp,p'}(F)(x)
      + c_\beta\,   \frac{1}{\omega(R)} \bigg(
      \dashint_{2 B_R} \abs{A(\nabla u) - \mean{A(\nabla u)}_{
      2 B_R}}^{p'} \,dx \bigg)^{\frac 1{p'}}.
  \end{align*}
\end{corollary}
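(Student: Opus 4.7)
My plan is to bound the sharp maximal function pointwise by applying Theorem~\ref{thm:osc-estimates-general} at scale $t=R$ for each $r \in (0,R)$. Fix the target exponent $\beta \in (0,1)$ from the hypothesis and choose an auxiliary exponent $\beta' \in (\beta,1)$, for instance $\beta' := (1+\beta)/2$. The strict gap $\beta' > \beta$ will be essential for closing the $F$-integral below. For every $r \in (0,R)$, since $B_R(x) \subset 2B_R \subset \Omega$, Theorem~\ref{thm:osc-estimates-general} with exponent $\beta'$, $t=R$, and $\theta = r/R$ yields
\begin{align*}
  \osc_{p'} A(\nabla u)(x,r)
  &\leq c\, (r/R)^{\beta'}\, \osc_{p'} A(\nabla u)(x,R) \\
  &\qquad + c\, (r/R)^{\beta'} \int_{r/R}^{1} \lambda^{-\beta'}\, \osc_{p'} F(x,\lambda R)\, \frac{d\lambda}{\lambda}.
\end{align*}
Jensen's inequality gives $\osc_1 A(\nabla u)(x,r) \leq \osc_{p'} A(\nabla u)(x,r)$, so after dividing by $\omega(r)$ only two contributions remain to be controlled.

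For the initial-data term, the almost-decreasing property applied to the pair $r < R$ gives $\omega(r) r^{-\beta} \geq c^{-1} \omega(R) R^{-\beta}$, i.e.\ $r^{\beta}/\omega(r) \leq c\, R^{\beta}/\omega(R)$. Consequently
\begin{align*}
  \frac{(r/R)^{\beta'}}{\omega(r)}
  = \Big(\tfrac{r}{R}\Big)^{\beta' - \beta}\cdot \frac{r^{\beta}}{R^{\beta}\, \omega(r)}
  \leq \frac{c\, (r/R)^{\beta' - \beta}}{\omega(R)}
  \leq \frac{c}{\omega(R)},
\end{align*}
where the last step uses $\beta' > \beta$ and $r < R$. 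Lemma~\ref{lem:osc-aux} then lets me replace the local mean $\mean{A(\nabla u)}_{B_R(x)}$ by the mean over $2B_R$ and enlarge the domain of integration from $B_R(x)$ to $2B_R$, which reproduces the second term on the right-hand side of the corollary.

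For the $F$-integral I use $\osc_{p'} F(x,\lambda R) \leq \omega(\lambda R)\, M_{\omega,R}^{\sharp,p'}(F)(x)$ (valid since $\lambda R < R$) and substitute $s = \lambda R$ to reduce the task to estimating $r^{\beta'} \omega(r)^{-1} \int_r^R s^{-\beta'}\, \omega(s)\, \frac{ds}{s}$. Rewriting the almost-decreasing hypothesis as $\omega(s) \leq c\,\omega(r)(s/r)^{\beta}$ for $r < s$ turns this into $c\,\omega(r)\, r^{-\beta} \int_r^R s^{\beta - \beta' - 1}\, ds$, and here is the one technical point of the whole argument: because $\beta' - \beta > 0$, the elementary integral is bounded by $r^{\beta - \beta'}/(\beta' - \beta)$, so the resulting $r^{-\beta'}$ exactly cancels the prefactor and produces a constant multiple of $M_{\omega,R}^{\sharp,p'}(F)(x)$. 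Had we instead used the borderline choice $\beta' = \beta$, the same integral would generate a logarithmic $\log(R/r)$ that cannot be absorbed into a sharp-maximal bound; circumventing this is precisely why Theorem~\ref{thm:osc-estimates-general} is available for the full range of exponents $(0,1)$ rather than at a single value. Combining the two bounds and taking the supremum over $r \in (0,R)$ yields the claimed inequality.
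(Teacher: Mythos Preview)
Your argument is essentially correct and matches the route the paper indicates: the paper does not spell out a proof here but simply notes that the argument of \cite[Theorem~1.3]{BreCiaDieKuuSch17} goes through verbatim once Proposition~\ref{pro:5authors} is replaced by the sharper Theorem~\ref{thm:osc-estimates}. You have made that argument explicit via the iterated form Theorem~\ref{thm:osc-estimates-general}, and your use of an auxiliary exponent $\beta'>\beta$ to kill the would-be logarithm is exactly the right idea.

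One small point to tighten: the maximal operator $M_{\omega,R}^{\sharp,1}$ ranges over \emph{all} balls $B_r\ni x$ of radius $r<R$, not only those centred at $x$, whereas you only bound the centred oscillations $\osc_{p'}A(\nabla u)(x,r)$. The fix is immediate: given such a ball with centre $y$ (so $|y-x|<r$), apply Theorem~\ref{thm:osc-estimates-general} at $y$ rather than at $x$. Then $B_R(y)\subset 2B_R\subset\Omega$, so the initial-data term is controlled by the $2B_R$-oscillation exactly as in your Lemma~\ref{lem:osc-aux} step; and for each $\lambda\in(r/R,1)$ one has $x\in B_{\lambda R}(y)$, so the ball $B_{\lambda R}(y)$ is admissible in the definition of $M_{\omega,R}^{\sharp,p'}(F)(x)$ and your $F$-integral estimate goes through unchanged.
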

Here for any $q\geq 1$ the localized fractional sharp maximal operator~$ M_{\omega,R}^{\sharp,q}$ of $f\in L_{\loc}^q(\setR^2)$ is defined pointwise by
  \begin{align*}
    M_{\omega,R}^{\sharp,q}(f)(x) &:= \sup_{
                                    \substack{B_r \ni x
    \\
    r <
    R}} \frac 1{\omega(r)} \bigg(\dashint_{B_r}
    \abs{f-\mean{f}_{B_r}}^q\, dy\bigg)^{\frac 1q}. 
  \end{align*}

\begin{corollary}
  \label{cor:general-osc-estimate-Cs}
  For $\Omega \subset \setR^2$ and $p \geq 2$ let 
  $u\colon \Omega \to \setR$ satisfy~\eqref{eq:plap}. If~$s \in (0,1)$ and
  $2B \subset \Omega$, then
  \begin{align*}
    \abs{A(\nabla u)}_{C^s(B)}\le
    c_s \Big( \abs{F}_{C^s(2 B)}+\dashint_{2B} \abs{A(\nabla u)
    - \mean{A(\nabla u)}_{2B}}^{p'}\,dx \bigg)^{\frac{1}{p'}}\Big).
  \end{align*}
\end{corollary}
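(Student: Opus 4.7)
The plan is to deduce this H\"older-seminorm bound from Corollary~\ref{cor:general-osc-estimate} by specializing the weight function to $\omega(r):=r^s$ and then invoking the classical Campanato characterization of H\"older spaces. First I would verify the hypothesis of Corollary~\ref{cor:general-osc-estimate}: since $\omega(r)\,r^{-\beta}=r^{s-\beta}$, any choice $\beta\in(s,1)$ makes this map strictly decreasing on $(0,R)$, hence almost decreasing. Thus the Corollary applies with $R$ equal to the radius of~$B$, yielding a pointwise estimate for every $x\in B$.

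Next I would pass to the supremum over $x\in B$. Recall the Campanato characterization: for every $q\ge 1$ and $s\in(0,1)$ the H\"older seminorm $\abs{g}_{C^s(B')}$ is comparable to
\begin{align*}
  \sup_{\substack{x\in B'\\0<r<R}} r^{-s}\bigg(\dashint_{B_r(x)} \abs{g-\mean{g}_{B_r(x)}}^q\, dy\bigg)^{\frac{1}{q}},
\end{align*}
provided the balls $B_r(x)$ stay inside a fixed enlargement of $B'$. Applying this with $q=1$ on the left-hand side of the bound from Corollary~\ref{cor:general-osc-estimate} produces precisely $\abs{A(\nabla u)}_{C^s(B)}$, while applying it with $q=p'$ on the right-hand side gives $\sup_{x\in B}M^{\sharp,p'}_{\omega,R}(F)(x)\eqsim\abs{F}_{C^s(2B)}$.

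The remaining low-order contribution $\omega(R)^{-1}\big(\dashint_{2B_R}\abs{A(\nabla u)-\mean{A(\nabla u)}_{2B_R}}^{p'}\,dx\big)^{1/p'}$ is already of the required shape, with the factor $\omega(R)^{-1}=R^{-s}$ absorbed into the constant $c_s$ (which is then allowed to depend on the fixed radius $R$ of~$B$).

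The only genuinely non-routine ingredient is the Campanato equivalence in the direction from mean-oscillation control to H\"older continuity, which rests on a classical dyadic telescoping argument valid for every integrability exponent $q\ge 1$. Since this is a well-known black-box result, the whole proof essentially reduces to correctly specializing Corollary~\ref{cor:general-osc-estimate}.
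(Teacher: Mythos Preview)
Your approach is correct and matches what the paper does: it gives no self-contained argument here but simply remarks that Corollaries~\ref{cor:general-osc-estimate} and~\ref{cor:general-osc-estimate-Cs} follow from the oscillation estimate of Theorem~\ref{thm:osc-estimates} exactly as in~\cite{BreCiaDieKuuSch17}, and additionally observes that the case $C^s=\bfB^s_{\infty,\infty}$ is subsumed by Theorem~\ref{thm:reg-transfer}. Specializing $\omega(r)=r^s$ with $\beta\in(s,1)$ in Corollary~\ref{cor:general-osc-estimate} and invoking Campanato is precisely the intended route.

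One point needs care, however: you cannot legitimately absorb $\omega(R)^{-1}=R^{-s}$ into a constant $c_s$ that is supposed to be independent of the ball. The dimensionally correct resolution is either to retain an explicit factor $R^{-s}$ in front of the lower-order oscillation term (which is what one finds for the standard H\"older seminorm), or to read $\abs{\cdot}_{C^s(B)}$ as the scale-invariant seminorm introduced in Section~\ref{sec:regularity-transfer}, in which case the built-in factor $R^s$ cancels the $R^{-s}$ and the constant genuinely depends only on $s$ and $p$.
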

The proofs of these results are only based on Proposition~\ref{pro:5authors},
resp. \cite[Proposition~2.1]{BreCiaDieKuuSch17}. 
Our improved version
in Theorem~\ref{thm:osc-estimates} immediately implies the claims.

Note that Corollary~\ref{cor:general-osc-estimate-Cs} is also
covered by Theorem~\ref{thm:reg-transfer} below since $C^s = \bfB^s_{\infty,\infty}$.

\section{Regularity transfer - nonlinear \Calderon{}-Zygmund estimates}
\label{sec:regularity-transfer}
In this section we show that Sobolev regularity up to order one
transfers from the right-hand side $F$ to the flux $A(\nabla u)$.  
We present this result in more general scales of Besov and Triebel-Lizorkin spaces in Subsection~\ref{ssec:regul-transf-from}. 
At first, in Theorem~\ref{thm:reg-transfer} it is shown that in terms of quasi-semi norms we have
\begin{align*}
\abs{A(\nabla u)}_{\bfX(B)} \le C\,\abs{F}_{\bfX(2B)} + \texttt{lower order terms of~$A(\nabla u)$},
\end{align*}
where ~$\bfX$ stands for either~$\bfB^s_{\rho,q}$ or~$\bfF^s_{\rho,q}$. Afterwards, in Subsection~\ref{ssec:transfer-nabla-u}, we
study how this new regularity
for~$A(\nabla u)$ translates into regularity statements for~$\nabla u$ and~$V(\nabla u)$.

\subsection{Regularity transfer from~\texorpdfstring{$F$}{F} to~\texorpdfstring{$A(\nabla u)$}{A(nabla u)}}
\label{ssec:regul-transf-from}

For a ball~$B$ let us denote by $\bfB^s_{\rho,q}(B)$ and $\bfF^s_{\rho,q}(B)$, the Besov space, resp.\ Triebel-Lizorkin space, of functions on~$B$ with
differentiability~$s>0$, integrability~$0<\rho \leq \infty$, and fine index~$0<q\leq \infty$ (with $\rho<\infty$ for the $\bfF$-scale).  We use
$\norm{\cdot}_{\bfB^s_{\rho,q}(B)}$ to denote the (quasi-) norm and
$\abs{\cdot}_{\bfB^s_{\rho,q}(B)}$ for the (quasi-) semi norm describing the part of the $s$-order derivatives. Likewise we do for the $\bfF$-scale. 
The exact definitions are introduced below. As usual, we let $(x)_+:=\max\{0,x\}$ for $x\in\setR$. 
Then our main result is the following local regularity transfer:
\begin{theorem}
  \label{thm:reg-transfer}
  Given $2\leq p < \infty$, some domain~$\Omega \subset \setR^2$, and $F \in L^{p'}(\Omega)$
  let $u \in W^{1,p}(\Omega)$ be a (scalar) weak solution to
  \begin{align*}
  -\divergence(A(\nabla u)) &= -\divergence F \qquad \text{in $\Omega$}.
  \end{align*}
  Further, let $s >0$ and
  $\rho,q \in (0,\infty]$ be such that
  \begin{align}
    \label{eq:ref-transfer-cond}
    d \bigg( \frac{1}{\rho} -\frac{1}{p'} \bigg)_+ < s < 1,
  \end{align}
  i.e., $\bfB^s_{\rho,q}(B) \compactembedding L^{p'}(B)$. Then for any
  ball~$B$ with $2B \subset \Omega$ there holds
  \begin{align}
    \label{eq:ref-transfer-est}
    \abs{A(\nabla u)}_{\bfB^s_{\rho,q}(B)} \lesssim
    \abs{F}_{\bfB^s_{\rho,q}(2B)} + \bigg(
    \dashint_{2B} \abs{A(\nabla u)
    - \mean{A(\nabla u)}_{2B}}^{p'}\,dx \bigg)^{\frac{1}{p'}}. 
  \end{align}
  If additionally~$\rho < \infty$ and
  \begin{align}
    \label{eq:ref-transfer-cond2}
    d \bigg( \frac{1}{q} -\frac{1}{p'} \bigg)_+ < s < 1,
  \end{align}
  then the same estimate~\eqref{eq:ref-transfer-est} holds
  true when $\bfB^s_{\rho,q}$ is replaced by~$\bfF^s_{\rho,q}$.
\end{theorem}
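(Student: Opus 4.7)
The plan is to combine the oscillation characterization of $\bfB^s_{\rho,q}$ and $\bfF^s_{\rho,q}$ from Lemma~\ref{lem:characterization} with the decay estimate of Theorem~\ref{thm:osc-estimates-general}. The embedding assumption~\eqref{eq:ref-transfer-cond} (respectively~\eqref{eq:ref-transfer-cond2}) is exactly the threshold that permits these quasi-semi-norms to be characterized by oscillations measured in $L^{p'}$, which is the integrability appearing in Theorem~\ref{thm:osc-estimates-general}. Once this matching is in place, the regularity transfer reduces to a one-dimensional Hardy-type manipulation.

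I would first fix an auxiliary exponent $\beta$ with $s < \beta < 1$ and a scale $R$ so that $B_R(x) \subset 2B$ for all $x \in B$. Theorem~\ref{thm:osc-estimates-general} then yields, pointwise in~$x$ and for every $0 < r \le R$,
$$\osc_{p'} A(\nabla u)(x, r) \leq c\,(r/R)^\beta \osc_{p'} A(\nabla u)(x, R) + c\,r^\beta \int_r^R \mu^{-\beta} \osc_{p'} F(x, \mu)\,\frac{d\mu}{\mu}.$$
I would multiply both sides by $r^{-s}$ and take the $L^q_*(0,R;\tfrac{dr}{r})$-quasi-norm. The first summand contributes, since $\beta > s$, a term bounded by $\osc_{p'} A(\nabla u)(x, R)$, while the second is handled by the one-dimensional Hardy-type inequality
$$\bigg(\int_0^R r^{(\beta-s)q} \Big(\int_r^R \mu^{-\beta}\, g(\mu)\, \tfrac{d\mu}{\mu}\Big)^q \tfrac{dr}{r}\bigg)^{1/q} \lesssim \bigg(\int_0^R \mu^{-sq}\, g(\mu)^q\, \tfrac{d\mu}{\mu}\bigg)^{1/q},$$
applied with $g(\mu) := \osc_{p'} F(x, \mu)$ and valid for all $q \in (0,\infty]$. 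Finally, I would take the $L^\rho(B)$-norm in $x$ for the Besov case, or invoke the mixed-norm definition $L^\rho(\ell^q_*)$ for the Triebel-Lizorkin case, which converts the pointwise integral inequality into~\eqref{eq:ref-transfer-est}. The leftover term $\norm{\osc_{p'} A(\nabla u)(\cdot, R)}_{L^\rho(B)}$ at the fixed scale~$R$ is easily absorbed into $\bigl(\dashint_{2B} \abs{A(\nabla u) - \mean{A(\nabla u)}_{2B}}^{p'}\,dx\bigr)^{1/p'}$ via Jensen's inequality and the trivial estimate $\osc_{p'} A(\nabla u)(x,R) \le c\,\bigl(\dashint_{2B}\abs{A(\nabla u) - \mean{A(\nabla u)}_{2B}}^{p'}\bigr)^{1/p'}$.

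The main technical obstacle is executing the above three steps within the quasi-Banach regime $\min\{\rho,q\} < 1$, where Minkowski's inequality is no longer available. Specifically, the interchange of the $L^\rho_x$-norm with the integration in~$r$ in the Besov case must be justified carefully; this is feasible because the pointwise inequality is subadditive and the weight $r^{(\beta-s)q-1}$ is locally integrable, so one can argue scale by scale and use the $\rho$-subadditivity. A related delicate point is to confirm that the oscillation characterization in Lemma~\ref{lem:characterization} really does hold with $L^{p'}$-oscillations (and not merely with $L^\rho$- or $L^q$-oscillations) throughout the full range of parameters we consider; this is precisely the role of the embedding conditions~\eqref{eq:ref-transfer-cond} and~\eqref{eq:ref-transfer-cond2}, and it is what makes the exponent $p'$ from Theorem~\ref{thm:osc-estimates-general} compatible with the left-hand side of~\eqref{eq:ref-transfer-est}.
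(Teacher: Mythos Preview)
Your approach is correct and shares the same ingredients as the paper---the oscillation characterization (Lemma~\ref{lem:characterization} with $w=p'$) together with the decay estimate---but the mechanism for converting the decay into the seminorm bound is genuinely different. The paper does \emph{not} invoke the integrated estimate of Theorem~\ref{thm:osc-estimates-general} and a Hardy inequality. Instead it uses the single-step decay of Theorem~\ref{thm:osc-estimates}: writing the seminorm as $\int_0^R(\cdots)\,\tfrac{dt}{t}$, it splits at $t=\theta_0 R$, rescales the inner piece $t\mapsto \theta_0 t$, applies the one-step decay, and then \emph{absorbs} the resulting $\int_0^{\theta_0 R}$ back into the left-hand side because the factor picked up is $\theta_0^{\beta-s}<1$. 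What survives is the $F$-seminorm plus a tail integral over $[\theta_0 R,R]$, which is controlled exactly as you describe by~\eqref{eq:decay-osc-new-10}. Your route via Theorem~\ref{thm:osc-estimates-general} and Hardy is more direct in the Banach range, and arguably more transparent for the Triebel--Lizorkin case since the pointwise-in-$x$ bound on the $L^q(\tfrac{dt}{t})$ expression matches the $\bfF$-norm immediately.

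Where the two routes really diverge is the quasi-Banach case $\min\{\rho,q\}<1$. Your acknowledged obstacle---pulling $\normm{\cdot}_{L^\rho}$ through the integral $\int_r^R\mu^{-\beta}(\cdots)\,\tfrac{d\mu}{\mu}$ when $\rho<1$---is real, and your sketch (discretize, use $\rho$-subadditivity) can be made to work but is fiddly. The paper's absorption argument sidesteps this entirely: the one-step decay has only two summands, so the quasi-triangle inequality for $\normm{\cdot}_{L^\rho}$ and for $L^q(\tfrac{dt}{t})$ introduces merely a fixed constant $c_{\rho,q}$, and the absorption still succeeds provided $c_{\rho,q}\,\theta_1^{\beta-s}<1$. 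This is arranged by replacing $\theta_0$ with a sufficiently small $\theta_1$, which Theorem~\ref{thm:osc-estimates-general} (used only in this auxiliary capacity) guarantees is permissible. So the paper's approach buys a cleaner treatment of the quasi-Banach regime, while yours buys a more streamlined argument in the Banach regime.

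One minor clarification: for the Besov scale the quasi-norm is $L^q_t(L^\rho_x)$, so you must take $\normm{\cdot}_{L^\rho(B)}$ of the pointwise decay \emph{first} and only then the $L^q(\tfrac{dr}{r})$-norm; your narrative (``take $L^q$ in $r$, then $L^\rho$ in $x$'') literally produces the $\bfF$-seminorm. You clearly recognize this in your final paragraph, but the earlier description should be reordered accordingly.
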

\begin{remark}
  \label{rem:reg-transfer-Rd}
  Our result also generalizes to higher dimensions and
  vectorial solutions. However, in this setting the
  differentiability is restricted to~$s \in (0,\beta_0)$,
  where~$\beta_0\leq 1$ is some unknown small number. The reason behind this
  is the worse decay estimate for $p$-harmonic functions in this
  general situation.  In fact, our regularity transfer holds exactly for
  the same range as the decay estimates. See the also the 
  Remarks~\ref{rem:h-decay-Rd} and~\ref{rem:osc-est-Rd} for more
  details.
\end{remark}
Let us introduce the norms used to describe our spaces
$\bfB^s_{\rho,q}(B)$ and $\bfF^s_{\rho,q}(B)$, respectively. In order to have the
constants in~\eqref{eq:ref-transfer-est} independent of the chosen
ball~$B=B_R$, we are using norms that are invariant with respect to the
scaling $g\mapsto g_R := g(\cdot/R)$. For a ball~$B$ we introduce the scaling invariant~$L^\rho(B)$ (quasi-) norm by
\begin{align*}
  \normm{f}_{L^\rho(B)} := 
  \begin{cases}
  	\bigg( \displaystyle\dashint_B \abs{f}^\rho\,dx
  	\bigg)^{\frac 1\rho} &\text{ for} \quad	0< \rho < \infty,\\
  	\norm{g}_{L^\infty(B)} & \text { if} \quad \rho=\infty.
  \end{cases}
\end{align*}
Moreover, we need a localized version of the oscillations from
Section~\ref{sec:oscill-estim}.  For every $g \in L^w(B)$ with
$1 \leq \omega < \infty$ we define its localized oscillation by
\begin{align*}
  \osc^B_wg(x,t) &:= \bigg( \dashint_{B_t(x) \cap B} \abs{g -
                  \mean{g}_{B_t(x) \cap B}}^w \,dy
                \bigg)^{\frac 1w}.
\end{align*}
Using these ingredients there holds the following characterization of~$\bfB^s_{\rho,q}(B)$ and
$\bfF^s_{\rho,q}(B)$, resp., which for simplicity we could take here also  as
a definition.

\begin{lemma}[Characterization by oscillations]
  \label{lem:characterization}
  Let $B \subset \Rd$ be a ball with radius~$R$.  Further let
  $0<\rho,q\leq \infty$, and $s>0$, as well as
  $1\leq w \leq \infty$, and assume that
  \begin{align*}
    d \left( \frac{1}{\rho} - \frac{1}{w}\right)_+ < s < 1,
  \end{align*}
  i.e., $\bfB^s_{\rho,q}(B) \compactembedding L^w(B)$, resp.
  $\bfF^s_{\rho,q}(B) \compactembedding L^w(B)$.
  \begin{enumerate}
  \item
    \label{itm:characterization-B}
    Then there holds
    \begin{align*}
      \bfB_{\rho,q}^s(B) = \bigset{ g \in L^{\max\{\rho,w\}}(B)
      \,|\, \norm{g}_{\bfB_{\rho,q}^s(B)} <\infty },
    \end{align*}
    with $\norm{g}_{\bfB_{\rho,q}^s(B)}
      := \normm{g}_{L^\rho(B)} + \abs{g}_{\bfB_{\rho,q}^s(B)}$,
      where 
    \begin{align*}
      \abs{g}_{\bfB_{\rho,q}^s(B)} 
      &:= R^s\Bigg( \int_0^R \bigg(
                                \frac{\normm{\osc_w^B g(\cdot,t)
                                }_{L^\rho(B)}}{t^s}
                                \bigg)^q \frac{dt}{t} \Bigg)^{\frac 1q}
    \end{align*}
    with the usual modification for $q=\infty$.
  \item
    \label{itm:characterization-F}
    Additionally, assume~$\rho < \infty$ and
    \begin{align*}
      d \left( \frac{1}{q} - \frac{1}{w}\right)_+<s,
    \end{align*}
    Then there holds
    \begin{align*}
      \bfF_{\rho,q}^s(B) = \bigset{ g \in L^{\max\{\rho,w\}}(B)
      \,|\, \norm{g}_{\bfF_{\rho,q}^s(B)} <\infty },
    \end{align*}
    with
    $\norm{g}_{\bfF_{\rho,q}^s(B)} := \normm{g}_{L^\rho(B)} +
    \abs{g}_{\bfF_{\rho,q}^s(B)} $, where
    \begin{align*}
      \abs{g}_{\bfF_{\rho,q}^s(B)} 
      &:= R^s \Biggnormm{ \Bigg( \int_0^R \bigg(
        \frac{\osc_w^B g(\cdot,t)
        }{t^s}
        \bigg)^q \frac{dt}{t} \Bigg)^{\frac
        1q}         }_{L^\rho(B)}
    \end{align*}
    (modification for $q=\infty$). 
  \end{enumerate}
\end{lemma}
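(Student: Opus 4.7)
The characterization is essentially classical, and my plan is to reduce it to known results from the literature on Besov and Triebel-Lizorkin spaces. First I would exploit the scaling invariance of both sides of the claimed equivalence: under $g \mapsto g(\cdot/R)$ the quasi-norm $\normm{\cdot}_{L^\rho(B)}$ is preserved and the factor $R^s$ in the seminorm cancels the scaling of the inner oscillation integral, so it suffices to handle the case $R = 1$.

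For the global $\Rd$-version, the key equivalence reads
\[
  \abs{g}_{\bfB^s_{\rho,q}(\Rd)} \eqsim \left( \int_0^\infty t^{-sq}\, \normm{\osc_w g(\cdot,t)}_{L^\rho(\Rd)}^q \frac{dt}{t} \right)^{\frac{1}{q}},
\]
valid for $0<s<1$ and $d(1/\rho-1/w)_+<s$. It follows from the fact that $\osc_w g(x,t)$ is comparable to the best $L^w(B_t(x))$-approximation by a constant (cf.\ Lemma~\ref{lem:osc-aux}), which in turn is controlled by a local first-order modulus of continuity; the standard Besov characterization via moduli of smoothness then yields the equivalence. The Triebel-Lizorkin analogue, in which the $L^q(dt/t)$ norm is taken pointwise in $x$ before the $L^\rho$-integration, is due to Dorronsoro and requires the additional restriction $s > d(1/q-1/w)_+$. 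To transfer these global equivalences to the ball, I would employ a bounded extension operator (for instance Rychkov's universal extension) to identify $\bfB^s_{\rho,q}(B)$ and $\bfF^s_{\rho,q}(B)$ with the restriction spaces of their $\Rd$-counterparts. For $x\in B$ and $t\le R$ the local oscillation $\osc_w^B g(x,t)$ is comparable to $\osc_w \tilde g(x,t)$ whenever $\tilde g$ extends $g$, and taking the infimum over admissible extensions closes the equivalence of norms.

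The principal difficulty lies in the quasi-Banach regime $\min\{\rho,q\}<1$, where the Hardy-Littlewood maximal inequality fails for $L^\rho$- or $L^q$-vector-valued expressions and one must work with Peetre-type maximal functions instead. This is precisely where the embedding conditions $d(1/\rho-1/w)_+<s$ and, in the Triebel-Lizorkin case, $d(1/q-1/w)_+<s$ become essential: they provide exactly the decay needed to apply the Peetre-Fefferman-Stein vector-valued inequality. A second, more technical point concerns the boundary effect when $B_t(x) \not\subset B$, where $\osc_w^B g(x,t)$ and $\osc_w \tilde g(x,t)$ differ; thanks to $t\le R$ and the assumed (compact) embedding into $L^w(B)$ the discrepancy is absorbed by the $L^\rho(B)$-part of the norm, which is why only the $L^{\max\{\rho,w\}}(B)$-integrability of $g$ needs to be postulated a priori.
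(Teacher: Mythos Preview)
Your plan is correct in outline and leads to the same conclusion, but it takes a longer route than the paper. The paper simply invokes Triebel's characterization of $\bfB^s_{\rho,q}$ and $\bfF^s_{\rho,q}$ by local oscillations \emph{directly on bounded $C^\infty$ domains} (a ball being such a domain), and then applies exactly the scaling $g\mapsto g(\cdot/R)$ you describe to obtain the scale-invariant form; the vector-valued case is dismissed as straightforward. In other words, the domain version is already available off the shelf, so there is no need to pass through the $\Rd$-equivalence plus an extension operator.

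Your detour through Dorronsoro's global characterization together with Rychkov's universal extension is a legitimate alternative and has the merit of making the mechanism behind the parameter restrictions (the Peetre--Fefferman--Stein machinery in the quasi-Banach range) visible. The cost is the extra work of matching the localized oscillation $\osc_w^B$ with the full oscillation $\osc_w$ of an extension near the boundary of $B$; your remark that this discrepancy is absorbed by the lower-order $L^\rho$ term is correct in spirit but would need a genuine argument if you were not ultimately going to cite a known result anyway. Since Triebel's theorem already packages all of this, the paper's approach is the more economical one.
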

\begin{proof}
  For scalar functions this characterization is a special case of
  Triebel~\cite[Theorem~2.2.2 and (2.22)]{Tri1989} for
  bounded~$C^\infty$ domains.  Our scaling invariant version can be
  obtained by applying the result of Triebel to the unit ball~$B_1(0)$
  and then scale by $g_R(x) := g(x/R)$ and translate. The extension
  for vector-valued functions is straightforward.
\end{proof}

Note that the stated (quasi-) semi norm implicitly
depends on the parameter~$w$. This however only gives rise to equivalent (quasi-) norms in the same space.
Moreover, let us recall that the Besov and Triebel-Lizorkin scales of smoothness $s>0$ include (among others) the more familiar H\"older-Zygmund spaces $C^s=\bfB^s_{\infty,\infty}$, Sobolev-Slobodeckij spaces $W^{s,\rho} = \bfB^s_{\rho,\rho}=\bfF^s_{\rho,\rho}$ (with $1\leq \rho<\infty$ and $s\notin\setN$), and Bessel-potential spaces $H^{s,\rho}=\bfF^s_{\rho,2}$ (with $1<\rho<\infty$). For details we refer to \cite{RunSic96}.

Let us now prove our main result of this
Section~\ref{sec:regularity-transfer}.

\begin{proof}[Proof of Theorem~\ref{thm:reg-transfer}]
  For simplicity of presentation we first prove the result in the
  situation of Banach spaces. That is, for now we assume~$\rho,q \geq 1$. 
  The modifications needed for the quasi-Banach case, where the
  used quasi-triangle inequalities produce additional constants, are explained afterwards.
  
  Let us choose~$\beta \in (s,1)$. Then according to
  Theorem~\ref{thm:osc-estimates} we find~$\theta_0<1$ and some
  constant~$c_\beta$ such that the decay estimate
  \begin{align}
    \label{eq:decay-osc-new}
    \osc_{p'} A(\nabla u)(x,\theta_0 t) \leq \theta_0^\beta
    \osc_{p'} A(\nabla u)(x,t) + c_\beta \osc_{p'} F(x, t)
  \end{align}
  holds for all $x \in B$ and $t >0$ such that $B_t(x) \subset \Omega$.

  Now, assume that $q<\infty$. We start with the representation from
  Lemma~\ref{lem:characterization} (using~$w=p'$)
  \begin{align*}
    \abs{A(\nabla u)}_{\bfB^s_{\rho,q}(B)}
    &\lesssim R^s\Bigg( \int_0^{R} \bigg(
      \frac{\normm{\osc_{p'}^B A(\nabla u)(\cdot,t)
      }_{L^\rho(B)}}{t^s}
      \bigg)^q \frac{dt}{t} \Bigg)^{\frac
      1q}.
  \end{align*}
  For every $x \in B$ and $t \in (0,R)$, we have
  $\abs{B_t(x) \cap B} \eqsim \abs{B_t(x)}$ and therefore we may write 
  $\osc_{p'}^B A(\nabla u)(\cdot,t) \lesssim \osc_{p'} A(\nabla
  u)(\cdot,t)$. This implies
  \begin{align}
      \abs{A(\nabla u)}_{\bfB^s_{\rho,q}(B)} &\lesssim R^s\Bigg(
      \int_0^{R} \bigg( \frac{\normm{\osc_{p'} A(\nabla
          u)(\cdot,t) }_{L^\rho(B)}}{t^s} \bigg)^q \frac{dt}{t}
      \Bigg)^{\frac 1q}
      \nonumber\\
       &\lesssim R^s\Bigg(
      \int_0^{\theta_0 R} \bigg( \frac{\normm{\osc_{p'} A(\nabla
          u)(\cdot,t) }_{L^\rho(B)}}{t^s} \bigg)^q \frac{dt}{t}
      \Bigg)^{\frac 1q}
      \nonumber\\
       &\qquad + R^s\Bigg(
      \int_{\theta_0 R}^R \bigg( \frac{\normm{\osc_{p'} A(\nabla
          u)(\cdot,t) }_{L^\rho(B)}}{t^s} \bigg)^q \frac{dt}{t}
      \Bigg)^{\frac 1q}
      \nonumber\\
      & =: \mathrm{I} + \mathrm{II}.
  \label{eq:decay-osc-new-2}
  \end{align}
  Now, rescaling of the integral and our decay
  estimate~\eqref{eq:decay-osc-new} imply
  \begin{align*}
    \mathrm{I} 
    &= R^s\Bigg( \int_0^{R} \bigg( \frac{\normm{\osc_{p'} A(\nabla
      u)(\cdot,\theta_0 t) }_{L^\rho(B)}}{(\theta_0 t)^s} \bigg)^q
      \frac{dt}{t} \Bigg)^{\frac 1q} 
    \\
    &\leq \theta_0^{\beta-s} R^s\Bigg( \int_0^{R} \bigg(
      \frac{\normm{\osc_{p'} A(\nabla u)(\cdot,t)
      }_{L^\rho(B)}}{t^s}
      \bigg)^q \frac{dt}{t} \Bigg)^{\frac 1q}
    \\
    &\quad 
+ c_\beta\,\theta_0^{-s}
      R^s\Bigg( \int_0^{R} \bigg( 
      \frac{\normm{\osc_{p'} F(\cdot,t)
      }_{L^\rho(B)}}{t^s}
      \bigg)^q \frac{dt}{t} \Bigg)^{\frac 1q}.
  \end{align*}
  Since $\theta_0^{\beta-s}<1$ we can absorb the
  $\int_0^{\theta_0 R} \cdot\, dt$-part of the first integral
  into~$\mathrm{I}$ to obtain
  \begin{align*}
    \begin{aligned}
      \mathrm{I} &\lesssim R^s \Bigg( \int_{\theta_0R}^{R} \bigg(
      \frac{\normm{\osc_{p'} A(\nabla u)(\cdot,t) }_{L^\rho(B)}}{t^s}
      \bigg)^q \frac{dt}{t} \Bigg)^{\frac 1q}
+ R^s\Bigg( \int_0^{R} \bigg( \frac{\normm{\osc_{p'}
          F(\cdot,t) }_{L^\rho(B)}}{t^s} \bigg)^q \frac{dt}{t}
      \Bigg)^{\frac 1q}
      \\
      &= \mathrm{II} + R^s\Bigg( \int_0^{R} \bigg( \frac{\normm{\osc_{p'}
          F(\cdot,t) }_{L^\rho(B)}}{t^s} \bigg)^q \frac{dt}{t}
      \Bigg)^{\frac 1q}.
    \end{aligned}
  \end{align*}
  For $x \in B$ and $t \in (0,R)$ we have~$\osc_{p'}F(x,t)= \osc_{p'}^{2B}
  F(x,t)$. Thus,
  \begin{align*}
    \mathrm{I} &\lesssim \mathrm{II} + R^s \Bigg( \int_0^{R} \bigg( \frac{\normm{\osc^{2B}_{p'}
                 F(\cdot,t) }_{L^\rho(2B)}}{t^s} \bigg)^q \frac{dt}{t}
                 \Bigg)^{\frac 1q}
               = \mathrm{II} + \abs{F}_{\bfB^s_{\rho,q}(2B)}.
  \end{align*}
  Combining this with~\eqref{eq:decay-osc-new-2} we obtain
  \begin{align}
    \label{eq:decay-osc-new-3}
    \abs{A(\nabla u)}_{\bfB^s_{\rho,q}(B)} 
    &\lesssim \abs{F}_{\bfB^s_{\rho,q}(2B)} + \mathrm{II}.
  \end{align}
  Moreover, for $x\in B$ and $t \in (\theta_0R, R)$ we have
  $B_t(x) \subset 2B\subset\Omega$ and $\abs{B_t(x)} \eqsim \abs{2B}$ such that 
  (using Lemma~\ref{lem:osc-aux})
  \begin{align}
    \label{eq:decay-osc-new-10}
    \osc_{p'} A(\nabla u)(x,t)
         &\lesssim \bigg(\dashint_{2B} \abs{A(\nabla u) -
                        \mean{A(\nabla u)}_{2B}}^{p'}\,dy \bigg)^{\frac 1{p'}}.
  \end{align}
  Hence,
  \begin{align*}
    \mathrm{II} &\lesssim R^s \Bigg( \int_{\theta_0R}^{R} \bigg(
                  \frac{\normm{1}_{L^\rho(B)}}{t^s}
                  \bigg)^q \frac{dt}{t} \Bigg)^{\frac 1q}\,  \bigg(\dashint_{2B} \abs{A(\nabla u) -
                  \mean{A(\nabla u)}_{2B}}^{p'}\,dy \bigg)^{\frac
                  1{p'}}
    \\
                &\lesssim \bigg(\dashint_{2B} \abs{A(\nabla u) -
                  \mean{A(\nabla u)}_{2B}}^{p'}\,dy \bigg)^{\frac 1{p'}}
  \end{align*}
  which together with~\eqref{eq:decay-osc-new-3} proves the claim for the Besov scale with $q<\infty$. The proof for $q=\infty$ follows by
  straightforward modifications. Finally, also the statement for the
  Triebel-Lizorkin scale is shown completely analogously with
  $\normm{\cdot}_{L^\rho(B)}$ and $\int \cdot\, \frac{dt}{t}$ changing
  places.

  So far we have shown our claim in the case of Banach spaces,
  i.e., if $\rho,q \geq 1$. Let us now explain the changes for the general quasi-Banach regime $0<\min\{\rho,q\}<1$. In this case additional constants might appear in
  the application of the (quasi-) triangle inequalities for the (quasi-) norms
  $\normm{\cdot}_\rho$ and $(\int |\cdot|^q \frac{dt}{t})^{\frac
    1q}$. Hence,~$\theta_0^{\beta-s}$ has to be replaced by
  $c_{\rho,q} \theta_0^{\beta-s}$. Thus, our proof still works
  if~$\theta_0$ is so small that $c_{\rho,q} \theta_0^{\beta-s} <
  1$. Unfortunately, this is not guaranteed by
  Theorem~\ref{thm:osc-estimates}. However, it immediately follows
  from Theorem~\ref{thm:osc-estimates-general} with the help
  of~\eqref{eq:osc-est-gen-2} that for every fixed~$\theta_1 \in (0,1)$
  we have
  \begin{align}
    \osc_{p'} A(\nabla u)(x,\theta_1 t) \leq c\,\theta_1^{\beta}
    \osc_{p'} A(\nabla u)(x,t) + c_{\beta,\theta_1} \osc_{p'} F(x, t).
  \end{align}
  So, overall we obtain the factor
  ~$c\, c_{\rho,q} \theta_1^{\beta-s}$ instead
  of~$\theta_0^{\beta-s}$. For small~$\theta_1$ we can still absorb the terms as in the Banach case. The price to pay is a larger factor in front of the $F$~terms. Anyhow, this proves the general case.
\end{proof}

\subsection{Transfer to~\texorpdfstring{$\nabla u$}{nabla u} and~\texorpdfstring{$V(\nabla u)$}{V(nabla u)}}
\label{ssec:transfer-nabla-u}

In this section we show how to transfer the regularity statements for~$A(\nabla u)$ to~$\nabla u$ and $V(\nabla u)$. 
To this end, for fixed $\alpha>0$ let us define a transformation $T_\alpha$ of arbitrary vectors or matrices~$Q$ by
\begin{align*}
  Q \mapsto T_\alpha(Q) &:=  \abs{Q}^\alpha \frac{Q}{\abs{Q}}.
\end{align*} 
Then under composition $\{T_\alpha \sep \alpha>0\}$ forms a group (with identity $T_1$ and inverse $T_\alpha^{-1}=T_{\frac{1}{\alpha}}$, $\alpha>0$). In particular, for $\alpha,\beta>0$ there holds $T_{\alpha\beta}(Q)=T_\alpha(T_\beta(Q))$ and hence
\begin{align}\label{eq:T-representation}
 \nabla u = T_{\frac{2}{p}}(V(\nabla u))= T_{\frac 1{p-1}}(A(\nabla u)), 
 \quad \text{as well as} \quad
  V(\nabla u) = T_{\frac{p'}{2}}(A(\nabla u)).
\end{align}
In our situation ($p\geq 2$), we have $\frac{2}{p}, \frac{1}{p-1}, \frac{p'}{2} \in (0,1]$. Therefore, the subsequent proposition is of fundamental importance to us.
\begin{proposition}
  \label{pro:power}
  Let $B\subseteq \setR^d$ denote some ball and assume $\alpha \in (0,1]$. 
  \begin{enumerate}
  \item If $0<r \leq \infty$, then we have
    \begin{align*}
      \norm{T_\alpha (G)}_{L^{r/\alpha}(B)}
      &=\norm{G}_{L^{r}(B)}^\alpha.
    \end{align*}
  \item If $s$, $\rho$, $q$, and $w$ satisfy the conditions of Lemma~\ref{lem:characterization}, then 
    \begin{align*}
      \abs{T_\alpha(G)}_{\bfB^{\alpha s}_{\rho/\alpha,q/\alpha}(B)}
      \lesssim
      \abs{G}_{\bfB^s_{\rho,q}(B)}^\alpha.
    \end{align*}
    Moreover, the same is true when the $\bfB$ spaces are replaced by $\bfF$ spaces.
  \end{enumerate}
\end{proposition}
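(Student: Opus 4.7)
The cornerstone is the elementary pointwise inequality
\begin{align*}
  \abs{T_\alpha(P) - T_\alpha(Q)} \lesssim_\alpha \abs{P-Q}^\alpha \qquad (\alpha \in (0,1]),
\end{align*}
which expresses that $Q \mapsto T_\alpha(Q)$ is globally $\alpha$-H\"older continuous on $\setR^d$ (or on the space of matrices). This is a standard fact about radial power maps: by $\alpha$-homogeneity of $T_\alpha$ the inequality reduces to compact arguments, after which one checks the singular case $Q\to 0$ by hand. Granted this, part (1) is trivial because $\abs{T_\alpha(G)(x)} = \abs{G(x)}^\alpha$, so changing variables in the $L^r$--integral gives directly $\normm{T_\alpha(G)}_{L^{r/\alpha}(B)} = \normm{G}_{L^r(B)}^\alpha$ for $0<r<\infty$, and the case $r=\infty$ is the same identity for the sup~norm.

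For part (2) the strategy is to pass through the oscillation characterization from Lemma~\ref{lem:characterization}. First I would fix the integrability parameter $w\geq 1$ admissible for~$G$ in that lemma and set $w' := w/\alpha \geq 1$, which is an admissible parameter for~$T_\alpha(G)$: indeed, the condition $d(1/\rho - 1/w)_+ < s$ on $G$ is equivalent to the condition $d\bigl(\alpha/\rho - \alpha/w\bigr)_+ < \alpha s$ needed on~$T_\alpha(G)$ in the scale $\bfB^{\alpha s}_{\rho/\alpha,q/\alpha}(B)$ (and similarly for the $\bfF$-scale). Using Lemma~\ref{lem:osc-aux} to replace the mean value by a convenient constant, combined with the pointwise H\"older bound above, I would establish the key oscillation transfer
\begin{align*}
  \osc^B_{w'} T_\alpha(G)(x,t)
  \lesssim \left(\dashint_{B_t(x)\cap B} \abs{G - \mean{G}_{B_t(x)\cap B}}^{w'\alpha}\,dy\right)^{1/w'}
  = \bigl(\osc^B_w G(x,t)\bigr)^\alpha,
\end{align*}
where the identity uses $w'\alpha = w$.

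Plugging this pointwise bound into the characterization from Lemma~\ref{lem:characterization}\ref{itm:characterization-B} with exponent $w'$ and then applying the $L^\rho$ scaling identity from part~(1) to the non-negative function $\osc^B_w G(\cdot,t)$ yields
\begin{align*}
  \abs{T_\alpha(G)}_{\bfB^{\alpha s}_{\rho/\alpha, q/\alpha}(B)}
  &\lesssim R^{\alpha s}\left(\int_0^R \left(\frac{\normm{(\osc^B_w G(\cdot,t))^\alpha}_{L^{\rho/\alpha}(B)}}{t^{\alpha s}}\right)^{q/\alpha} \frac{dt}{t}\right)^{\alpha/q}\\
  &= R^{\alpha s}\left(\int_0^R \left(\frac{\normm{\osc^B_w G(\cdot,t)}_{L^{\rho}(B)}^\alpha}{t^{\alpha s}}\right)^{q/\alpha} \frac{dt}{t}\right)^{\alpha/q}
  = \abs{G}_{\bfB^s_{\rho,q}(B)}^\alpha,
\end{align*}
where in the last step the exponents collapse exactly by a simple arithmetic check. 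For the $\bfF$-scale one proceeds identically, only exchanging the roles of $\normm{\cdot}_{L^{\rho/\alpha}(B)}$ and $\bigl(\int |\cdot|^{q/\alpha}\frac{dt}{t}\bigr)^{\alpha/q}$ and invoking the scaling identity of part~(1) pointwise in $x \in B$.

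\textbf{Main obstacle.} The only genuinely non-routine point is the pointwise H\"older estimate on $T_\alpha$; once it is in place, everything else is bookkeeping with the oscillation norms and the arithmetic $\rho\cdot \tfrac{q/\alpha}{\rho/\alpha} = q$, $\alpha s \cdot \tfrac{q}{\alpha} = sq$, $\tfrac{\alpha}{q}\cdot q = \alpha$ that makes all exponents match exactly (as opposed to up to a multiplicative constant). A secondary concern is verifying compatibility of the admissibility condition on $w'=w/\alpha$ in the characterization lemma, but as noted above this reduces verbatim to the hypothesis already imposed on $G$.
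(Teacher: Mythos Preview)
Your proposal is correct and follows essentially the same route as the paper: both arguments reduce part~(b) to the pointwise H\"older bound $\abs{T_\alpha(P)-T_\alpha(Q)}\lesssim\abs{P-Q}^\alpha$, feed this into an oscillation transfer inequality (your key display is exactly the paper's Lemma~\ref{lem:powers-aux}), and then push it through the characterization of Lemma~\ref{lem:characterization} with the rescaled parameters $(\alpha s,\rho/\alpha,q/\alpha,w/\alpha)$, checking that the exponents collapse. The only cosmetic difference is that the paper derives the H\"older bound by observing $T_\alpha=A$ for $p:=\alpha+1\in(1,2]$ and invoking Lemma~\ref{lem:hammer}, whereas you sketch a direct homogeneity/compactness argument; either works.
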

We note in passing that Proposition~\ref{pro:power}(a) also holds for the respective scaling invariant norms. Thus, Proposition~\ref{pro:power}(b) can be used to show that $G \in \bfB^s_{\rho,q}(B)$ implies $T_\alpha(G) \in \bfB^{\alpha s}_{\rho/\alpha,q/\alpha}(B)$ and likewise for the $\bfF$-case. In fact, also the stated bounds remain true if the (quasi-) semi norms are replaced by the corresponding full (quasi-) norms. A similar statement for the scalar case is contained in~\cite[Section 5.4]{RunSic96}. 
However, the vectorial setting is different. Therefore, below we will present a general but quite simple proof based on the representation in Lemma~\ref{lem:characterization}. Before we get to this proof, we need
an auxiliary lemma on oscillations.
\begin{lemma}
  \label{lem:powers-aux}
  Let $\alpha \in (0,1]$ and $1\leq w <\infty$. Then for all balls $B$ and $G \in
  L^w(B)$, there holds
  \begin{align*}
    \bigg(\dashint_B \abs{T_\alpha (G)-\mean{T_\alpha
    (G)}_B}^w\,dx\bigg)^{\frac 1w}
    &\lesssim 
      \bigg(\dashint_B \abs{G-\mean{G}_B}^{\alpha w}\,dx\bigg)^{\frac 1w}. 
  \end{align*}
\end{lemma}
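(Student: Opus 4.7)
The plan is to reduce everything to a single pointwise fact: for $\alpha\in(0,1]$ the map $T_\alpha$ is globally $\alpha$-H\"older continuous, i.e.\ for all vectors (or matrices) $P,Q$ there holds
\[
|T_\alpha(P) - T_\alpha(Q)| \leq C_\alpha\, |P-Q|^\alpha.
\]
This is a classical fact about the map $Q\mapsto|Q|^{\alpha-1}Q$ when $\alpha\le1$ (morally the inverse side of $V$-type estimates). I would first record this inequality; a short proof proceeds by writing $P=|P|\,p$, $Q=|P|\,r\,q$ with unit vectors $p,q$ and $r=|Q|/|P|\in[0,1]$ (WLOG $|Q|\le|P|$), so that the desired bound reduces to $|p-r^\alpha q|\lesssim |p-rq|^\alpha$ on the unit sphere times $[0,1]$; this last inequality follows from the subadditivity $(a+b)^\alpha\le a^\alpha+b^\alpha$ ($\alpha\le1$) together with the trivial fact that both sides are bounded.

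Granted the pointwise H\"older estimate, the lemma itself is two lines. First, by Lemma~\ref{lem:osc-aux} applied to $T_\alpha(G)$ with the admissible constant $T_\alpha(\mean{G}_B)$,
\[
\bigg(\dashint_B |T_\alpha(G)-\mean{T_\alpha(G)}_B|^w\,dx\bigg)^{1/w}
\le 2\bigg(\dashint_B |T_\alpha(G)-T_\alpha(\mean{G}_B)|^w\,dx\bigg)^{1/w}.
\]
Second, the pointwise H\"older estimate gives
\[
|T_\alpha(G(x))-T_\alpha(\mean{G}_B)|^w \lesssim |G(x)-\mean{G}_B|^{\alpha w}
\]
for a.e.\ $x\in B$, and integrating over $B$ yields exactly the right-hand side of the lemma.

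The main (indeed the only) obstacle is the pointwise H\"older continuity of $T_\alpha$; in the scalar case it is immediate from $|\,|x|^\alpha-|y|^\alpha|\le|x-y|^\alpha$ and a sign check, while in the vectorial setting one has to handle the mild singularity at the origin, which is routine and well known from $p$-Laplace theory.
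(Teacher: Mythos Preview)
Your proposal is correct and follows essentially the same route as the paper: establish the pointwise $\alpha$-H\"older continuity of $T_\alpha$, then combine it with Lemma~\ref{lem:osc-aux}. The only cosmetic differences are that the paper obtains the H\"older estimate by identifying $T_\alpha$ with the map $A$ for the redefined exponent $p:=\alpha+1\in(1,2]$ and invoking Lemma~\ref{lem:hammer} (which gives $|T_\alpha(P)-T_\alpha(Q)|\eqsim(|Q|+|P-Q|)^{\alpha-1}|P-Q|\le|P-Q|^\alpha$), and that it passes through the infimum over all constants together with the bijectivity of $T_\alpha$, whereas you plug in the single constant $T_\alpha(\mean{G}_B)$ directly.
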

\begin{proof}
  Recall that $T_\alpha(Q) = \abs{Q}^{\alpha} \frac{Q}{\abs{Q}}$. Thus
  $T_\alpha(Q)=A(Q)$ if we redefine our exponent~$p$ just for this
  proof (!) as~$p:= \alpha+1 \in (1,2]$. Then it follows from
  Lemma~\ref{lem:hammer} (with this $p$) that for
  all~$P,Q$
  \begin{align}
    \label{eq:powers-aux1}
      \abs{T_\alpha(P) - T_\alpha(Q)} 
      &= \abs{A(P) - A(Q)} \nonumber\\
      &\eqsim \phi_{\abs{Q}}'(\abs{P-Q})
      \nonumber\\
      &\eqsim (\abs{Q} + \abs{P-Q})^{\alpha-1} \abs{P-Q} 
      \nonumber\\
      &\leq \abs{P-Q}^\alpha.
  \end{align}
  Using Lemma~\ref{lem:osc-aux} and the bijectivity of
  $T_\alpha$, and~\eqref{eq:powers-aux1} we estimate
  \begin{align*}
    \bigg(\dashint_B \abs{T_\alpha (G)-\mean{T_\alpha
    (G)}_B}^w\,dx\bigg)^{\frac 1w}
    &\eqsim \inf_{H_0} \bigg(\dashint_B \abs{T_\alpha (G)- H_0}^w\,dx\bigg)^{\frac 1w}
    \\
    &= \inf_{G_0} \bigg(\dashint_B \abs{T_\alpha (G)-T_\alpha
      (G_0)}^w\,dx\bigg)^{\frac 1w}
    \\
    &\lesssim \inf_{G_0} \bigg(\dashint_B \abs{G-G_0}^{\alpha w}
      \,dx\bigg)^{\frac 1w}
      \\
    &\eqsim \bigg(\dashint_B \abs{G-\mean{G}_B}^{\alpha
      w} \,dx\bigg)^{\frac 1w},
  \end{align*}
  where the infimum is taken over all constants~$G_0$, resp.\ $H_0$.
\end{proof}
\begin{remark}
  \label{rem:powers-aux}
  Note that in Lemma~\ref{lem:powers-aux} it is possible to
  replace~$B$ by $B \cap B_t(x)$ for each~$x \in B$ and all $t \in
  (0,R]$, where~$R$ denotes the radius of~$B$.
\end{remark}
We are now prepared to prove Proposition~\ref{pro:power}.
\begin{proof}[Proof of Proposition~\ref{pro:power}]
  The formula
  $\norm{T_\alpha (G)}_{L^{r/\alpha}(B)} =
  \norm{G}_{L^{r}(B)}^\alpha$ is obvious for all $r\in(0,\infty]$. 
  So, let us now show part (b), i.e.,
  $\abs{T_\alpha(G)}_{\bfB^{\alpha s}_{\rho/\alpha,q/\alpha}(B)}
  \lesssim \abs{G}_{\bfB^s_{\rho,q}(B)}^\alpha$.
  For this purpose, we will use the characterization of
  Lemma~\ref{lem:characterization}.  
  It follows from Lemma~\ref{lem:powers-aux} and
  Remark~\ref{rem:powers-aux} that
  \begin{align}
    \label{eq:oscTalpha}
    \osc_{w/\alpha}^B(T_\alpha (G))(x,t)
    &\lesssim
      \big( \osc_{w}^B G(x,t) \big)^\alpha.
  \end{align}
  Moreover, $s$, $\rho$, $q$, $w$ replaced by
  $\alpha s$, $\rho/\alpha$, $q/\alpha$, $w/\alpha$ also satisfy
  the conditions of Lemma~\ref{lem:characterization}.  Thus, we can
  calculate
  \begin{align*}
    \abs{T_\alpha (G)}_{\bfB^{\alpha s}_{\rho/\alpha,q/\alpha}(B)}
	&=
      R^s\Bigg( \int_0^R \bigg(
      \frac{\normm{\osc_{w/\alpha}^B T_\alpha (G)(\cdot,t)
      }_{L^{\rho/\alpha}(B)}}{t^{\alpha s}}
      \bigg)^{\frac q \alpha} \frac{dt}{t} \Bigg)^{\frac \alpha q}
    \\
    &\lesssim
      R^s\Bigg( \int_0^R \bigg(
      \frac{\normm{ \big(\osc_{w}^B G(\cdot,t)
      \big)^\alpha }_{L^{\rho/\alpha}(B)}}{t^{\alpha s}}
      \bigg)^{\frac q \alpha} \frac{dt}{t} \Bigg)^{\frac \alpha q}
    \\
    &=    
      R^s\Bigg( \int_0^R \bigg(
      \frac{\normm{\osc_{w}^B G(\cdot,t)
      }_{L^{\rho}(B)}}{t^{s}}
      \bigg)^q \frac{dt}{t} \Bigg)^{\frac \alpha q}
    \\
    &=
      \abs{G}_{\bfB^{s}_{\rho,q}(B)}^\alpha.
  \end{align*}
  This proves the $\bfB^s_{\rho,q}$-estimate (with the obvious
  modifications for~$q=\infty$).  The $\bfF^s_{\rho,q}$-case is shown
  analogously.
\end{proof}

We can now combine Theorem~\ref{thm:reg-transfer} and
Proposition~\ref{pro:power} with the representations~\eqref{eq:T-representation} to conclude new regularity results for~$\nabla u$ and~$V(\nabla u)$.

\begin{corollary}
  \label{cor:reg-transfer-u}
  Under the assumptions of Theorem~\ref{thm:reg-transfer} there holds
  \begin{align*}
    \norm{\nabla u}_{L^{\rho(p-1)}(B)}^{p-1} 
    &=\norm{V(\nabla u)}_{L^{2 \rho/ p'}(B)}^{\frac{2}{p'}}
      = 
      \norm{A(\nabla u)}_{L^\rho(B)}
    \\
    \intertext{and}
     \abs{\nabla u}_{\bfB^{\frac{s}{p-1}}_{\rho (p-1),q(p-1)}(B)}^{p-1}
     &\lesssim 
        \abs{V(\nabla u)}_{\bfB^{\frac{p's}{2}}_{\frac{2\rho}{p'},\frac{2q}{p'}}(B)}^{\frac{2}{p'}} 
     \\
     &\lesssim
        \abs{A(\nabla u)}_{\bfB^s_{\rho,q}(B)}
     \\
    &\lesssim
      \abs{F}_{\bfB^s_{\rho,q}(2B)} + \bigg(
      \dashint_{2B} \abs{A(\nabla u)
      - \mean{A(\nabla u)}_{2B}}^{p'}\,dx \bigg)^{\frac{1}{p'}}. 
  \end{align*}
  Under the same additional assumptions as in
  Theorem~\ref{thm:reg-transfer} the latter estimates remain true in the scale of
  Triebel-Lizorkin spaces.
\end{corollary}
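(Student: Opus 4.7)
The plan is to reduce everything to Theorem~\ref{thm:reg-transfer} via the group of power transformations $\{T_\alpha\}$ introduced above, applied with three specific exponents. First I would write the three identities
\begin{align*}
  \nabla u = T_{\frac{1}{p-1}}\!\big(A(\nabla u)\big),
  \qquad
  V(\nabla u) = T_{\frac{p'}{2}}\!\big(A(\nabla u)\big),
  \qquad
  \nabla u = T_{\frac{2}{p}}\!\big(V(\nabla u)\big),
\end{align*}
which are immediate from $A(Q)=|Q|^{p-2}Q$ and $V(Q)=|Q|^{\frac{p-2}{2}}Q$ together with $T_{\alpha\beta}=T_\alpha\circ T_\beta$. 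Since $p\ge 2$, the three exponents $\tfrac{1}{p-1}$, $\tfrac{p'}{2}$ and $\tfrac{2}{p}$ all lie in $(0,1]$, so Proposition~\ref{pro:power} is applicable in every case.

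Next I would read off the $L^\rho$-identities. Applying Proposition~\ref{pro:power}(a) with $G=A(\nabla u)$ and $\alpha=\tfrac{1}{p-1}$ gives $\|\nabla u\|_{L^{\rho(p-1)}(B)}=\|A(\nabla u)\|_{L^\rho(B)}^{1/(p-1)}$, i.e.\ $\|\nabla u\|_{L^{\rho(p-1)}(B)}^{p-1}=\|A(\nabla u)\|_{L^\rho(B)}$. The same proposition with $G=A(\nabla u)$ and $\alpha=\tfrac{p'}{2}$ yields $\|V(\nabla u)\|_{L^{2\rho/p'}(B)}^{2/p'}=\|A(\nabla u)\|_{L^\rho(B)}$, which establishes the first line of the corollary.

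For the (semi-)norm inequalities I would apply Proposition~\ref{pro:power}(b) in an analogous way: with $G=V(\nabla u)$ and $\alpha=\tfrac{2}{p}$ we obtain
\begin{align*}
  \abs{\nabla u}_{\bfB^{s/(p-1)}_{\rho(p-1),q(p-1)}(B)}
  \lesssim
  \abs{V(\nabla u)}_{\bfB^{p's/2}_{2\rho/p',2q/p'}(B)}^{2/p},
\end{align*}
after verifying that $\tfrac{2}{p}\cdot\tfrac{p's}{2}=\tfrac{s}{p-1}$, $\tfrac{p}{2}\cdot\tfrac{2\rho}{p'}=(p-1)\rho$, $\tfrac{p}{2}\cdot\tfrac{2q}{p'}=(p-1)q$. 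Similarly, with $G=A(\nabla u)$ and $\alpha=\tfrac{p'}{2}$,
\begin{align*}
  \abs{V(\nabla u)}_{\bfB^{p's/2}_{2\rho/p',2q/p'}(B)}
  \lesssim
  \abs{A(\nabla u)}_{\bfB^s_{\rho,q}(B)}^{p'/2}.
\end{align*}
Raising the first estimate to the power $p-1$ and the second to $\tfrac{2}{p'}$ (and using $\tfrac{2(p-1)}{p}=\tfrac{2}{p'}$) chains them into the required bound
$\abs{\nabla u}_{\bfB^{s/(p-1)}_{\rho(p-1),q(p-1)}(B)}^{p-1} \lesssim \abs{V(\nabla u)}_{\bfB^{p's/2}_{2\rho/p',2q/p'}(B)}^{2/p'} \lesssim \abs{A(\nabla u)}_{\bfB^s_{\rho,q}(B)}$.
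The final inequality in the corollary is then just Theorem~\ref{thm:reg-transfer} applied to $\abs{A(\nabla u)}_{\bfB^s_{\rho,q}(B)}$. The Triebel--Lizorkin version is obtained by the same chain, now invoking the $\bfF$-statement of Proposition~\ref{pro:power}(b) and the corresponding half of Theorem~\ref{thm:reg-transfer}.

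The only genuine thing to check is the bookkeeping of parameters: one must verify that the admissibility hypotheses (the embedding condition in Lemma~\ref{lem:characterization} underlying Proposition~\ref{pro:power}(b)) propagate correctly through each rescaling $(s,\rho,q,w)\mapsto(\alpha s,\rho/\alpha,q/\alpha,w/\alpha)$; this is automatic since multiplying the inequality $d(1/\rho-1/w)_+<s<1$ by $\alpha\in(0,1]$ preserves its form. I expect no serious obstacle here — the work is concentrated in Proposition~\ref{pro:power} and Theorem~\ref{thm:reg-transfer}, both of which have already been established.
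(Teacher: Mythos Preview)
Your proof is correct and follows exactly the approach the paper intends: the corollary is stated immediately after the sentence ``We can now combine Theorem~\ref{thm:reg-transfer} and Proposition~\ref{pro:power} with the representations~\eqref{eq:T-representation}\ldots'', and you carry out precisely this combination, including the parameter bookkeeping that the paper leaves implicit.
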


\begin{remark}
  \label{rem:compare-CGP}
  Let us compare Corollary~\ref{cor:reg-transfer-u} to the results from~\cite{CloGioPas17}.  They
  prove for $p\geq 2$, $d \geq 2$, $s \in (0,1)$, and
  $1 \leq q \leq \frac{2 d}{d-2s}$ that locally
  \begin{align*}
    F \in \bfB^s_{2,q} \qquad \text{implies} \qquad V(\nabla u) \in
    \bfB^{\frac{p' s}{2}}_{2,\frac{2q}{p'}}.
  \end{align*}
  Our result applied to the same
  situation ($\rho=2$ in dimension $d=2$) yields that 
  \begin{align*}
    F \in \bfB^s_{2,q} \qquad \text{implies} \qquad V(\nabla u) \in
    \bfB^{\frac{p' s}{2}}_{\frac{4}{p'},\frac{2q}{p'}}.
  \end{align*}
  In particular, the integrability of~$V(\nabla u)$ is increased from
  $2$ to $\frac{4}{p'}$ and we need no restrictions on the fine index~$q$.
\end{remark}

\begin{remark}
  \label{rem:compare-DDHSW}
  Let us compare our results also to the findings in~\cite{DahDieHarSchWei16}. They
  have studied the Besov regularity of~$u$ measured in the $L^p$
  adaptivity scale, i.e., those~$\bfB^\sigma_{\tau,\tau}$ with
  $\sigma-\frac{d}{\tau} = -\frac{d}{p}$. For~$p \geq 2$, Lipschitz
  domains in $d=2$, and $f:=-\mathrm{div} F \in L^\infty$ they show
  that globally $u \in \bfB^\sigma_{\tau,\tau}$ for
  all~$\sigma \in (0,p')$. However, for~$f \in L^\rho$ with
  $\rho \in (2p,\infty)$ they obtain the condition
  $\sigma < 1+\frac{1-2/\rho}{p-1}$. So their upper bound
  for~$\sigma$ depends on~$\rho$. The reason for this is that the
  lower integrability of~$f$ induces a smaller exponent
  $\alpha_\rho^* = \frac{1-2/\rho}{p-1}$ of local H\"older
  continuity for~$\nabla u$. Hence, with the techniques
  from~\cite{DahDieHarSchWei16} it is only possible to treat
  differentiabilities up to~$s < \alpha_\rho^*$. Moreover, for
  $f \in L^\rho$ with $\rho \in (2,2p]$ they
  are restricted to~$\sigma < 1 + \frac 1p$ which is again non optimal.  However,
  our interior oscillations estimates do not require the use of
  H\"older spaces.  Thus, this unnatural bound does not appear. 
  To see this, let us now assume that~$f \in L^\rho$ with $\rho \geq p'$
  and~$d=2$. Using the \Bogovskii{} operator we find
  $F \in \bfF^1_{\rho,2}$ with $\divergence F =f$.  
  So $F \in \bfB^s_{p',q}$ for any $s < 1$ and $q\in (0,\infty]$ such
  that from Corollary~\ref{cor:reg-transfer-u} it follows that locally
  in the interior there holds
  $u \in \bfB^{1+\frac{s}{p-1}}_{p,q}$. Note that independently of $\rho$ this yields $u \in \bfB^\sigma_{\tau,\tau}$ for
  all~$\sigma \in (0,p')$ which improves~\cite{DahDieHarSchWei16}.
\end{remark}


\begin{thebibliography}{10}

\bibitem{AraTeiUrb17}
D.~J. Ara\'ujo, E.~V. Teixeira, and J.~M. Urbano.
\newblock A proof of the {$C^{p'}$}-regularity conjecture in the plane.
\newblock {\em Adv.\ Math.}, 316:541--553, 2017.

\bibitem{Aro88}
G.~Aronsson.
\newblock On certain {$p$}-harmonic functions in the plane.
\newblock {\em Manuscripta Math.}, 61(1):79--101, 1988.

\bibitem{AveKuuMin18}
B.~Avelin, T.~Kuusi, and G.~Mingione.
\newblock Nonlinear {C}alder{\'o}n–{Z}ygmund theory in the limiting case.
\newblock {\em Arch.\ Rational Mech.\ Anal.}, 214(2):663–714, 2018.

\bibitem{BaeKov05}
A.~Baernstein~II and L.~V. Kovalev.
\newblock On {H}\"older regularity for elliptic equations of non-divergence
  type in the plane.
\newblock {\em Ann.\ Sc.\ Norm.\ Super.\ Pisa Cl.\ Sci.\ (5)}, 4(2):295--317,
  2005.

\bibitem{BreCiaDieKuuSch17b}
D.~Breit, A.~Cianchi, L.~Diening, T.~Kuusi, and S.~Schwarzacher.
\newblock The {$p$}-{L}aplace system with right-hand side in divergence form:
  inner and up to the boundary pointwise estimates.
\newblock {\em Nonlinear Anal.}, 153:200--212, 2017.

\bibitem{BreCiaDieKuuSch17}
D.~Breit, A.~Cianchi, L.~Diening, T.~Kuusi, and S.~Schwarzacher.
\newblock Pointwise {C}alderón–{Z}ygmund gradient estimates for the
  {$p$}-laplace system.
\newblock {\em J.\ Math.\ Pures Appl.}, 2017.

\bibitem{CiaMaz18}
A.~Cianchi and V.~G. Maz'ya.
\newblock Second-order two-sided estimates in nonlinear elliptic problems.
\newblock {\em Arch.\ Rational Mech.\ Anal.}, 229(2):569--599, 2018.

\bibitem{CloGioPas17}
A.~Clop, R.~Giova, and A.~Passarelli~di Napoli.
\newblock Besov regularity for solutions of $p$-harmonic equations.
\newblock {\em Adv.\ Nonlinear Anal.}, 2017.
\newblock To appear.

\bibitem{CohDahDeV01}
A.~Cohen, W.~Dahmen, and R.~A. DeVore.
\newblock Adaptive wavelet methods for elliptic operator equations --
  {C}onvergence rates.
\newblock {\em Math.\ Comp.}, 70(233):27--75, 2001.

\bibitem{DahDieHarSchWei16}
S.~Dahlke, L.~Diening, C.~Hartmann, B.~Scharf, and M.~Weimar.
\newblock Besov regularity of solutions to the $p$-{P}oisson equation.
\newblock {\em Nonlinear Anal.}, 130:298 -- 329, 2016.

\bibitem{DeV98}
R.~A. DeVore.
\newblock Nonlinear approximation.
\newblock {\em Acta Numer.}, 7:51--150, 1998.

\bibitem{DiBMan93}
E.~DiBenedetto and J.~Manfredi.
\newblock On the higher integrability of the gradient of weak solutions of
  certain degenerate elliptic systems.
\newblock {\em Amer.\ J.\ Math.}, 115(5):1107--1134, 1993.

\bibitem{DieEtt08}
L.~Diening and F.~Ettwein.
\newblock Fractional estimates for non-differentiable elliptic systems with
  general growth.
\newblock {\em Forum Math.}, 20(3):523--556, 2008.

\bibitem{DieForWan17}
L.~Diening, M.~Fornasier, and M.~Wank.
\newblock A relaxed iteration for the $p$-poisson problem.
\newblock {\em ArXiv e-prints}, 2017.
\newblock arXiv:1702.03844.

\bibitem{DieKapSch11}
L.~Diening, P.~Kaplick{\'y}, and S.~Schwarzacher.
\newblock B{MO} estimates for the {$p$}-{L}aplacian.
\newblock {\em Nonlinear Anal.}, 75(2):637--650, 2012.

\bibitem{DieKre08}
L.~Diening and Ch. Kreuzer.
\newblock Linear convergence of an adaptive finite element method for the
  $p$-{L}aplacian equation.
\newblock {\em SIAM J.\ Numer.\ Anal.}, 46:614--638, 2008.

\bibitem{DieLenStrVer12}
L.~Diening, D.~Lengeler, B.~Stroffolini, and A.~Verde.
\newblock Partial regularity for minimizers of quasi-convex functionals with
  general growth.
\newblock {\em SIAM J.\ Math.\ Anal.}, 44(5):3594--3616, 2012.

\bibitem{DieStrVer09}
L.~Diening, B.~Stroffolini, and A.~Verde.
\newblock Everywhere regularity of functionals with {$\varphi$}-growth.
\newblock {\em Manuscripta Math.}, 129(4):449--481, 2009.

\bibitem{Dob83}
M.~Dobrowolski.
\newblock On finite element methods for nonlinear elliptic problems on domains
  with corners.
\newblock In {\em Singularities and constructive methods for their treatment
  ({O}berwolfach, 1983)}, volume 1121 of {\em Lecture Notes in Math.}, pages
  85--103. Springer, Berlin, 1985.

\bibitem{DolM95}
G.~Dolzmann and S.~M{\"u}ller.
\newblock Estimates for {G}reen's matrices of elliptic systems by {$L^p$}
  theory.
\newblock {\em Manuscripta Math.}, 88(2):261--273, 1995.

\bibitem{Ham92}
C.~Hamburger.
\newblock Regularity of differential forms minimizing degenerate elliptic
  functionals.
\newblock {\em J.\ Reine Angew.\ Math.}, 431:7--64, 1992.

\bibitem{HarWei18}
C.~Hartmann and M.~Weimar.
\newblock Besov regularity of solutions to the {$p$}-{P}oisson equation in the
  vicinity of a vertex of a polygonal domain.
\newblock {\em Results Math.}, 73(1):Art. 41, 28, 2018.

\bibitem{IwaMan89}
T.~Iwaniec and J.~Manfredi.
\newblock Regularity of {$p$}-harmonic functions on the plane.
\newblock {\em Rev.\ Mat.\ Iberoamericana}, 5(1-2):1--19, 1989.

\bibitem{KuuMin13}
T.~Kuusi and G.~Mingione.
\newblock Linear potentials in nonlinear potential theory.
\newblock {\em Arch.\ Rational Mech.\ Anal.}, 207(1):215--246, 2013.

\bibitem{RunSic96}
T.~Runst and W.~Sickel.
\newblock {\em Sobolev {S}paces of {F}ractional {O}rder, {N}emytskij
  {O}perators, and {N}onlinear {P}artial {D}ifferential {E}quations}, volume~3
  of {\em De Gruyter series in nonlonear analysis and applications}.
\newblock Walter de Gruyter \& Co., Berlin/New York, 1996.

\bibitem{Tri1989}
H.~Triebel.
\newblock Local approximation spaces.
\newblock {\em Z.\ Anal.\ Anwend.}, 8:261--288, 1989.

\bibitem{Ura68}
N.~N. Ural{\cprime}ceva.
\newblock Degenerate quasilinear elliptic systems.
\newblock {\em Zap.\ Nau\v cn.\ Sem.\ Leningrad.\ Otdel.\ Mat.\ Inst.\
  Steklov.\ (LOMI)}, 7:184--222, 1968.

\end{thebibliography}

\def\cprime{$'$}

\end{document}